\numberwithin{equation}{section}
\newtheorem{theorem}{Theorem}[section]
\newtheorem{lemma}[theorem]{Lemma}
\newtheorem{proposition}[theorem]{Proposition}
\theoremstyle{definition}
\newtheorem{definition}[theorem]{Definition}
\theoremstyle{remark}
\newtheorem{remark}[theorem]{Remark}
\newcommand{\pa}{\partial}
\begin{document}

\title{Global well-posedness of non-integrable hyperbolic-ellptic Ishimori system in the critical Sobolev space}

  \author{Zexian Zhang}
   \address{Shanghai Center for Mathematical Sciences, Fudan University, Shanghai 200433, China}
   \email{23110840019@m.fudan.edu.cn(Z. Zhang)}

  \author{Yi Zhou}
   \address{School of Mathematical Sciences, Fudan University, Shanghai 200433, China}
   \email{yizhou@fudan.edu.cn(Y. Zhou)}


\keywords{}

\begin{abstract}
We consider the Cauchy problem for the hyperbolic-elliptic Ishimori system with general decoupling constant $\kappa \in \mathbb{R}$ and prove global well-posedness in the critical Sobolev space. The proof relies primarily on new bilinear estimates, which are established via a novel div-curl lemma first introduced by the second author in \cite{zhou_1+2dimensional_2022}. Our approach combines the caloric gauge technique with $U^p$-$V^p$ type Strichartz estimates to handle the hyperbolic structure of the equation. The results extend previous work on the integrable case $(\kappa = 1)$ to general $\kappa$ and provide a unified framework which also works for the hyperbolic and elliptic Schrödinger maps in dimensions $d \ge 2$.
\end{abstract}

\maketitle

\tableofcontents

\section{Introduction} \label{sec1}	

We consider the hyperbolic-elliptic Ishimori system, a two-dimensional topological spin field model with the  form
\begin{equation}\label{ishimori}
  \begin{cases} 
    \partial_t S = S \times (\partial_{x_1}^2 S - \partial_{x_2}^2 S) + \kappa (\partial_{x_1} \phi \cdot \partial_{x_1} S + \partial_{x_2} \phi \cdot \partial_{x_2} S), & \text{on } \mathbb{R}^2 \times \mathbb{R}, \\
    \Delta_x \phi = 2 S \cdot (\partial_{x_1} S \times \partial_{x_2} S), \\
    S|_{t=0} = S_0,
  \end{cases}
\end{equation}
where $S: \mathbb{R}^2 \times \mathbb{R} \to \mathbb{S}^2$ is a spin field taking va lues in the unit sphere, $\phi: \mathbb{R}^2 \times \mathbb{R} \to \mathbb{R}$ is a scalar potential, and $\kappa \in \mathbb{R}$ is the coupling constant. The system was introduced by Ishimori in \cite{ishimori_multivortex_1984}  as a two-dimensional generalization of the two dimensional Heisenberg equation in ferromagnetism. The potential $\phi$ is related to the topological charge density $2S \cdot (\partial_{x_1} S \times \partial_{x_2} S)$. The total topological charge, defined by
\begin{equation}
  Q  = \frac{1}{4\pi} \int_{\mathbb{R}^2} S \cdot (\partial_{x_1} S \times \partial_{x_2} S),
\end{equation}
represents the degree of the mapping $S: \mathscr{T} (\cong \mathbb{R}^2 \cup \{\infty\} )\to  \mathscr{T}$.

Assuming that $S(t,x)$ is a solution to the Ishimori system \eqref{ishimori} with initial data $S_0$, then the scaled function $S_\lambda(t,x) = S(\lambda^2 t,\lambda x)$ is also a solution with initial data $S_0(\lambda x)$. A direct scaling computation gives \begin{equation}
  \|S_\lambda(t)\|_{\dot{H}^{s}(\mathbb{R}^d)} = \lambda^{s -\frac{d}{2}}\|S(t)\|_{\dot{H}^s(\mathbb{R}^d)},
\end{equation} 
so that in the two-dimensional case, the critical Sobolev space for $S$ is $\dot{H}^1(\mathbb{R}^2)$.


The local and global regularity properties of the Cauchy problem associated with the hyperbolic-elliptic Ishimori system have been extensively studied, see \cite{kenig_Cauchy_2005,soyeur_Cauchy_1992, wang_Local_2012}. Soyeur \cite{soyeur_Cauchy_1992} established local and global existence for small initial data in $H^3 $, and uniqueness for large data in $H^4 $. Wang \cite{wang_Local_2012} proved local well-posedness for small data in $H^\sigma_Q $  (see \eqref{1.3} below) for $\sigma > \frac{3}{2}$. A broader historical overview may be found in \cite[Sect. 9.2]{linares_Introduction_2015}.

Two special cases are of particular interest. When $\kappa = 1$, the system is completely integrable. In this setting, 
 Bejenaru, Ionescu, and Kenig \cite{bejenaru_stability_2011} established global well-posedness for small data in the critical Sobolev space $\dot{H}^1_Q(\mathbb{R}^2)$. Their analysis hinges on the observation that in the Coulomb gauge the magnetic terms vanish and the nonlinearities become non-derivative. When $\kappa = 0$, the system reduces to the two-dimensional hyperbolic Heisenberg equation, equivalent to the hyperbolic Schrödinger map. Of particular relevance to our approach is the seminal work of Bejenaru, Ionescu, Kenig, and Tataru \cite{bejenaru_global_2011}, who proved global existence for the elliptic Schrödinger map in critical Sobolev spaces for dimensions $d \ge 2$. Their proof relies crucially on the caloric gauge formulation and local smoothing estimates for the associated linear flow.

In this paper, we extend these results to the non-integrable Ishimori system with general $\kappa \in \mathbb{R}$ using the caloric gauge framework of \cite{bejenaru_global_2011}. This extension is highly non-trivial.
Unlike the integrable case $\kappa=1$, the system for general $\kappa$ is non-integrable and its nonlinearities are derivative-dependent, introducing significant analytic obstacles.
Moreover, compared to the elliptic Schr\"odinger map, the hyperbolic nature of the Ishimori equation prevents the direct use of local smoothing estimates, which were essential in \cite{bejenaru_global_2011}.
Instead, we employ $L^4_{t,x}$ Strichartz estimates in $U^p$-$V^p$ spaces (first introduced by Tataru in an unpublished work; see \cite{koch_Dispersive_2014} for a detailed introduction) and a new div-curl lemma first introduced by the second author in \cite{zhou_1+2dimensional_2022}.
This new div-curl lemma plays a crucial role in establishing bilinear estimates, especially for controlling the high-low and low-high frequency interactions in Bony's paraproduct decomposition, while high-high frequency interactions are handled by the $U^p$-$V^p$ type $L^4_{t,x}$ Strichartz estimates. We remark that $L^4_{t,x}$ Strichartz estimates have also played an important role in the study of energy-critical Schr\"odinger maps \cite{2015A}, but the present analysis is distinguished by the hyperbolic nature of the Ishimori system and the reliance on the div-curl lemma rather than local smoothing estimates.
Our methods also work for both ultrahyperbolic and elliptic Schr\"odinger maps in dimensions $d \ge 2$, providing a unified approach.

Before stating our main results, we introduce some notations. For $\sigma \in [0,\infty)$, let $H^\sigma$ denote the usual Sobolev space of complex valued function. 
Given a point $Q \in \mathbb{S}^2$, we define the Sobolev space $H^{\sigma}_Q$ by
\begin{equation}\label{1.3}
H^{\sigma}_Q := \{f:\mathbb{R}^2\to  \mathbb{R}^3\ |\ |f|\equiv 1\ \text{a.e. and}\ f-Q \in H^{\sigma} \},
\end{equation}
which is equipped with the metric $d_{H^\sigma_Q}(f,g) = \|f-g\|_{H^{\sigma}}$.  Similarly for $\dot{H}^{\sigma}_Q$. We define
\begin{equation}
  H_Q^{\infty} = \bigcap_{k=1}^{\infty}H^k_Q.
\end{equation}

Our main result in this paper is the following small data global well-posedness.
\begin{theorem}\label{thm1}
	Given $Q \in \mathbb{S}^2$. Then there exists $\varepsilon >0$ such that for any $S_0 \in H^{\infty}_Q$ with $\|S_0-Q\|_{\dot{H}^1}\le \varepsilon $, there exists a unique global solution $S    \in  C(\mathbb{R};H^{\infty}_Q)$ to the ishimori system \eqref{ishimori} satisfying
	\begin{equation}\label{thm11}
	  \sup_t\|S(t)-Q\|_{\dot{H}^1}\lesssim \|S_0-Q\|_{\dot{H}^1},
	\end{equation}
	and for $k\in \mathbb{Z}_+,$
	\begin{equation}\label{thm12}
	  \sup_t\|S(t)\|_{ H_Q^k}\le C(k,\|S_0\|_{H^k_Q}).
	\end{equation}
Moreover, for any $\sigma \in [0,\sigma_1]$  the operator $T_Q: S_0\mapsto S(t)$ admits a continuous extension 
\begin{equation}
  T_Q: \mathcal{B}^{\sigma}_{\varepsilon(\sigma_1)}\to C(\mathbb{R},H_Q^{\sigma+1}),
\end{equation}
where 
\begin{equation}
   \mathcal{B}^{\sigma}_{\varepsilon(\sigma_1)} := \{f \in H^{\sigma+1}_Q \ |\ \|f-Q\|_{\dot{H}^1}\le  \varepsilon(\sigma_1)\}.
\end{equation}
\end{theorem}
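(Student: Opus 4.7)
The plan is to follow the caloric gauge strategy of Bejenaru, Ionescu, Kenig and Tataru \cite{bejenaru_global_2011}, but adapted to the hyperbolic symbol $\xi_1^2 - \xi_2^2$ and to the derivative nonlinearity $\kappa \nabla \phi \cdot \nabla S$ coming from the Ishimori coupling. First I would extend the initial datum $S_0:\mathbb{R}^2\to\mathbb{S}^2$ into a heat variable $s\ge 0$ by solving the harmonic map heat flow $\partial_s S = \Delta_x S + S|\nabla_x S|^2$; parallel transporting an orthonormal frame at $s=+\infty$ back to $s=0$ produces the caloric frame $\{v,w\}$ on $S^*T\mathbb{S}^2$, characterised by the vanishing of the $\partial_s$-connection coefficient. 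Writing $\psi_k = \langle \partial_{x_k}S, v\rangle + i\langle \partial_{x_k}S, w\rangle$ and letting $A_\alpha$ denote the associated connection one-forms, the Ishimori system \eqref{ishimori} becomes equivalent to a covariant hyperbolic Schr\"odinger system
\begin{equation*}
  (i\partial_t + \partial_{x_1}^2 - \partial_{x_2}^2)\psi_k = \mathcal{N}_k(\psi, A, \phi), \qquad k=1,2,
\end{equation*}
in which the $A_\alpha$ and $\phi$ are reconstructed from $\psi$ by elliptic/parabolic equations that are quadratic in $\psi$.

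Next I would set up the resolution space: a dyadic Littlewood--Paley decomposition of $\psi$, placing each frequency-$2^k$ piece in a $U^2$-type atomic space $F_k$ adapted to the hyperbolic symbol, with a dual $V^2$-type space for the nonlinearity, plus the energy space $L^\infty_tL^2_x$ and an auxiliary low-regularity space carrying $A_\alpha$ and $\phi$. The central estimate to prove is the a priori bound
\begin{equation*}
  \Bigl(\sum_k \|P_k\psi\|_{F_k}^2\Bigr)^{1/2}\lesssim \|\psi(0)\|_{L^2} + \Bigl(\sum_k\|P_k\psi\|_{F_k}^2\Bigr)^{3/2}.
\end{equation*}
After splitting $\mathcal{N}_k$ by Bony's paraproduct, high--low and low--high interactions are controlled by the new div--curl lemma of \cite{zhou_1+2dimensional_2022}, whose integrability gain is precisely what is needed to absorb the derivative in $\partial\phi\cdot\partial S$ and in the magnetic term $A\cdot\partial\psi$; in particular, the right-hand side $2S\cdot(\partial_{x_1}S\times\partial_{x_2}S)$ of the elliptic equation for $\phi$ is a Jacobian-type null form to which the lemma applies directly. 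High--high interactions, which the div--curl lemma does not see, are closed instead by the $U^2$--$V^2$ bilinear Strichartz estimate for the hyperbolic Schr\"odinger operator, proved via a Loomis--Whitney / transversality argument on the saddle characteristic surface $\{\tau = \xi_1^2-\xi_2^2\}$.

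With the bilinear machinery in hand, \eqref{thm11} follows from a bootstrap combined with the subcritical local theory of \cite{wang_Local_2012}: the a priori bound prevents the $F_k$-norm from escaping the small ball on any interval of existence, and local solutions extend globally. The higher-regularity estimate \eqref{thm12} comes from an analogous frequency-envelope argument applied to $\nabla^kS$, proved by induction on $k$. For uniqueness and the continuous extension $T_Q$ at critical regularity $\sigma=0$, I would apply the same bilinear estimates to the difference of two solutions, first matching the two caloric frames by a bounded $\mathbb{S}^1$-rotation as in \cite{bejenaru_global_2011}; the resulting mild frame-loss is recovered by interpolating against the higher-regularity bound.

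The hardest step I anticipate is closing the bilinear estimates for the coupling term $\kappa\nabla\phi\cdot\nabla S$. Unlike the integrable case $\kappa=1$, where the Coulomb gauge of \cite{bejenaru_stability_2011} eliminates the magnetic nonlinearity altogether, no gauge choice removes the $\phi$-coupling for general $\kappa$; and unlike the hyperbolic Schr\"odinger map $\kappa=0$, local smoothing is unavailable because the characteristic set $\{\xi_1^2=\xi_2^2\}$ is non-elliptic. The whole argument therefore rests on the single observation that the div--curl lemma supplies exactly enough gain to cover both deficits simultaneously, which in the paraproduct regime demands a careful frequency-dependent pairing against the test functions entering the lemma.
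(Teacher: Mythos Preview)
Your proposal follows essentially the same architecture as the paper: caloric gauge to produce the differentiated fields $\psi_m$, a $U^2$/$V^2$ framework adapted to the hyperbolic symbol, the div--curl lemma for the off-diagonal bilinear gain, and a bootstrap to close. Three points of execution differ from the paper, however, and you should adjust accordingly.

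First, the div--curl lemma is not applied to the Jacobian null form $S\cdot(\partial_1 S\times\partial_2 S)$ directly, as your ``in particular'' suggests. Instead the paper feeds into the lemma the \emph{mass and momentum conservation laws} of the linear hyperbolic Schr\"odinger equation (Proposition~\ref{bilinear1}); this produces a bilinear $L^2_{t,x}$ estimate for \emph{any} pair $P_{k_1}u\cdot P_{k_2}v$ with separated frequencies, measured by the functional $\mathcal{D}(u)=\|u\|_G+\sup_y\|u^y(i\partial_t+\mu_l\partial_l^2)u\|_{L^1_{t,x}}$. All high--low and low--high interactions in $\mathcal{N}_m$ are then reduced to this single estimate. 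Second, for high--high interactions you do not need a Loomis--Whitney or transversality argument: the $L^4_{t,x}$ Strichartz estimate holds for the hyperbolic propagator just as for the elliptic one, so comparable-frequency products go straight into $L^2_{t,x}$ by H\"older (Proposition~\ref{bilinear2}, case $|k_1-k_2|\le 100$). Third, for the continuous extension the paper does not compare two solutions directly with a frame rotation; it takes a one-parameter family $S_0^h$ joining two data via a lemma of Tataru (Lemma~\ref{join}), proves a bound on the \emph{linearized} field $\psi_{\rm lin}=\partial_h\psi^h$ using the same bilinear machinery, and then integrates in $h$ to get the $L^2$ difference estimate, upgrading to $\dot H^1$ by a high-frequency tail argument with uniform frequency envelopes. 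The local theory input is Kenig--Nahmod \cite{kenig_Cauchy_2005}, not \cite{wang_Local_2012}.
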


\subsection{The modified spin model in caloric gauge}
Following the procedure in \cite{bejenaru_global_2011}, we construct the fields $\psi_m$ and the connection coefficients $A_m$, and derive the differentiated Ishimori equation satisfied by these functions. To fix the connection coefficients uniquely, we choose the \textbf{caloric gauge}, which is implemented by solving a heat equation and thus extending the spin field $S$ to include an auxiliary parabolic time variable $s \in [0,\infty)$.

Instead of working directly on the spin field $S$, we study its derivatives $\pa_\alpha S$ for $\alpha = t, 1,2$, which are tangent vectors in $T_{S(x,t)}\mathbb{S}^2$. Now suppose that there exists a  smooth frame $(v,w) = (v,S\times v) \in T_{S(x,t)}\mathbb{S}^2$. We introduce the complexified differentiated variables
\begin{equation}
  \psi_\alpha = v \cdot \pa_\alpha S + i w \cdot \pa_\alpha S,
\end{equation}
and the real connection coefficients 
\begin{equation}
  A_{\alpha} = w \cdot \pa_\alpha v.
\end{equation}

Since the vectors $(S,v, w )$ form a orthonormal frame for $T\mathbb{R}^{3}$, it follows that 
\begin{equation}\label{def}
	\begin{cases}
	 \pa_\alpha S = v \Re (\psi_\alpha) + w \Im (\psi_\alpha) ,\\
     \pa_\alpha v  = -S \Re (\psi_\alpha) +  w A_\alpha, \\
	   \pa_\alpha w = -S \Im (\psi_\alpha) - v A_\alpha.  \end{cases}
\end{equation}

Using the above formulas, one can verify that $\psi_\alpha$ and $A_\alpha$ satisfy the covariant curl relations
\begin{equation}\label{pcurl}
  (\pa_\alpha + iA_\alpha)\psi_\beta =  (\pa_\beta + iA_\beta)\psi_\alpha.
\end{equation}
Introducing the covariant derivative $D_\alpha = \partial_\alpha + i A_\alpha$, we can rewrite this as 
\begin{equation}
  D_\alpha \psi_\beta = D_\beta \psi_\alpha.
\end{equation}

Direct computation shows that 
\begin{align}
  &\pa_\alpha A_\beta - \pa_\beta A_\alpha = \Im(\psi_\alpha \overline{\psi_\beta})=: q_{\alpha\beta},\label{Acurl}\\ &[D_\alpha,D_\beta] = i q_{\alpha \beta}, 
\end{align}
where $q_{\alpha\beta}$ represents the curvature of the connection.

We now express the original Ishimori system \eqref{ishimori} in terms of $\psi_\alpha$ and $A_\alpha$.

For the Poisson equation in \eqref{ishimori}, we compute using \eqref{def}
\begin{align*}
	\Delta_x \phi& = 2S \cdot [(v \Re (\psi_1) + w \Im (\psi_1))\times (v \Re (\psi_2) + w \Im (\psi_2))]\\
	& = 2S \cdot (v\times w)(\Re (\psi_1)\Im (\psi_2) - \Re (\psi_2)\Im (\psi_1))\\
	& = 2\Im(\psi_2\overline{\psi_1})= 2(\pa_2 A_1 - \pa_1 A_2) = -2\epsilon_{ij}\pa_i A_j.
\end{align*}
Here and in what follows  we use the notation $\epsilon_{ij} = \delta_{1i}\delta_{2j}-\delta_{1j}\delta_{2i}$ and  the summation convention over repeated indices. Hence,
\begin{equation}
    \phi = (-\Delta)^{-\frac{1}{2}}\epsilon_{ij} R_i A_j, \qquad \partial_m \phi = \epsilon_{ij}R_m R_i A_j
\end{equation}
with $R_i = (-\Delta)^{-\frac{1}{2}}\partial_i$ denoting the Riesz transforms.

For the evolution equation, substituting \eqref{def} into \eqref{ishimori} gives
\begin{align*}
	\pa_t S &= S \times [v(\pa_1\Re \psi_1 - A_1\Im \psi_1) + w(\pa_1\Im \psi_1 + A_1\Re \psi_1)]\\
	&\ \ \ - S \times [v(\pa_2\Re \psi_2 - A_2\Im \psi_2) + w(\pa_2\Im \psi_2 + A_2\Re \psi_2)]\\
	&\ \ \ + \kappa[v \Re (\psi_1) + w \Im (\psi_1)]\pa_1 \phi+\kappa[v \Re (\psi_2) + w \Im (\psi_2)]\pa_2 \phi\\
	& = v (-\pa_1\Im \psi_1 - A_1\Re \psi_1 + \pa_2\Im \psi_2 + A_2\Re \psi_2 + \kappa\Re (\psi_1)\pa_1 \phi+\kappa\Re (\psi_2)\pa_2 \phi) \\
	&\ \ \ + w(\pa_1\Re \psi_1 - A_1\Im \psi_1 -\pa_2\Re \psi_2+  A_2\Im \psi_2 + \kappa\Im (\psi_1) \pa_1 \phi +\kappa\Im (\psi_2) \pa_2 \phi).
\end{align*}
Using $\psi_t = v \cdot \partial_t S + i w \cdot \partial_t S$, we obtain the compact expression
\begin{equation}\label{psit}
  \psi_t =  v \cdot \pa_t S + i w \cdot \pa_t S = i(D_1 \psi_1-D_2 \psi_2) + \kappa\psi_l \pa_l \phi,
\end{equation}
which expresses the time derivative of the spin field in terms of the spatial derivatives and connection coefficients.

Applying the compatibility condition \eqref{pcurl}, we derive the evolution equations for $\psi_m$ ($m=1,2$) as 
\begin{align}
	i D_t \psi_m & = i D_m \psi_t = - D_m(D_1 \psi_1-D_2 \psi_2) + i\kappa D_m(\psi_l \pa_l \phi)\notag\\
 & = -  D_m(D_1 \psi_1-D_2 \psi_2) + i\kappa (D_m \psi_l \pa_l \phi+ \psi_l \pa_l\pa_m\phi)\notag\\
 & = -(D_1^2 -D_2^2)\psi_m- i(q_{m 1}\psi_1 - q_{m 2}\psi_2) + i \epsilon_{ij}\kappa(D_m\psi_l R_l  R_i A_j + \psi_l \pa_m(R_l  R_i A_j)).
\end{align}
Expanding the covariant derivatives yields the following nonlinear hyperbolic Schrödinger equation
\begin{equation}\label{psi}
	\begin{aligned}
  	 (i\pa_t + \mu_l \pa_l^2 )\psi_m =&-2i \mu_l A_l \pa_l \psi_m +(A_t + \mu_l(A_l^2-i\pa_l A_l))\psi_m - i \mu_l\psi_l\Im (\psi_m\bar{\psi}_{l})  \\
	 & +  i \epsilon_{ij}\kappa(D_m\psi_l R_l  R_i A_j + \psi_l \pa_m(R_l  R_i A_j)),
	\end{aligned}
\end{equation}
where $\mu_l = \delta_{1l} - \delta_{2l}$ reflects the hyperbolic signature.

Thus, The system \eqref{pcurl}, \eqref{Acurl}, \eqref{psit}, and \eqref{psi} constitutes the  modified spin model, a formulation of the Ishimori system in terms of the gauge-dependent variables $\psi_\alpha$ and $A_\alpha$.

To obtain a well-posed system, we impose Tao's \textbf{caloric gauge condition}, defined as follows:
\begin{definition}[Caloric gauge]\label{caloric}
	Let $S$ be a solution of \eqref{ishimori} in $C(\mathbb{R};H^{\infty}_Q)$ and $(v_\infty,w_\infty) = (v_\infty,Q\times v_\infty)$ be the orthonormal frame in $T_Q \mathbb{S}^2$. A \textit{caloric gauge} is  a tuple consisting of a extended map $\widetilde{S}: \mathbb{R}_+ \times \mathbb{R}\times \mathbb{R}^2\to \mathbb{S}^2$ and an orthonormal frame $(v,w) = (v,\widetilde{S}\times v)$ for $T_{\widetilde{S}}\mathbb{S}^2$  such that
	\begin{equation}\label{heat}
  \begin{cases}\pa_s \widetilde{S} = \Delta_x \widetilde{S}+  \widetilde{S}\sum_{m =1,2}|\pa_m \widetilde{S}|^2,\ \text{for}\ s\in[0,\infty)    \\ \widetilde{S}(0,t,x) = S(t,x),  \end{cases}
\end{equation}
and the following gauge conditions hold:
\begin{equation}
  A_s := w\cdot \pa_s v = 0,\ \lim_{s \to \infty}(v,w) = (v_\infty,w_\infty).
\end{equation}
\end{definition}

The existence and uniqueness of such a gauge for small initial data are guaranteed by Lemma \ref{heatgauge}.

We now define the extended variables $\psi_{\alpha'}, A_{\alpha'},\ \alpha' = s,t,1,2$ by
\begin{equation}
  \begin{cases} \psi_{\alpha'} = v\cdot \pa_{\alpha'}\widetilde{S} + i w\cdot \pa_{\alpha'}\widetilde{S},  \\ A_{\alpha'} = w\cdot \pa_{\alpha'} w  \end{cases}
\end{equation}
The parallel transport condition in the gauge definition implies the key gauge condition
\begin{equation}\label{As=0}
  A_s = 0.
\end{equation}

A similar computation shows that \eqref{heat} is equivalent to 
\begin{equation}
  \psi_s = D_1 \psi_1 + D_2 \psi_2.
\end{equation}

Taking $D_\alpha$ derivative and using \eqref{pcurl} again,  the heat equations for extended  variables $\psi_\alpha,\ \alpha=t, 1,2,$ take the  form of
\begin{equation}
  (\pa_s-\Delta_x)\psi_\alpha = 2i A_l \pa_l\psi_\alpha- (A_l^2 -i\pa_lA_l) \psi_\alpha + i \Im(\psi_\alpha \overline{\psi_l})\psi_l.
\end{equation}

Moreover, from $A_s = 0$ and \eqref{Acurl} we have
\begin{equation}
  \pa_s A_\alpha = \Im(\psi_s \overline{\psi_\alpha}) = \Im(\overline{\psi_\alpha}D_l \psi_l ),
\end{equation}
which, together with decay estimates  \eqref{gaugedecay} as $s \to \infty$, yields the integral representation
\begin{equation}\label{Aid}
  A_\alpha(s) = -\int_{s}^{+\infty}\Im(\overline{\psi_\alpha}D_l \psi_l )(r)\mathrm{d}r,\ \alpha = t,1,2.
\end{equation}

To establish continuous dependence on initial data, we also require the linearized Ishimori equation. Consider a one-parameter family of solutions $S^h$ to \eqref{ishimori} with $S^{h_0} = S$, and define $S_{\text{lin}} = \partial_h S^h|_{h_0}$. Linearizing \eqref{ishimori} gives
\begin{equation}	\pa_t S_{\text{lin}} = S_{\text{lin}}   \times  \mu_l \pa^2_l  S  + S   \times  \mu_l \pa^2_l  S_{\text{lin}}  +\kappa \pa_x \phi_{\text{lin}} \cdot \pa_x S +\kappa \pa_x \phi \cdot \pa_x S_{\text{lin}},
\end{equation}
with the constraint $S_{\text{lin}} \cdot S = 0$ (since variations preserve the sphere constraint).  The linearized potential $\phi_{\text{lin}}$ satisfies
\begin{equation}
  \begin{aligned}
  \Delta \phi_{\text{lin}}&  = 2[ S_{\text{lin}}\cdot (\partial_{1}S \times \pa_{ 2}S) +S\cdot (\partial_{ 1}S_{\text{lin}} \times \pa_{ 2}S) + S\cdot (\partial_{ _1}S \times \pa_{ 2}S_{\text{lin}}) ] \\
  & =2 S\cdot [\pa_1(S_{\text{lin}}\times \pa_2 S) -  \pa_2(S_{\text{lin}}\times \pa_1 S)]  \\
  & = 2\pa_1 [S\cdot (S_{\text{lin}}\times \pa_2 S)] - 2 \pa_2 [S\cdot (S_{\text{lin}}\times \pa_1 S)].
\end{aligned}
\end{equation}

Decomposing $S_{\text{lin}}$ in the frame as  
\begin{equation}
  S_{\text{lin}} = v \Re(\psi_{lin}) + w \Im(\psi_{lin}) ,
\end{equation}
and repeating the earlier computations yields the linearized equation for $\psi_{\text{lin}}$ 
\begin{equation}\label{linearized}
  \begin{aligned}
(i\pa_t + \mu_l \pa_l^2)\psi_{\text{lin}} = &-2i \mu_l A_l \pa_l \psi_{\text{lin}} +(A_t + \mu_l(A_l^2-i\pa_l A_l))\psi_{\text{lin}} - i \mu_l\psi_l\Im (\psi_{\text{lin}}\bar{\psi}_{l})\\
&~ + i \kappa \epsilon_{ij}R_l R_i A_j \cdot D_l \psi_{\text{lin}}-i\kappa\psi_l R_l[R_1\Im(\overline{\psi_2} \psi_{\text{lin}}) - R_2\Im(\overline{\psi_1} \psi_{\text{lin}})].
  \end{aligned}
\end{equation}
\subsection{Outline of the proof}
We adopt the analytical framework introduced in \cite{bejenaru_global_2011} to establish our main results. Given a solution $S \in C(\mathbb{R}; H^\infty_Q)$ to the Ishimori system, our main goal is to prove \textit{a priori} bound on
$\|S\|_{L^\infty_t (\dot{H}^{1}\cap H^{\sigma + 1}_Q)}$ for $\sigma$ in a fixed interval $ [0,\sigma_1]$. We shall use the homogenous Littlewood-Paley decomposition and the notation of frequency envelopes.

\begin{definition}
	For  $k \in \mathbb{Z}$,  we define the standard  homogenous Littlewood-Paley operator $P_k$. Let $\chi$ be the smooth cutoff to the region $[-1,1]$, and $P_k,P_{\le k}$ are defined by
\begin{equation}
  \widehat{P_{\le k}f}(\xi) := \chi(2^{-k}|\xi|)\hat{f}(\xi),\  P_{k} = P_{\le k} - P_{\le\frac{k}{2}}.
\end{equation}
\end{definition}

\begin{definition}
	A positive squence $\{b_k\}$ is a frequency envelope if is $\ell^2 $ bounded 
	\begin{equation}
	  \sum_{k\in \mathbb{Z}}b_k^2 <\infty,
	\end{equation}
	and slowly varying, 
\begin{equation}
  b_{k}\le   2^{\delta|k-j|}b_j,\ k,j \in \mathbb{Z},
\end{equation}
where $\delta$ is a sufficiently small positive parameter.

An $\epsilon$-frequency envelope satisfies additional condition
\begin{equation}
  \sum_{k}b_k^2 \le  \epsilon^2
\end{equation}
\end{definition}

Given a $\ell^2$ positive sequence $\alpha_k$, we often define its related frequency envelope
\begin{equation}
  \alpha'_k  = \sup_{j}2^{-\delta|j-k|}\alpha_j.
\end{equation}

It's clear that $\alpha'_k$ is indeed a frequency envelope satisfying
\begin{equation}
  \alpha_k \le  \alpha'_k,\ \sum_{k} (\alpha'_k)^2 \lesssim \sum_{k} \alpha_k^2.
\end{equation}

Let  $S(s): = S(s,x,t)$ be the solution  to the heat flow in the caloric gauge with initial data $S   = S(x,t) \in C(\mathbb{R},H^{\infty}_Q)$, for $\sigma\ge 1$  we introduce the frequency envelope related to $S$:
\begin{align}
	&\gamma_k(\sigma) = \sup_{j\in \mathbb{Z}}2^{-\delta|k-j|}2^{(\sigma + 1) j}  \|P_{j}S(0)\|_{L_t^\infty L^2_x},\ \sigma \in  [0,\sigma_1 + 1].
\end{align} 
we also let $\gamma_k : = \gamma_k(0)$. The existence of caloric gauge is ensured by the following lemma:
\begin{lemma} {\rm(\cite[Prop. 4.2]{bejenaru_global_2011})}\label{heatgauge}Given  arbitary interval $I \subseteq \mathbb{R}$ and $S \in  C(I;H^{\infty}_Q)$ satisfying the smallness condition 
	\begin{equation}
	  \sum_{k \in \mathbb{Z}} 2^{2k}\|P_j S\|^2_{L^\infty_t L^2_x} = \sum_{k \in \mathbb{Z}}
	  \gamma_k^2 \ll 1,
	\end{equation} 
then  there exists a unique corresponding caloric gauge as defined
in Definition \ref{caloric}. Moreover, we have the bounds
\begin{equation}
  \|P_k(\widetilde{S},v,w)(s)\|_{L^\infty_t L^2_x}\lesssim \gamma_k(\sigma) \langle 2^{2k}s \rangle^{-20} 2^{-(\sigma + 1) k},\ \sigma \in [0,\sigma_1 ],
\end{equation}
and for $\sigma \in \mathbb{Z}_+$,
\begin{equation}\label{vwb}
  \sup_{k\in \mathbb{Z}}\sup_{s\in [0,\infty)}\langle s \rangle^{\frac{\sigma}{2}}2^{\sigma k}\| P_k   (\widetilde{S},v,w)(s)\|_{L^\infty_t L^2_x}<\infty.
\end{equation}
As a conclusion we have 
\begin{equation}\label{gaugedecay}
	\begin{aligned}
	& \sup_{k \in \mathbb{Z}}\sup_{s \in [0,\infty)}\langle s \rangle^{\frac{\sigma}{2}}2^{(\sigma-1)k}\|P_k(\psi_m(s),A_m(s))\|_{L^\infty_t L^2_x}<\infty,\ \text{for}\ m=1,2, \\
	&\sup_{k \in \mathbb{Z}}\sup_{s \in [0,\infty)}\langle s \rangle^{\frac{\sigma}{2}}2^{ \sigma k}\|P_k(\psi_t(s),A_t(s))\|_{L^\infty_t L^2_x}<\infty.
	\end{aligned}
\end{equation}
\end{lemma}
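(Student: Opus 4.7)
The plan is to construct the caloric gauge in two stages: first solve the harmonic map heat flow \eqref{heat} to obtain the extended map $\widetilde{S}(s,t,x)$; then use the gauge condition $A_s = 0$ together with the asymptotic condition $(v,w)\to (v_\infty,w_\infty)$ as $s\to\infty$ to determine the moving frame uniquely along each parabolic trajectory. Uniqueness of the pair $(\widetilde{S},v)$ will then follow from the uniqueness of the parabolic initial-value problem at $s=0$ and of the transport ODE.

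For the first stage, I would exploit that \eqref{heat} is the harmonic map heat flow into $\mathbb{S}^2$, which for initial data with $\sum_k \gamma_k^2 \ll 1$ admits a unique global smooth solution. To obtain the frequency-localized bounds, one writes the equation schematically as $(\partial_s - \Delta_x)\widetilde{S} = \widetilde{S}|\nabla\widetilde{S}|^2$ and runs a bootstrap argument for
\begin{equation*}
  M_k(\sigma;s) := \sup_{0 \le s' \le s}\langle 2^{2k}s' \rangle^{20}\, 2^{(\sigma+1)k}\,\|P_k\widetilde{S}(s')\|_{L^\infty_t L^2_x}
\end{equation*}
against the envelope $\gamma_k(\sigma)$. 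The linear heat propagator supplies the polynomial factor $\langle 2^{2k}s\rangle^{-N}$ at the free level, and a Duhamel argument combined with a Bony paraproduct decomposition of the cubic nonlinearity closes the bootstrap under the smallness assumption. The higher-regularity bounds \eqref{vwb} follow by commuting additional spatial derivatives through the flow and using the parabolic smoothing factor $s^{-\sigma/2}$ produced by the heat kernel.

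For the second stage, the conditions $A_s = w\cdot \partial_s v = 0$, $|v|\equiv 1$, and $v\cdot \widetilde{S} = 0$ pin down $\partial_s v$ up to its $\widetilde{S}$-component, giving the first-order ODE
\begin{equation*}
  \partial_s v = -(v\cdot \partial_s \widetilde{S})\,\widetilde{S},\qquad v(\infty,t,x) = v_\infty.
\end{equation*}
Since $\partial_s \widetilde{S}$ decays rapidly in $s$ as a consequence of the first-stage bounds, this ODE can be integrated backward from $s=\infty$, and linear ODE estimates transfer the frequency-localized bounds from $\widetilde{S}$ to $v$ and to $w = \widetilde{S}\times v$. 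The estimates \eqref{gaugedecay} for the complexified fields $\psi_\alpha = (v+iw)\cdot \partial_\alpha \widetilde{S}$ and the connection coefficients $A_\alpha = w\cdot \partial_\alpha v$ then follow by product estimates applied to these definitions; the scaling $2^{(\sigma-1)k}$ for $\alpha = 1,2$ reflects the extra spatial derivative in $\partial_\alpha \widetilde{S}$, while the scaling $2^{\sigma k}$ for $\alpha = t$ is obtained by plugging in \eqref{psit} together with the Poisson representation of $\partial_l\phi$ in terms of $A$.

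The main obstacle is the criticality of the heat flow at the $\dot{H}^1$ level: the nonlinearity $\widetilde{S}|\nabla\widetilde{S}|^2$ is cubic, derivative-containing and scale-invariant, so closing the bootstrap requires carefully extracting parabolic smoothing from the heat kernel in the high-frequency outputs and using the slow variation of $\gamma_k$ to handle the high-low and low-high paraproducts, while the high-high interactions must be summed using the square-function envelope bound. This is exactly the mechanism systematized in the caloric-gauge construction of \cite{bejenaru_global_2011}, and since the heat flow \eqref{heat} itself is insensitive to the hyperbolic signature of the underlying Ishimori evolution, the same scheme transfers to our setting without modification.
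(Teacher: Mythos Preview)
The paper does not prove this lemma; it is quoted directly from \cite[Prop.~4.2]{bejenaru_global_2011}, and no independent argument is given here. Your two-stage outline (global heat flow for $\widetilde{S}$ via a bootstrap on frequency envelopes, followed by backward integration of the parallel-transport ODE for $v$ from $s=\infty$) is exactly the construction carried out in that reference, and your observation that the heat flow is insensitive to the hyperbolic signature explains why the result transfers unchanged to the present setting.
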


We now state our main bootstrap arguments. Given arbitary interval $I\subseteq \mathbb{R}$ and solution $S \in C(I;H^{\infty}_Q)$ satisfying the smallness condition in Lemma \ref{heatgauge},  we shall work with the caloric gauge and  the associated fields and connection coefficients $\psi_\alpha, A_\alpha$.  

For simplicity, we adopt the notation
\begin{equation}
  \bm{\psi}= (\psi_m,\overline{\psi_m})_{m=1,2},\ \bm{A} = (A_m)_{m=1,2}.
\end{equation}

Denote by $ G = L^\infty_t L^2_x \cap L^4_{t,x}$ the Strichartz space. In view of the bilinear estimate in Lemma \ref{bilinear}, we introduce the functional
\begin{equation}\label{Dcal}
 \mathcal{D}(u): = \|u\|_{G} + \sup_y\|u^{y}\cdot (i\pa_t + \mu_l \pa_l^2)u\|_{L^1_{t,x}}.
\end{equation}

We now introduce three families of frequency envelopes that will govern our bootstrap analysis. For $\sigma \in [0,\sigma_1]$ and $m=1,2$ we define
\begin{align}
	&a_k(\sigma) = \sup_{j\in \mathbb{Z}}2^{-\delta|k-j|}\sup_{s\ge 0}\langle 2^{2j}s \rangle^4 (2^{\sigma j}  \|P_{j}\bm{\psi}(s)\|_{G} + 1_{\{\sigma\ge \frac{1}{5}\}}2^{(\sigma-1)j} \|P_{j}\psi_t(s)\|_{G}),\label{afre}\\
	&b_k(\sigma) = \sup_{j\in \mathbb{Z}}2^{-\delta|k-j|} ( 2^{\sigma j}  \|P_{j}\bm{\psi}(0)\|_{G} + 1_{\{\sigma\ge \frac{1}{5}\}}2^{(\sigma-1)j} \|P_{j}\psi_t(0)\|_{G}),\label{bfre}\\
	& c_k(\sigma) = \sup_{j \in \mathbb{Z}}2^{-\delta|k-j|}2^{\sigma j}\|P_{j}\nabla S_0\|_{L^2_x}.\label{cfre}
\end{align}
Clearly,
\begin{equation}
  c_k(\sigma)\le b_k(\sigma) \le a_k(\sigma).
\end{equation}
We use the notation
\begin{equation}
  (a_k,b_k, c_k ): = (a_k(0),b_k(0), c_k(0) ) 
 \end{equation}
  to measure the critical regularity. Under the hypotheses of Theorem \ref{thm1}, $c_k$ forms an $\varepsilon$-frequency envelope.

Our proof strategy revolves around two key bootstrap propositions. The first controls the heat flow evolution.

\begin{proposition}[Heat flow bootstrap assumptions]\label{heatBA}
 Let $a_k(\sigma),b_k(\sigma)$ be defined as in \eqref{afre} and \eqref{bfre}. Suppose that  $b_k$ is a $\varepsilon^{\frac{3}{4}}$-frequency envelope and 
  \begin{align}
   &a_k(\sigma) \le  \varepsilon^{-\frac{1}{4}} b_k(\sigma),\ \mathcal{D}(P_k \bm{\psi} )\le  2^{-\sigma k}b_k(\sigma),\ \text{for}\ \sigma \in [0,\sigma_1],\\
     &\|P_k \psi_t\|_{G}\lesssim 2^k \varepsilon^{\frac{1}{2}}.
 \end{align}
Then we have the improved bounds
  \begin{align}
    a_k(\sigma)&\lesssim    b_k(\sigma),\ \text{for}\  \sigma \in [0,\sigma_1]\\
	\|\psi_t(s)\|_G &\lesssim \langle 2^{2k }s\rangle^{-4}\varepsilon^{\frac{1}{2}},  \\
  \sup_y \|P_{k_1}\bm{\psi}(s) P_{k_2}w^y\|_{L^2_{t,x}}&\lesssim 2^{-\frac{|k_1-k_2|}{2}} 2^{-\sigma k} \langle 2^{2k}s \rangle^{-3} b_k(\sigma)\mathcal{D}(P_{k_2}w).
 \end{align}
\end{proposition}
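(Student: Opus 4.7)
The plan is to propagate the frequency-localized bounds from $s=0$ to all $s \ge 0$ via the parabolic Duhamel formula combined with the bilinear estimate of Lemma \ref{bilinear} and the $U^p$-$V^p$ Strichartz theory for the hyperbolic Schrödinger operator $i\pa_t + \mu_l\pa_l^2$. For each $k \in \mathbb{Z}$, I would apply $P_k$ to the heat equation for $\psi_\alpha$ and write
\[ P_k\psi_\alpha(s) = e^{s\Delta}P_k\psi_\alpha(0) + \int_0^s e^{(s-r)\Delta}P_k\mathcal{N}_\alpha(r)\,dr, \]
with $\mathcal{N}_\alpha = 2iA_l\pa_l\psi_\alpha - (A_l^2 - i\pa_l A_l)\psi_\alpha + i\Im(\psi_\alpha\overline{\psi_l})\psi_l$. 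Since $e^{s\Delta}$ acts only in $x$, it commutes with $L^\infty_t$, is bounded on $L^4_{t,x}$, and yields a decay $\lesssim\langle 2^{2k}s\rangle^{-N}$ on frequency-$2^k$ data. Consequently the homogeneous term inherits the desired $\langle 2^{2k}s\rangle^{-4}$ profile directly from $b_k(\sigma)$.

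The crux is the nonlinear bound
\[ \|P_k\mathcal{N}_\alpha(r)\|_G \lesssim 2^{2k}\varepsilon^{1/2}\cdot 2^{-\sigma k}\langle 2^{2k}r\rangle^{-4}b_k(\sigma), \]
after which the Duhamel integration against the parabolic kernel reproduces the target profile with a gained factor $\varepsilon^{1/2}$ that closes the bootstrap. To establish this, I would decompose each product in $\mathcal{N}_\alpha$ by Bony's paraproduct, keeping low-high, high-low, and high-high cases separate. The $L^\infty_t L^2_x$-part of $G$ is controlled by Hölder using the bootstrap hypothesis $a_k(\sigma)\le\varepsilon^{-1/4}b_k(\sigma)$, while the $L^4_{t,x}$-part is controlled by Lemma \ref{bilinear}, whose prefactor $2^{-|k_1-k_2|/2}$ orthogonalizes the paraproduct summation through the slowly-varying property of the envelopes. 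The connection coefficient $A$ is eliminated through the integral representation \eqref{Aid}, which automatically inserts a $\bm\psi$-bilinear structure with extra heat-time decay from the inner integration variable.

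The most delicate contribution is the magnetic derivative $A_l\pa_l\psi_\alpha$: placing $\pa_l$ on the low-frequency factor through paraproduct and invoking \eqref{Aid} once more trades the apparent derivative loss for summable heat decay. With the improved bound $a_k(\sigma)\lesssim b_k(\sigma)$ in hand, the improved bound for $\psi_t$ follows by substituting the identity $\psi_t = i(D_1\psi_1 - D_2\psi_2) + \kappa\psi_l\pa_l\phi$ into the just-improved frequency bounds for $\bm\psi$, $A$, and $\phi = (-\Delta)^{-1/2}\epsilon_{ij}R_i A_j$. For the bilinear conclusion I would apply Lemma \ref{bilinear} with $u = P_{k_1}\bm\psi(s)$ and $v = P_{k_2}w$, then quantify $\mathcal{D}(P_{k_1}\bm\psi(s))$ by the just-improved $G$-bound plus the contribution of $(i\pa_t + \mu_l\pa_l^2)P_{k_1}\bm\psi$ estimated term-by-term via \eqref{psi}. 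The slightly weaker decay exponent $-3$ (versus $-4$ in the single-factor estimate) reflects the square-root loss intrinsic to bilinear Strichartz.

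The main obstacle will be aligning heat-decay weights through the $s$-integration and the paraproduct summation so that no logarithmic loss occurs; this is resolved by extracting the small factor $\varepsilon^{1/2}$ from a single input and allowing the remaining input factors to supply integrable $\langle 2^{2j}r\rangle^{-4}$ decay through $a_j(\sigma)\le\varepsilon^{-1/4}b_j(\sigma)$. Smallness of $\varepsilon$ then absorbs all summed contributions into the implicit constants, producing the three improved bounds simultaneously.
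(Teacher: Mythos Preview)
Your Duhamel outline matches the paper's scaffolding, but two steps rest on identities that are not available where you invoke them.

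The identities \eqref{psit} and \eqref{psi} are consequences of the Ishimori equation and therefore hold only at heat-time $s=0$; for $s>0$ the extended map $\widetilde S(s,\cdot)$ solves the harmonic-map heat flow, not \eqref{ishimori}, so $(i\pa_t+\mu_l\pa_l^2)\psi_m(s)$ has no closed form analogous to \eqref{psi}, and $\psi_t(s)$ is not given by \eqref{psit}. Hence your route to the $\psi_t(s)$ bound (substituting \eqref{psit}) and to the bilinear conclusion (quantifying $\mathcal D(P_{k_1}\bm\psi(s))$ via \eqref{psi}) both break down for $s>0$. The paper handles $\psi_t(s)$ by its own heat Duhamel with a separate bootstrap constant, and obtains the bilinear bound by expanding $P_k\bm\psi(s)$ through the heat Duhamel and applying Proposition~\ref{bilinear1} (the $L^2_{\hat x_m}$ version, not Proposition~\ref{bilinear2}) term by term inside yet another bootstrap in $s$; at no point does it need $(i\pa_t+\mu_l\pa_l^2)\bm\psi(s)$ for $s>0$.

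Second, Proposition~\ref{bilinear2} bounds $\|P_{k_1}u^yP_{k_2}v\|_{L^2_{t,x}}$ for a \emph{product}; it does not yield $L^4_{t,x}$ control of a single factor, so it cannot be used as you describe to estimate the $L^4$-part of $\|P_k\mathcal N_\alpha\|_G$. The paper's product bounds in $G$ (Lemmas~\ref{Bony}, \ref{Bony2} and the resulting estimates \eqref{b1}--\eqref{b7}) are pure H\"older/Bernstein and never invoke the div-curl bilinear lemma. A related omission is that $\bm A(s)$ must be controlled by its own bootstrap (via \eqref{Aid}, with the feedback from the $\bm A\cdot\bm\psi\cdot\bm\psi$ term inside) \emph{before} the $\bm\psi$ Duhamel can close; your one-line appeal to \eqref{Aid} hides this loop. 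The actual nonlinear bound (cf.\ \eqref{Kpsi-bound}) also has a more intricate $s$-profile than your claimed $2^{2k}\varepsilon^{1/2}\langle 2^{2k}r\rangle^{-4}$, with a low-frequency tail $\langle 2^{-(k+j)/2}\rangle$ that must be tracked through the $r$-integration.
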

 
The second bootstrap proposition concerns the hyperbolic Schrödinger evolution.

\begin{proposition}[Hyperbolic Schr\"odinger bootstrap assumptions]\label{SchrodingerBA}
	Let $b_k(\sigma),c_k(\sigma)$ be defined as in \eqref{bfre} and \eqref{cfre}. Suppose that
	\begin{align}
	  &\mathcal{D}(P_{k}\bm{\psi} )\le  b_{k}(\sigma) \le \varepsilon^{-\frac{1}{4}}c_{k}(\sigma), \ \text{for}\ \sigma \in [0,\sigma_1],\\
	  &\|P_k \psi_t \|_{G} \le  2^k \varepsilon^{\frac{1}{2}}.
	\end{align}
Then we have the improved bounds
	\begin{align}
	  &b_k(\sigma) +  \mathcal{D}(P_{k}\bm{\psi} )\lesssim c_{k}(\sigma), \ \text{for}\ \sigma \in [0,\sigma_1],\\
	  & \|P_k \psi_t \|_G \lesssim 2^k \varepsilon.
	\end{align}
\end{proposition}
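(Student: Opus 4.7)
The plan is to frequency-localize equation \eqref{psi}, apply the $U^p$-$V^p$ based Strichartz estimate for the hyperbolic Schr\"odinger operator $i\pa_t + \mu_l\pa_l^2$ to each dyadic block $P_k\psi_m$, and close the bootstrap using the caloric-gauge smallness inherited from $b_k(\sigma)\le \varepsilon^{-1/4}c_k(\sigma)$, so that every quadratic or higher interaction yields a factor of $\varepsilon^{1/2}$ that more than compensates the $\varepsilon^{-1/4}$ slack. Concretely, the $U^2$-$V^2$ theory together with the bilinear dual structure captured by \eqref{Dcal} supplies a master linear inequality of the form
\begin{equation*}
\mathcal{D}(P_k\bm{\psi}) + \|P_k\bm{\psi}\|_G \lesssim \|P_k\bm{\psi}|_{t=0}\|_{L^2_x} + \|P_k N_m\|_{\mathrm{dual}},
\end{equation*}
where $N_m$ denotes the right-hand side of \eqref{psi}. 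By \eqref{def} the Cauchy data at $(t,s)=(0,0)$ are controlled by the frame decomposition of $\nabla S_0$, giving $\|P_k\bm{\psi}|_{t=0}\|_{L^2_x}\lesssim c_k(\sigma)$.

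Next I would analyze each term in $N_m$ via a Bony paraproduct decomposition. The magnetic term $A_l\pa_l\psi_m$ is the most delicate: high-high interactions are handled by placing two factors in $L^4_{t,x}\subset G$ together with Bernstein; the low-high and high-low pieces lose a full derivative under naive H\"older and are instead controlled by invoking the new div-curl lemma of \cite{zhou_1+2dimensional_2022}, which through the curl identity \eqref{Acurl} trades the derivative loss for an integration by parts that produces a bilinear quantity bounded by $\mathcal{D}(P_{k'}\bm{\psi})\cdot\mathcal{D}(P_k\bm{\psi})$. The electric-type terms $A_t\psi_m$, $A_l^2\psi_m$, $\pa_lA_l\cdot\psi_m$ are estimated by combining the representation \eqref{Aid} with the heat-flow envelope bounds supplied by Proposition \ref{heatBA}. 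The resonant cubic $\psi_l\,\Im(\psi_m\bar\psi_l)$ is handled by a direct trilinear Strichartz bound. Finally, the Ishimori-specific nonlocal term $\kappa\epsilon_{ij}(D_m\psi_l\cdot R_lR_iA_j + \psi_l\pa_m(R_lR_iA_j))$ is reduced to the previous cases using the $L^p$-boundedness of Riesz transforms and their commutation (up to acceptable tails) with Littlewood-Paley projections. Summing the dyadic pieces against the slow-varying factors $2^{-\delta|k-j|}$ reproduces the envelope $c_k(\sigma)$ with a gain of at least $\varepsilon^{1/2}$.

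For the $\psi_t$ bound I would use the pointwise identity \eqref{psit} to reduce $\|P_k\psi_t\|_G$ directly to bilinear and trilinear estimates on $D_l\psi_l$ and $\psi_l\pa_l\phi$, both of which are now controlled by the improved bounds on $\bm{\psi}$ and $\bm{A}$; the factor $2^k\varepsilon$ emerges from the derivative hitting $\psi_l$ combined with the quadratic $\varepsilon$-smallness. The main obstacle will be the high-low piece of the magnetic term at the critical Sobolev scaling, which is precisely the difficulty motivating the introduction of the new div-curl lemma and the specific shape of the $\mathcal{D}$-functional in \eqref{Dcal}: one needs a tight matching of constants so that the output envelope is exactly $c_k(\sigma)$ rather than merely $\varepsilon^{-1/4}c_k(\sigma)$, and any looseness in the high-low bilinear step would prevent the bootstrap from closing.
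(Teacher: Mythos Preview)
Your overall architecture is correct and matches the paper: frequency-localize, apply the $U^2$-$V^2$ Strichartz machinery \eqref{Stri}, control $\|P_k\mathcal{N}_m\|_{DV^2_{\text{UH}}}$ by duality against $U^2$ atoms, bound the interaction term $I(P_k\bm{\psi},P_k\mathcal{N}_m)$ directly, and recover $\psi_t$ from \eqref{psit}. The paper does exactly this in Section~4.

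However, you have misidentified the mechanism by which the div-curl lemma handles the magnetic term $A_l\pa_l\psi_m$. The curl identity \eqref{Acurl} is \emph{not} the input to Lemma~\ref{lemdiv-curl}; that lemma needs a pair of space\nobreakdash-\emph{time} conservation laws of the form $\pa_t f^{i1}\pm\pa_x f^{i2}=G^i$, whereas \eqref{Acurl} is purely spatial. In the paper the div-curl lemma is applied only once, in Proposition~\ref{bilinear1}, to the \emph{mass} and \emph{momentum} conservation laws of the linear hyperbolic Schr\"odinger flow; this is what produces the bilinear $L^2_{t,x}$ estimate of Proposition~\ref{bilinear2}, which is already built into the functional $\mathcal{D}$. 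There is no further direct use of div-curl at the level of the nonlinearity.

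The derivative loss in the low-high piece of $A_l\pa_l\psi_m$ is instead absorbed through the caloric representation \eqref{Aid}: one writes $\bm{A}=-\int_0^\infty \Im(\overline{\bm{\psi}}\,D_l\psi_l)(r)\,dr$, so that $P_kw^y\cdot P_k(A_l\pa_l\psi_m)$ becomes a four-linear expression $\int_0^\infty P_kw^y\,P_{k_1}\bm{\psi}(r)\,P_{k_2}D_x\bm{\psi}(r)\,P_{k_3}\bm{\psi}\,dr$. The paper then pairs these factors using the heat-flow-propagated bilinear bounds \eqref{psis-main}--\eqref{psis-main-2} (themselves proved from Proposition~\ref{bilinear2}), and the $r$-integration supplies exactly the missing factor $2^{-2\min(k_1,k_2)}$ that cancels the derivative (see the computation leading to \eqref{DApsi-1}). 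Your sketch skips this step, and without it the low-high estimate does not close: a direct appeal to \eqref{Acurl} gives no handle on the time variable and cannot interface with Lemma~\ref{lemdiv-curl}. You need to route the argument through the $s$-integral and Proposition~\ref{heatBA} (specifically the bilinear output \eqref{psis-main-2}), not through \eqref{Acurl}.
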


Proposition \ref{heatBA} is proved in Section 3, while Proposition \ref{SchrodingerBA} is proved in Section 4. Following the strategy in \cite{bejenaru_global_2011}, we show that Proposition \ref{SchrodingerBA} implies Theorem \ref{thm1}. 
\begin{proof}[Proof of Theorem 1.1]

 Our starting point is the local local-in-time existence and uniqueness of ishimori equation given by Kenig and Nahmod \cite{kenig_Cauchy_2005}: if $S_0\in H_Q^\infty$, then
there exists  $T = T(\|S_0\|_{H^{2}_Q})$  and a unique solution $S\in C((-T,T),H^\infty_Q)$ of the Cauchy problem \eqref{ishimori}. Our goal is using Proposition  \ref{SchrodingerBA} to prove the bound 
\begin{align}
  &\|P_k \pa_x \phi\|_{L^\infty_t L^2_x}\lesssim 2^{-\sigma k}c_k(\sigma),\ \sigma \in [0,\sigma_1]. \label{thmb1}\\
  &\sum_{k}\|P_k(S-Q)\|_{L^\infty_t L^2_{x}}^2\lesssim \|S_0-Q\|^2_{L^2_x}.\label{thmb2}
\end{align}
Once these bounds are established, then we can extend the above local solution to a unique global solution satisfying \eqref{thm11} and \eqref{thm12} in Theorem \ref{thm1} via standard continuity argument.

\textbf{(a) Proof of  \eqref{thmb1} .}  Define the quantity
\begin{equation}
  \Psi(T') = \sup_k \sup_{\sigma \in [0,\sigma_1]} c_k(\sigma)^{-1} 2^{\sigma k}\|P_k ( \pa_x \phi,\bm{\psi})\|_{L^\infty([-T',T'];L^2_x)}.
\end{equation}

We claim that 
\begin{equation}\label{PsiBA}
 \text{if}\ \Psi(T')\le  \varepsilon^{-\frac{1}{4}},\ \text{then}\ \Psi(T')\lesssim 1.
\end{equation}
Under such assumptions, we have  by \eqref{vwb} that
\begin{equation}
  \|P_k (v,w )\|_{L^\infty([-T',T'];L^2_x)}\lesssim 2^{-(\sigma+1)k}\varepsilon^{-\frac{1}{4}}c_k(\sigma),\ \sigma\in [0,\sigma_1],
\end{equation}
and by Proposition  \ref{SchrodingerBA} that
\begin{equation}
  \|P_k \bm{\psi}\|_{L^\infty([-T',T'];L^2_x)}\lesssim 2^{- \sigma  k} c_k(\sigma),\ \sigma\in [0,\sigma_1].
\end{equation}
Applying the Bony calculus and using the relation  $\pa_m \phi = v \Re(\psi_m) + w\Im(\psi_m)$, we estimate
\begin{align*}
	\|P_k \pa_x \phi\|_{L^2_x}&\lesssim \sum_{|k_2-k|\le 4}\|P_{\le k-5 }(v,w) P_{k_2}\bm{\psi}\|_{L^2_x}+ \sum_{|k_1-k|\le 4}\|P_{k_1}(v,w) P_{\le k-5 }\bm{\psi}\|_{L^2_x}\\
   &\quad + \sum^{|k_1-k_2|\le 8}_{k_1,k_2\ge  k-4}\|P_{k}(P_{k_1}(v,w)P_{k_2}\bm{\psi})\|_{L^2_x}\\
   &\lesssim \|P_{\le k-5 }(v,w)\|_{L^\infty_x} \sum_{|k_2-k|\le 4}\| P_{k_2}\bm{\psi}\|_{L^2_x} + \sum_{k_2\le  k-5}^{|k_1-k|\le 4}2^{k_2}\|P_{k_1}(v,w)\|_{L^2_x}\|P_{k_2}\bm{\psi}\|_{L^2_x}\\
   &\quad + \sum^{|k_1-k_2|\le 8}_{k_1,k_2\ge  k-4}2^k\|P_{k_1}(v,w)\|_{L^2_x}\|P_{k_2}\bm{\psi}\|_{L^2_x}\\
   &\lesssim 2^{-\sigma k}c_k(\sigma) + 2^{-(\sigma+1)k}\varepsilon^{-\frac{1}{4}}c_k(\sigma)\sum_{k_2\le k-5}2^{k_2}c_{k_2} + \sum_{k'\ge k-10}2^k 2^{-(\sigma + 1)k'}\varepsilon^{-\frac{1}{4}}c_{k'}(\sigma)\cdot c_{k'}\\
   &\lesssim 2^{-\sigma k }c_{k}(\sigma),
\end{align*}
which yields \eqref{PsiBA}. The sums are controlled using the slow varing property of $c_k$ and $c_k(\sigma)$.

Using the local existence results and taking $\sigma_1 = 2$ in Proposition  \ref{SchrodingerBA}, it's clear that there exists $T>0   $ such that $\Psi(T)\lesssim 1$. A standard continuity argument based on   \eqref{PsiBA} shows that $\Psi(T')\lesssim 1$ for arbitarily large $T'$, completing the proof of \eqref{thmb1}.

\textbf{(b) Proof of \eqref{thmb2}.}  We consider the following bounds for linearized equation.
\begin{proposition}
	Given $S$ be a solution to \eqref{ishimori} with small initial data $S_0$ satisfying $\|S_0\|_{\dot{H}^1}\ll 1 $. Under the caloric gauge related to $S$, for each initial data $\psi_{\text{lin},0}\in H^\infty$ there exists a unique solution $\psi_{\text{lin}}\in C(\mathbb{R};H^\infty_Q)$ to \eqref{linearized} satisfying 
	\begin{equation}\label{linearbound}
	  \sum_{k\in \mathbb{Z}}\|P_k \psi_{\rm{lin}}\|^2_{L^2_x}\lesssim \|\psi_{\rm{lin},0}\|^2_{L^2_x}.
	\end{equation}
\end{proposition}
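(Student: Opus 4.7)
The plan is to apply to the linearized equation \eqref{linearized} the same analytical machinery used to close the nonlinear bootstrap in Proposition \ref{SchrodingerBA}, exploiting the crucial fact that \eqref{linearized} is \emph{linear} in $\psi_{\text{lin}}$ with coefficients built from $\psi_\alpha$ and $A_\alpha$, whose frequency-localized bounds are already provided by Proposition \ref{SchrodingerBA} together with Lemma \ref{heatgauge} and the integral formula \eqref{Aid}. Since the underlying solution $S$ obeys $\|S_0\|_{\dot H^1}\ll 1$, the coefficient envelopes $c_k(\sigma)$ are $\varepsilon$-small, and this smallness should absorb all nonlinear contributions in a bootstrap argument.

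First I would introduce a frequency envelope for $\psi_{\text{lin}}$, modelled on \eqref{bfre}. Let $\delta_k := \sup_{j\in\mathbb{Z}} 2^{-\delta|k-j|}\|P_j \psi_{\text{lin},0}\|_{L^2_x}$ be the data envelope, and let $d_k$ measure $\mathcal{D}(P_k \psi_{\text{lin}})$ as in \eqref{Dcal}. The target bootstrap reads: \emph{if} $d_k \le \varepsilon^{-1/4}\delta_k$, \emph{then} $d_k \lesssim \delta_k$. Once this is closed, extracting the $L^\infty_t L^2_x$ piece of $G$ from $\mathcal{D}$ and summing over $k$ using the slow-varying property of $\delta_k$ yields \eqref{linearbound}. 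Local-in-time existence and uniqueness in $H^\infty$ follow from standard linear theory applied to \eqref{linearized} with smooth bounded coefficients, so the entire content is in propagating the critical-$L^2$ bound globally.

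Next I would frequency-localize \eqref{linearized} and write the Duhamel representation of $P_k\psi_{\text{lin}}$ in the $U^p$–$V^p$ framework, then dispatch each term on the right-hand side via Bony's paraproduct decomposition. For the magnetic transport $A_l\partial_l\psi_{\text{lin}}$ and the Riesz-nonlocal Ishimori term $\kappa\epsilon_{ij}R_l R_i A_j\cdot D_l\psi_{\text{lin}}$, the low-high and high-low interactions are handled by the div-curl bilinear estimate of Lemma \ref{bilinear}; the required curl structure on $A_l$ (and on $R_l R_i A_j$) is supplied by the integral representation \eqref{Aid}, which writes $A_l$ as an $s$-integral of a divergence-type quadratic expression in $\psi_\alpha$. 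High-high interactions, both here and in the cubic terms $\mu_l \psi_l \Im(\psi_{\text{lin}}\bar\psi_l)$ and $\psi_l R_l[R_i\Im(\bar\psi_j\psi_{\text{lin}})]$, are controlled by $L^4_{t,x}$ Strichartz bilinear bounds inherited from the $U^p$–$V^p$ machinery. The scalar-potential term $(A_t + \mu_l(A_l^2 - i\partial_l A_l))\psi_{\text{lin}}$ is quadratic or cubic in small coefficients and absorbed by Hölder plus Strichartz. In every case the coefficients supply two copies of $c_k(\sigma)$, one of which yields the smallness factor needed to beat the bootstrap constant $\varepsilon^{-1/4}$.

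The hard part will be the low-high magnetic interaction $P_{<k}A_l\cdot \partial_l P_k\psi_{\text{lin}}$: the derivative lands on the high-frequency factor, so a naive Hölder estimate loses a full power of $2^k$ that cannot be recovered from the coefficient envelope alone. Recovering this budget is precisely what the new div-curl Lemma \ref{bilinear} is designed for, and the $\kappa$-term compounds this difficulty because the Riesz transforms $R_lR_iA_j$ mix frequency scales nonlocally; one must recast these through \eqref{Aid} into a form to which the div-curl mechanism still applies. A subsidiary but important point is that $\psi_{\text{lin}}$ does \emph{not} satisfy the compatibility identity \eqref{pcurl}, so the div-curl structure can only be invoked on the $A$-side, not on the $\psi_{\text{lin}}$-side; this forces a slightly asymmetric application of Lemma \ref{bilinear} relative to its use in Proposition \ref{SchrodingerBA}. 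Once the envelope bound is closed, a standard continuity argument globalizes the local solution and delivers \eqref{linearbound}.
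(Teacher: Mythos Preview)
Your proposal is correct and takes essentially the same approach as the paper, which simply asserts that the proof is identical to that of Proposition \ref{SchrodingerBA} because the nonlinearities in \eqref{linearized} have the same structure as those in \eqref{psi}. Your concern about an ``asymmetric'' application of the bilinear estimate is slightly overblown: in the paper's Section~4 the div-curl/bilinear machinery is already applied with the high-frequency factor being a generic test function $w$ (or $\psi_m$ itself via $\mathcal{D}$), so replacing $\psi_m$ by $\psi_{\text{lin}}$ introduces no new difficulty---the compatibility relation \eqref{pcurl} is never invoked on that factor.
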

The proof of this result is identical to the proof of Proposition \ref{SchrodingerBA}, since the nonlinearities in \eqref{linearized} have similar structure to those in \eqref{psi}.

We also need the following lemma from Tataru \cite{tataru_Rough_2005}.

\begin{lemma}{\rm(\cite[Prop. 3.13]{tataru_Rough_2005})}\label{join}
	Given $S_0^h \in H^\infty_Q$ with $\|S^h_0\|_{\dot{H}^1}\ll 1$ for $h = 0,1$. There exists exists a smooth one-parameter family $ S^h_0 \in C^\infty_h([0,1];H^\infty_Q)$ such that 
	\begin{align}
		& \|S^h_0\|_{\dot{H}^1}\ll 1,\ \forall h \in [0,1], \\
		& \int_{0}^1  \|\pa_h S^h_0\|_{L^2}\approx \|S^0_0 - S^1_0\|_{L^2_x}.
	\end{align}  
\end{lemma}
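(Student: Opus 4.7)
Following Tataru \cite{tataru_Rough_2005}, my plan is to construct the family by linear interpolation in the ambient space $\mathbb{R}^3$ followed by the nearest-point projection. Let $\Pi : U \to \mathbb{S}^2$ denote the smooth nearest-point retraction from the tubular neighborhood $U = \{y \in \mathbb{R}^3 : \tfrac12 < |y| < \tfrac32\}$, where $\Pi(y) = y/|y|$. Set $\tilde S_0^h := (1-h) S_0^0 + h S_0^1$ and $S_0^h := \Pi(\tilde S_0^h)$. The elementary identity
\[
|\tilde S_0^h|^2 = 1 - h(1-h)\,|S_0^0 - S_0^1|^2
\]
shows that the ambient interpolant stays uniformly inside $U$ as long as $S_0^0$ and $S_0^1$ are pointwise non-antipodal with a uniform gap; this is the only structural constraint one needs on the endpoints.

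Once non-antipodality is secured, the verification is routine. Because $\Pi$ is smooth and $\Pi(Q) = Q$, smoothness in $h$ and the regularity $\tilde S_0^h - Q \in H^\infty$ transfer through composition, giving $S_0^h \in C^\infty_h([0,1];H^\infty_Q)$ with $|S_0^h| \equiv 1$ automatic. The chain rule $\nabla S_0^h = D\Pi(\tilde S_0^h)\,\nabla \tilde S_0^h$ together with $\|D\Pi\|_{L^\infty(U)} \lesssim 1$ and the triangle inequality yields $\|S_0^h\|_{\dot H^1} \lesssim \|S_0^0\|_{\dot H^1} + \|S_0^1\|_{\dot H^1} \ll 1$. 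For the length comparability, $\partial_h S_0^h = D\Pi(\tilde S_0^h)\,(S_0^1 - S_0^0)$, and since $D\Pi(y)$ restricted to $T_{y/|y|}\mathbb{S}^2$ is the identity with operator norm bounded by $1/|y|$, one has $\|\partial_h S_0^h\|_{L^2} \approx \|S_0^1 - S_0^0\|_{L^2}$ uniformly in $h$. Integrating gives the upper bound on the path length, and the reverse inequality $\|S_0^0 - S_0^1\|_{L^2} \le \int_0^1 \|\partial_h S_0^h\|_{L^2}\,dh$ follows from the fundamental theorem of calculus applied componentwise.

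The main obstacle is precisely the non-antipodality step. In two dimensions $\dot H^1$ does not embed into $L^\infty$, so the hypothesis $\|S_0^j\|_{\dot H^1} \ll 1$ does not by itself force the two endpoints to be uniformly close in $\mathbb{S}^2$, and there is nothing to prevent $S_0^0(x_0) = -S_0^1(x_0)$ at isolated points. To remove this obstruction I would insert two harmonic-map heat-flow bridges $S_0^0 \leadsto \eta_0$ and $\eta_1 \leadsto S_0^1$, where $\eta_j$ is the harmonic map heat flow starting from $S_0^j$ evaluated at a parabolic time $s_\star \gg 1$; by the smallness of $\dot H^1$ and \eqref{heat}, both $\eta_j$ converge to $Q$ in $L^\infty$ as $s_\star \to \infty$, so the middle segment $\eta_0 \to \eta_1$ satisfies the pointwise condition and can be handled by the ambient-projection construction above. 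The delicate calibration is to parameterize the two end segments so that the $L^2$-length along the heat-flow portions is controlled by $\|S_0^0 - S_0^1\|_{L^2_x}$ rather than the (potentially much larger) $\|S_0^0 - Q\|_{L^2_x} + \|S_0^1 - Q\|_{L^2_x}$; this requires choosing $s_\star$ adaptively in terms of $\|S_0^0 - S_0^1\|_{L^2_x}$ and exploiting the $L^2$-contraction of the linearized heat flow, which is the technical heart of Tataru's argument.
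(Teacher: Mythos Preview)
The paper does not supply its own proof of this lemma; it is quoted directly from \cite{tataru_Rough_2005} and used as a black box in the proof of Theorem~\ref{thm1}. So there is no in-paper argument to compare against.

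Your outline is along the right lines, and you correctly isolate antipodality as the only genuine obstruction. But the heat-flow repair in your last paragraph is not quite calibrated correctly. You write that you will take $s_\star$ large enough that each $\eta_j$ is $L^\infty$-close to $Q$; in two dimensions that forces $s_\star$ to be at least of order $\|S_0^j - Q\|_{L^2}^2$, and then each heat-flow bridge has $L^2$-length $\int_0^{s_\star}\|\pa_s \widetilde S^j\|_{L^2}\,ds \lesssim s_\star^{1/2}\|S_0^j\|_{\dot H^1}$, which is controlled by $\|S_0^j - Q\|_{L^2}$ rather than by $\|S_0^0 - S_0^1\|_{L^2}$. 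The correct criterion is only that $\eta_0$ and $\eta_1$ be non-antipodal \emph{to each other}. Parabolic smoothing applied to the \emph{difference} gives $\|\eta_0 - \eta_1\|_{L^\infty} \lesssim s_\star^{-1/2}\|S_0^0 - S_0^1\|_{L^2}$, so already $s_\star \sim \|S_0^0 - S_0^1\|_{L^2}^2$ suffices for the middle projection step, and with this choice the two heat-flow bridges have combined $L^2$-length $\lesssim s_\star^{1/2}\varepsilon \sim \varepsilon\,\|S_0^0 - S_0^1\|_{L^2}$, exactly as needed. Making these heuristics rigorous for the \emph{nonlinear} covariant heat flow (rather than the linear $e^{s\Delta}$) requires the $\dot H^1$-small-data theory for \eqref{heat}, which is available; but you should be explicit that the quantity being smoothed is the difference $\eta_0 - \eta_1$, not each $\eta_j - Q$ separately.
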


Let $(S_0^0,S_0^1) = (S_0,Q)$ in Lemma \ref{join}. Applying the bound \eqref{linearbound} to $\psi_{\text{lin}} = \pa_h S^h$ yields
\begin{equation}
  \sum_{k}\|P_k \pa_h S^h\|^2_{L^\infty_t L^2_x}\lesssim \|\pa_h S^h_0\|^2_{L^2_x}.
\end{equation}
Integrating in $h$ from $0$ to $1$ gives
\begin{equation}
   \sum_{k}\|P_k (S-Q)\|^2_{L^\infty_t L^2_x}\lesssim \|S_0-Q\|_{L^2_x}^2,
\end{equation}
which is \eqref{thmb2}.

\textbf{(c) Proof of continuous extension.} It's sufficient to show that $T_Q$ admits an unique continuous extension 
\begin{equation}
  T_Q : H^1_Q \mapsto C(\mathbb{R};H_Q^1),
\end{equation} 
and the extension to higher Sobolev spaces follows similarly.

Consider a sequence of solution $ S^n $ with initial data  $ S_0^n \in  H^\infty_Q$ and assume that  $(S_0^n)$ converge to $S_0$ in $H_Q^1$.  We shall prove that $(S^n)$ is Cauchy in $C(\mathbb{R};H_Q^1)$. Repeating the arguments used to prove \eqref{thmb2}, we can show that 
\begin{align}
\limsup_{n,m \to \infty}\|S^n-S^m\|_{L^\infty_t L^2_x}\lesssim  \limsup_{n,m \to \infty}\|S_0^n-S_0^m\|_{  L^2_x}=0.\label{thmpf}
\end{align}

Let $\{c_k^n\}$ be the frequency envelopes associated to $S_0^n$. By the convergence of $(S_0^n)$ in $H_Q^1$, these  envelopes also converge in $\ell^2$. Therefore 
\begin{equation}
  \lim_{k \to \infty}\sup_n \sum_{j>k}(c_j^n)^2 = 0.\label{thmpf2}
\end{equation}
Combining this with \eqref{thmb1} gives 
\begin{align}
	\|S^n-S^m\|_{L^\infty_t \dot{H}^1_x}&\lesssim  \|P_{\le  k}(S_0^n-S_0^m)\|_{ \dot{H}^1_x} +  \|P_{>  k}( S_0^n, S_0^m )\|_{ \dot{H}^1_x} \notag\\
	&\lesssim 2^k \|P_{\le  k}(S_0^n-S_0^m)\|_{L^2_x} + \big(\sup_n\sum_{j>k}(c_j^n)^2\big)^{\frac{1}{2}}.
\end{align}
Togther with \eqref{thmpf} we have 
\begin{equation}
  \limsup_{n,m \to \infty}\|S^n-S^m\|_{L^\infty_t \dot{H}^1_x}\lesssim \big(\sup_n\sum_{j>k}(c_j^n)^2\big)^{\frac{1}{2}}.
\end{equation}
Letting $k \to \infty$ and using \eqref{thmpf2}, we obtain\begin{equation}
   \limsup_{n,m \to \infty}\|S^n-S^m\|_{L^\infty_t \dot{H}^1_x}=0,
\end{equation}
which completes the proof since $H^1_Q = \dot{H}^1_x \cap L_Q^2$.
\end{proof}

\section{Preliminaries} 

\subsection{Multilinear expression}
Following Tao \cite{tao_Global_2001}, we introduce a convenient notation for describing multi-linear expressions of product type.

Denote by $u^{y}(x) := u(x-y)$ the translation of $u(x)$. For multilinear operators, let $L$ be the integral form
\begin{equation}
  L(u_1,u_2, \cdots ,u_k)(x)  = \int K(y_1, \cdots ,y_k)u_1^{y_1}(x)\cdots u_k^{y_k}(x)\mathrm{d}y,
\end{equation} 
where $K$ can be integrable kernel or, more general, bounded measure (including product type expression, for example). The kernels may change from line to line, but we require that these kernels has uniformly bounded mass.

This $L$ notation will turn out to useful for expressing matrix coefficients, Littlewood-Paley multipliers $P_k$ etc., whenever these structures are not being exploited. For example, the $L$ notation is invariant under permutation of standard Littlewood-Paley operators
\begin{align*}
 &L(P_k u_1,u_2, \cdots ,u_k) =L( u_1,u_2, \cdots ,u_k),\\
 &P_k L( u_1,u_2, \cdots ,u_k) =L( u_1,u_2, \cdots ,u_k).
\end{align*} 
The same holds for $P_{>k},\ P_{<k}$, etc. Furthermore, this notation also interacts well with the composition of  Littlewood-Paley operators and Riesz type operators
\begin{align}
 L(R_m P_k u_1,u_2, \cdots ,u_k) =L(P_k u_1,u_2, \cdots ,u_k).
\end{align}

Multilinear estimates are not invariant under separate translations for the factors. To obtain similar bounds for  these $L$ notations, we shall allow translations in the multilinear estimates. For example, if we have the bounds with translation invariant norm $\|\cdot\|_X$
$$\sup_{y_\alpha}\| u_1 u_2^{y_2} u_3^{y_3} \|_{X}\le C,$$
then for any trilinear form $L$ with integrable kernel we have 
\begin{align*}
  \|L(u_1,u_2,u_3)\|_{X} &\le\sup_{y_\alpha} \int K(y_1,y_2,y_3)\|u_1^{y_1} u_2^{y_2}u_3^{y_3}\|_{X}\mathrm{d}y \\
  &=\sup_{y_\alpha} \int K(y_1,y_2,y_3)\|u_1 u_2^{y_2}u_3^{y_3}\|_{X}\mathrm{d}y \lesssim C.
\end{align*}

\subsection{Linear estimates}
Let $G = L^4_{t,x} \cap L^\infty_t L^2_x$ denote the standard Strichartz space. To establish Strichartz estimates for solutions to the linear hyperbolic Schrödinger equation
\begin{equation}\label{linear}
    (i\partial_t + \mu_l \partial_l^2)u = N, \qquad \mu_l = \delta_{1l} - \delta_{2l},
\end{equation}
we follow the approach in \cite{ifrim_global_2025} and employ adapted $U^p$ and $V^p$ spaces. These spaces provide a flexible framework for controlling the dispersive properties of the solution operator $e^{it(\partial_1^2 - \partial_2^2)}$.

\begin{definition}
	Let $1\le p <\infty$, then $U^p_{\text{UH}}$ is an atomic space whose atoms are piecewise solutions to the linear equation, i.e.
	\begin{equation}
	  u = \sum_{k}1_{[t_k,t_{k+1})}e^{it(\pa_1^2 - \pa_2^2)}u_k,\ \sum_{k}\|u_k\|_{L^2}^p = 1.
	\end{equation}

    And we equip the $U^p_{\text{UH}}$ with the norm
    \begin{equation}
      \|u\|_{U^p_{\text{UH}}} = \inf\{\sum_{\lambda}|c_\lambda|\ |\ u = \sum_{\lambda}c_\lambda u_\lambda,\ u_\lambda\ \text{are}\ U^p_{\text{UH}}\  \text{atoms}\}.
    \end{equation}

The $V^p_{\text{UH}}$ is the space of right continuous functions $v\in L^\infty_t L_x^2$ such that
     \begin{equation}
   \|v\|^p_{V^p_{\text{UH}}} = \|v\|^p_{L^\infty_t L_x^2} + \sup_{\{t_k\}\nearrow}\sum_k\|e^{-it_k(\pa_1^2-\pa_2^2)}v(t_k)-e^{-it_{k+1}(\pa_1^2-\pa_2^2)}v(t_{k+1}) \|_{L^2},
    \end{equation}
where the supremum is taken over increasing sequences $\{t_k\}$.
\end{definition}

\begin{theorem}
	We have the following embeddings 
	\begin{equation}
	  U^{p}_{\text{UH}}\hookrightarrow V^{p}_{\text{UH}}\hookrightarrow U^{q}_{\text{UH}}\hookrightarrow L^\infty_t L_x^2,\ 1\le p <q <\infty.
	\end{equation}

	Let $DV^p_{\text{UH}}$ be the space of functions
	\begin{equation}
	  DV^p_{\text{UH}} = \{(i\pa_t + \pa_1^2-\pa_2^2)u\ | \ u \in V^p\}
	\end{equation}
with the induced norm. And for the solution u to \eqref{linear}
we have the easy estimate:
\begin{equation}
  \|u\|_{V^p_{\text{UH}}}\lesssim \|u\|_{L^\infty_t L_x^2} + \|N\|_{DV^p_{\text{UH}}}.
\end{equation}
Moreover, we have the duality relation
\begin{equation}
  ( DV^p_{\text{UH}} )^* = U^{p'}_{\text{UH}},\ \ \text{for}\ \frac{1}{p} + \frac{1}{p'}=1.
\end{equation}
    Finally, for $G = L^4_{t,x} \cap L^\infty_t L^2_x$, we have the Strichartz-type estimate
    \begin{equation}\label{Stri}
        \|u\|_{G} \lesssim \|u\|_{L^\infty_t L^2_x} + \|N\|_{DV^2_{\text{UH}}} 
        \lesssim \|u|_{t=0}\|_{L^2_x} + \|N\|_{DV^2_{\text{UH}}} + I(u,N)^{\frac{1}{2}},
    \end{equation}
    where $I(u,N)$ denotes the interaction term
    \begin{equation}
        I(u,N) = \sup_y\|u^{y}\cdot N\|_{L^1_{t,x}},
    \end{equation}
    which arises naturally in energy estimates.
\end{theorem}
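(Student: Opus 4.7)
The plan is to specialize the abstract $U^p$-$V^p$ framework (surveyed in \cite{koch_Dispersive_2014}) to the ultrahyperbolic unitary group $\{e^{itH}\}_{t\in\mathbb{R}}$ on $L^2_x(\mathbb{R}^2)$, where $H := \partial_1^2 - \partial_2^2$. Most of the theorem --- the embedding chain, the easy estimate, and the duality --- will follow abstractly from this machinery; the genuinely concrete inputs are a Strichartz inequality for the free ultrahyperbolic flow and an interaction-type energy identity.

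\textbf{Abstract facts.} First, the inclusion $U^p_{\text{UH}} \hookrightarrow V^p_{\text{UH}}$ is immediate from the atomic definition: for an atom $u = \sum_k 1_{[t_k,t_{k+1})}e^{itH}u_k$, the profiles $e^{-it_k H}u(t_k) = u_k$ produce a jump sequence bounded in $\ell^p(L^2)$. The sharper embedding $V^p_{\text{UH}} \hookrightarrow U^q_{\text{UH}}$ for $p < q$ is the main structural fact of the theory; I would prove it by dyadically decomposing a $V^p$-function according to the size of its jumps, presenting each dyadic piece as a sum of $U^q$-atoms, and closing the summation via H\"older. The inclusion into $L^\infty_t L^2_x$ is trivial from the atomic structure, and the easy estimate is built into the definition of $DV^p_{\text{UH}}$. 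For the duality $(DV^p_{\text{UH}})^* = U^{p'}_{\text{UH}}$, the map $(i\partial_t + H): V^p_{\text{UH}} \to DV^p_{\text{UH}}$ is an isometric isomorphism by construction, reducing the claim to the classical $V^p$-$U^{p'}$ duality under the pairing $(u,N) \mapsto \int \langle u(t), N(t)\rangle_{L^2_x}\, dt$.

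\textbf{Free-flow Strichartz and transference.} The symbol $-\xi_1^2 + \xi_2^2$ has non-degenerate (indefinite) Hessian, so the kernel of $e^{itH}$ factors as a product of two one-dimensional Schr\"odinger kernels of opposite sign, yielding the dispersive bound $\|e^{itH}f\|_{L^\infty_x} \lesssim |t|^{-1}\|f\|_{L^1_x}$. The Keel--Tao $TT^*$ argument at the admissible pair $(4,4)$ in dimension two then gives $\|e^{itH}f\|_{L^4_{t,x}} \lesssim \|f\|_{L^2_x}$. The transference principle lifts this to $\|u\|_{L^4_{t,x}} \lesssim \|u\|_{U^4_{\text{UH}}}$, and combined with $V^2_{\text{UH}} \hookrightarrow U^4_{\text{UH}} \hookrightarrow L^\infty_t L^2_x$ we obtain $\|u\|_G \lesssim \|u\|_{V^2_{\text{UH}}}$. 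The easy estimate then yields the first inequality of \eqref{Stri}.

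\textbf{Interaction refinement.} For the second inequality, self-adjointness of $H$ together with the equation $(i\partial_t + H)u = N$ yields the energy identity
\begin{equation*}
\|u(T)\|_{L^2_x}^2 - \|u(0)\|_{L^2_x}^2 = 2\int_0^T \Im \langle u, N\rangle_{L^2_x}\, dt \le 2\|u\bar N\|_{L^1_{t,x}} \le 2\,I(u,N),
\end{equation*}
where the last step takes $y=0$ inside the supremum defining $I(u,N)$. Taking square roots and substituting into the first inequality closes \eqref{Stri}. The main (mild) obstacle I foresee is the $V^p \hookrightarrow U^q$ embedding, which is the one genuinely non-trivial structural input; the rest is either immediate from the definitions or a direct consequence of unitarity plus the standard Keel--Tao dispersive estimate, and the ultrahyperbolic signature enters only through the non-degeneracy of the Hessian, which leaves all these arguments intact.
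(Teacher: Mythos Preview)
Your proposal is correct and matches the paper's approach: the paper does not supply a proof of this theorem at all, but merely remarks that the spaces are adaptations of the standard $U^p$--$V^p$ machinery and that the embeddings, duality, and Strichartz-type properties are proved ``using the methods outlined'' in \cite[Chap.~4]{koch_Dispersive_2014}. Your sketch is precisely a fleshing-out of that reference --- specializing the abstract theory to the unitary group $e^{it(\partial_1^2-\partial_2^2)}$, invoking the non-degenerate Hessian for the dispersive bound, and closing the second inequality of \eqref{Stri} with the obvious $L^2$ energy identity --- so there is nothing to contrast.
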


\begin{remark}
    The spaces $U^p_{\text{UH}}$ and $V^p_{\text{UH}}$ are adaptations of the standard $U^p$ and $V^p$ spaces to the specific dispersive properties of the hyperbolic Schrödinger operator $i\partial_t + \partial_1^2 - \partial_2^2$. Their construction follows the general theory developed in \cite[Chap. 4]{koch_Dispersive_2014}, where analogous spaces are introduced for various dispersive equations. The key properties—embeddings, duality, and connection to Strichartz estimates—are proved using the methods outlined therein.
\end{remark}

\subsection{Bilinear estimates}

The following \textbf{div-curl} lemma, which was first introduced by the third author \cite{zhou_1+2dimensional_2022}, plays a crucial role in our proof.
\begin{lemma} [div-curl Lemma] \label{lemdiv-curl}
	Suppose that $f^{ij}, i, j=1, 2$ satisfy 	
	$$
	\begin{gathered}
		\left\{\begin{array}{l}
			\partial_t f^{11}+\partial_x f^{12}=G^1, \\
			\partial_t f^{21}-\partial_x f^{22}=G^2,\\
		\end{array}\right. \\
		f^{11}, f^{12}, f^{21}, f^{22} \rightarrow 0, \text{ as  }x \rightarrow \infty,
	\end{gathered}
	$$
	then it holds that	
	\begin{equation}\label{divcurl}
	\begin{aligned}
		&\int_{-\infty}^{+\infty} \int_{-\infty}^{+\infty} f^{11} f^{22}+f^{12} f^{21}  \mathrm{ d} x \mathrm{ d} t \\
        \leq&~ 2( \|f^{11}  \|_{L^\infty_t L^1_x} + \|G^1\|_{L^1_{t,x}})\cdot(  \|f^{21} \|_{L^\infty_t L^1_x}+ \|G^2\|_{L^1_{t,x}}).
	\end{aligned}
	\end{equation}
	provided that the right side is bounded.
\end{lemma}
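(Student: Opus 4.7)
The plan is to exploit the conservation-law form of the first equation by introducing an $x$-primitive of $f^{11}$, transferring the $x$-derivative via integration by parts onto $f^{22}$, then invoking the second equation to trade $\partial_x f^{22}$ for $\partial_t f^{21}$, and finally integrating by parts in $t$. This turns the integrand $f^{11}f^{22}+f^{12}f^{21}$ into two explicit source-type bilinear terms (each carrying one copy of $G^i$) plus a time-boundary remainder, all controlled by the two factors on the right-hand side of \eqref{divcurl}.

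Concretely, using the decay at $x=\infty$, define
\[
F^1(t,x) := -\int_x^{+\infty} f^{11}(t,x')\,dx', \qquad H^1(t,x) := \int_x^{+\infty} G^1(t,x')\,dx',
\]
so that $\partial_x F^1 = f^{11}$, $\|F^1\|_{L^\infty_{t,x}}\le \|f^{11}\|_{L^\infty_t L^1_x}$, and $\|H^1\|_{L^1_t L^\infty_x}\le \|G^1\|_{L^1_{t,x}}$. Integrating the first equation in $x$ from $x$ to $+\infty$ and using the decay of $f^{12}$ at infinity yields the key identity $\partial_t F^1 = -f^{12}-H^1$.

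The main computation is the chain
\[
\begin{aligned}
\iint f^{11}f^{22}\,dx\,dt
&= -\iint F^1\,\partial_x f^{22}\,dx\,dt
= -\iint F^1(\partial_t f^{21}-G^2)\,dx\,dt\\
&= \iint (\partial_t F^1)\,f^{21}\,dx\,dt + \iint F^1 G^2\,dx\,dt - \mathrm{bd}_t\\
&= -\iint f^{12}f^{21}\,dx\,dt - \iint H^1 f^{21}\,dx\,dt + \iint F^1 G^2\,dx\,dt - \mathrm{bd}_t,
\end{aligned}
\]
where the $x$-IBP has vanishing boundary (because $F^1(t,+\infty)=0$ and $f^{22}(t,-\infty)=0$) and $\mathrm{bd}_t := [\int F^1 f^{21}\,dx]_{t=-\infty}^{+\infty}$. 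Rearranging produces the clean identity
\[
\iint (f^{11}f^{22}+f^{12}f^{21})\,dx\,dt = \iint F^1 G^2\,dx\,dt - \iint H^1 f^{21}\,dx\,dt - \mathrm{bd}_t.
\]
Each right-hand term is bounded by Hölder: $|\iint F^1 G^2|\le \|f^{11}\|_{L^\infty_t L^1_x}\|G^2\|_{L^1_{t,x}}$, $|\iint H^1 f^{21}|\le \|G^1\|_{L^1_{t,x}}\|f^{21}\|_{L^\infty_t L^1_x}$, and the crude bound $|\mathrm{bd}_t|\le 2\|f^{11}\|_{L^\infty_t L^1_x}\|f^{21}\|_{L^\infty_t L^1_x}$. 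Comparing the sum of these three bounds term-by-term with the expansion of $2(\|f^{11}\|_{L^\infty_t L^1_x}+\|G^1\|_{L^1_{t,x}})(\|f^{21}\|_{L^\infty_t L^1_x}+\|G^2\|_{L^1_{t,x}})$ yields \eqref{divcurl}, and the constant $2$ is precisely what absorbs the time-boundary contribution.

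The one delicate point is the $t$-integration by parts, since the hypotheses supply only $L^\infty_t$ control on $f^{ij}$: a priori $\mathrm{bd}_t$ need not vanish and the various double integrals need not converge absolutely. I would handle this by a standard time-cutoff approximation—multiply the integrand by $\chi(t/T)$ for a smooth cutoff $\chi$, carry out the above with an additional commutator of size $O(T^{-1})\cdot\|f^{11}\|_{L^\infty_t L^1_x}\|f^{21}\|_{L^\infty_t L^1_x}$ coming from $\chi'(t/T)$, and pass to $T\to\infty$. In the typical application scenarios (where $f^{ij}$ arise from quadratic expressions in solutions that do decay in $t$), $\mathrm{bd}_t$ vanishes outright and one even recovers the bound with constant $1$.
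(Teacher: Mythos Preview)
Your argument is correct and is essentially the same as the paper's: the paper phrases the computation as differentiating the interaction functional $\int_{x<y} f^{11}(t,x)\,f^{21}(t,y)\,dx\,dy$ in $t$ and then integrating, which is exactly your integration-by-parts with the primitive $F^1$ recast in double-integral form. The resulting identity and the three Hölder bounds match term for term.
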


\begin{proof}
	 The same computation as in \cite{wang_periodic_2024} yields
	\begin{align*}
	&	\frac{\partial}{\partial t} \int_{x<y} f^{11} (t,x) f^{21} (t,y) \mathrm{ d} x\mathrm{ d} y+\int_{-\infty}^{+\infty}\left (f^{11} f^{21}+f^{12} f^{21}\right) \mathrm{ d} x \\
	=& \int_{-\infty}^{+\infty} \left( \int_{-\infty}^x f^{11} (t,y) \mathrm{ d} y\right)G^2(t,x) \mathrm{ d} x  +\int_{-\infty}^{+\infty} \left( \int^{+\infty}_x f^{21} (t,y) \mathrm{ d} y\right)G^1(t,x) \mathrm{ d} x ,
	\end{align*}
    then \eqref{divcurl} follows by integrating the above inequality with respect to $t$ over $\mathbb{R}$.
\end{proof} 

To proceed further we compute the conservation law of \eqref{linear}.

Multiplying \eqref{linear} with $\bar{u}$ and taking the image part, we have the mass conservation law
\begin{equation}
  \begin{aligned}
  \Im[\bar{u}(i\pa_t + \mu_l \pa_l^2)u] = \frac{1}{2}\pa_t |u|^2 + \mu_l \pa_l \Im (\bar{u}\pa_l u) = \Im (\bar{u} N).\\
  \end{aligned}
\end{equation}

Multiplying \eqref{linear} with $\pa_m\bar{u}$ and taking the real part, we have 
\begin{align}
	\Re[\pa_m \bar{u}(i\pa_t + \mu_l \pa_l^2)u] &= - \Im(\pa_m \bar{u} \pa_t u) + \mu_{l}\Re(\pa_m \bar{u} \pa_l^2 u)\notag\\
	& =  \frac{1}{2}  \Im(\pa_m u \pa_t \bar{u} - \pa_m \bar{u} \pa_t u) + \mu_{l}\pa_l \Re(\pa_m \bar{u} \pa_l u)-\mu_{l} \Re(\pa_m \pa_l\bar{u} \pa_l u) \notag\\
	& = \frac{1}{2}\pa_t \Im(\bar{u} \pa_{m}u)- \frac{1}{2}\pa_m\Im (\bar{u} \pa_t u )+ \mu_{l}\pa_l \Re(\pa_m \bar{u} \pa_l u)-\frac{1}{2}\mu_{l} \pa_m  |\pa_l u|^2 = \Re  (\pa_m\bar{u} N),
\end{align}
and 
\begin{align}
	\pa_m\Im (\bar{u} \pa_t u ) &=  \mu_l\pa_m\Im(i \bar{u} \pa_l^2 u) -\pa_m\Im (i\bar{u} N) =\mu_l\pa_m\Re(  \bar{u} \pa_l^2 u) -\pa_m\Re ( \bar{u} N)\notag\\
	& = \mu_l  \pa_m \pa_l \Re(\bar{u}\pa_l u) -  \mu_l  \pa_m |\pa_l  u|^2 - \pa_m \Re(\bar{u} N)\notag\\
	& = \frac{1}{2}\mu_l  \pa_m \pa^2_l|u|^2 -\mu_l  \pa_m |\pa_l  u|^2 - \pa_m \Re(\bar{u} N).
\end{align}
Combining above, we obtain the momentum conservation law
\begin{equation}
  \frac{1}{2}\pa_t \Im(\bar{u} \pa_{m}u) + \mu_{l}\pa_l \Re(\pa_m \bar{u} \pa_l u) - \frac{1}{4}\mu_l  \pa_m \pa^2_l|u|^2 = \Re(\pa_m\bar{u}  N)-\frac{1}{2}\pa_m\Re(\bar{u} N).
\end{equation}

\begin{proposition}\label{bilinear1}
	Given solutions $u,v$ to the equations
	\begin{align}
	   &(i\pa_t + \mu_l\pa_l^2)u = N,\label{lin1}\\
	   & (i\pa_t + \mu_l\pa_l^2)v = N',\label{lin2}
	\end{align}
	then for $k_2\ge  k_1 + 80$  we have 
	\begin{equation}
		\begin{aligned}
		 &\sup_y \Big\|\|P_{k_1} u^y\|_{L^2_{\widehat{x}_m}} \|\Xi_m(D)P_{k_2} v\|_{L^2_{\widehat{x}_m}}\Big\|_{L^2_{t,x_m}}\\
		 &\lesssim 2^{-\frac{k_2}{2}} (\|P_{k_1} u\|_{L^\infty_t L^2_x}+ I(P_{k_1}u,P_{k_1}N)^{\frac{1}{2}})(\|P_{k_2} v\|_{L^\infty_t L^2_x}+ I(P_{k_2}v,P_{k_2}N')^{\frac{1}{2}}), \\
		\end{aligned}
	\end{equation}
where we denote $\widehat{x}_{m} = (x_l)_{l \ne m},\ I(u,N) := \sup_{y}\|u^{y} N\|_{L^1_{t,x}}$ and  $\Xi_m(D)$ is the zero-th order Fourier multiplier supported in the Fourier region $\{|\xi|\lesssim  |\xi_m |\}$ such that $ \sum_{m=1,2}\Xi_m(D) = \text{Id}$.

\end{proposition}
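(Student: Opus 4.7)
The plan is to derive the estimate from the div-curl lemma (Lemma \ref{lemdiv-curl}) applied to the mass-conservation laws of $u$ and $v$, after reducing to a $1+1$-dimensional problem by integrating out the transverse variables $\widehat{x}_m$. By symmetry, I would first split $\Xi_m(D) = \Xi_m^+(D) + \Xi_m^-(D)$ according to the sign of $\xi_m$ and treat each piece separately; write $A = P_{k_1}u^y$ and $B = \Xi_m^\pm(D)P_{k_2}v$. The key is that on the support of $\hat B$ one has $\partial_m B = \pm i|\partial_m|B$, so the momentum current of $B$ acquires a definite sign after Plancherel.

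Integrating the mass conservation law $\tfrac{1}{2}\partial_t|u|^2 + \mu_l\partial_l\Im(\bar u\partial_l u) = \Im(\bar u N)$ over $\widehat{x}_m$, the flux in the directions $l\neq m$ drops out, producing two $1+1$-dimensional balance laws
\begin{align*}
\partial_t\bigl(\tfrac{1}{2}\|A\|_{L^2_{\widehat{x}_m}}^2\bigr) + \mu_m\partial_{x_m}\!\!\int\!\Im(\bar A\partial_m A)\,d\widehat{x}_m &= \int\!\Im(\bar A\,P_{k_1}N^y)\,d\widehat{x}_m, \\
\partial_t\bigl(\tfrac{1}{2}\|B\|_{L^2_{\widehat{x}_m}}^2\bigr) + \mu_m\partial_{x_m}\!\!\int\!\Im(\bar B\partial_m B)\,d\widehat{x}_m &= \int\!\Im(\bar B\,\Xi_m^\pm P_{k_2}N')\,d\widehat{x}_m.
\end{align*}
After flipping the sign of the flux of $A$ to match the hypothesis $\partial_t f^{21} - \partial_{x_m}f^{22} = G^2$ of Lemma \ref{lemdiv-curl}, and taking $B$ as the $j=1$ factor and $A$ as the $j=2$ factor, the lemma yields
\begin{equation*}
\int\!\!\int\bigl(f^{11}f^{22}+f^{12}f^{21}\bigr)\,dx_m\,dt \lesssim \bigl(\|B\|_{L^\infty_tL^2_x}^2 + \|G^1\|_{L^1}\bigr)\bigl(\|A\|_{L^\infty_tL^2_x}^2 + \|G^2\|_{L^1}\bigr).
\end{equation*}

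The crux of the proof is to show that the left-hand side is $\gtrsim 2^{k_2}\int\!\!\int\|A\|_{L^2_{\widehat{x}_m}}^2\|B\|_{L^2_{\widehat{x}_m}}^2\,dx_m\,dt$; once this is done the conclusion follows by dividing by $2^{k_2}$, taking square roots, and summing over $\pm$. Using $\partial_m B = \pm i|\partial_m|B$, the current of $B$ rewrites as $\pm\int\Re(\bar B|\partial_m|B)\,d\widehat{x}_m$, and pairing this against the low-frequency density $\|A\|_{L^2_{\widehat{x}_m}}^2$ (whose Fourier support in $x_m$ lies at scale $\lesssim 2^{k_1}$) produces, via Plancherel on $x_m$, the leading factor $2^{k_2}$ coming from $|\xi_m|\sim 2^{k_2}$ on the support of $\hat B$. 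The frequency gap $k_2-k_1\geq 80$ makes $\|A\|^2_{L^2_{\widehat x_m}}$ effectively a slowly varying weight on the oscillation scale of $B$, so the relative error is $O(2^{k_1-k_2})\leq 2^{-80}$ and absorbs. The competing cross term $\int\!\!\int\!\bigl(\int\Im(\bar A\partial_m A)d\widehat{x}_m\bigr)\|B\|_{L^2_{\widehat{x}_m}}^2 dx_m dt$ is of size at most $2^{k_1}\int\!\!\int\|A\|^2\|B\|^2$ by Bernstein and is likewise absorbable. The source contributions $\|G^j\|_{L^1}$ are bounded by $I(P_{k_1}u,P_{k_1}N)$ and $I(P_{k_2}v,P_{k_2}N')$ using the $L$-notation to absorb the bounded multiplier $\Xi_m^\pm$, and the $\sup_y$ is preserved throughout because the translation acts only on $A$.

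The main obstacle is precisely the Fourier-side extraction of the $2^{k_2}$ factor in the crucial pairing, since $\int\Im(\bar B\partial_m B)d\widehat{x}_m$ is not of definite sign pointwise in $(t,x_m)$. The argument succeeds only because the split $\Xi_m^\pm$ fixes the sign of the $x_m$-group velocity of $B$, and because the frequency separation $k_2\geq k_1+80$ is large enough that the symbol $|\xi_m|$ restricted to the support of $\hat B$ can be treated as approximately constant against the low-frequency weight arising from $A$; otherwise the current would not dominate the error and cross terms.
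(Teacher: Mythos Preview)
Your approach differs from the paper's at the key structural choice: you pair the \emph{mass} conservation laws of both $A=P_{k_1}u^y$ and $B=\Xi_m^\pm P_{k_2}v$, whereas the paper pairs the mass law of $u_1^y=P_{k_1}u^y$ with the \emph{momentum} law of $v_2=\Xi_m(D)P_{k_2}v$ (no sign splitting needed). The momentum flux contains $\int|\partial_m v_2|^2\,d\widehat x_m$, which is pointwise nonnegative; the div-curl output is then $A+B$ with
\[
A=\iint\|u_1^y\|^2_{L^2_{\widehat x_m}}\|\partial_m v_2\|^2_{L^2_{\widehat x_m}}\,dt\,dx_m,
\qquad |B|\lesssim 2^{k_1-k_2}A,
\]
and the right-hand side of the div-curl lemma picks up one factor $2^{k_2}$ from the momentum source. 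The final comparison $\sup_y\iint\|u_1^y\|^2\|v_2\|^2\lesssim 2^{-2k_2}A$ is obtained by writing $v_2=2^{-k_2}L(\partial_m v_2)$ and absorbing the resulting $x_m$-translation of the kernel into the $\sup_y$ already present.

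Your sketch has a genuine gap at the step where you extract $2^{k_2}$. Even on the $\Xi_m^+$ piece, the current $\Re(\bar B|\partial_m|B)$ is \emph{not} pointwise nonnegative, and the claimed lower bound
\[
\int w\,\Re(\bar B|\partial_m|B)\gtrsim 2^{k_2}\int w|B|^2,\qquad w=\|A\|^2_{L^2_{\widehat x_m}},
\]
with relative error $O(2^{k_1-k_2})$ is not justified by ``treating $|\xi_m|$ as approximately constant'': the symbol $|\xi_m|$ varies by an $O(1)$ factor across the support of $\widehat B$, and the frequency gap only controls $|\xi-\zeta|\lesssim 2^{k_1}$ on the support of $\widehat w(\xi-\zeta)$, not the size of $|\xi_m|$ itself. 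One can symmetrize to obtain $\Re\int w\bar B|\partial_m|B=\int w\,\bigl||\partial_m|^{1/2}B\bigr|^2+O(2^{2k_1-k_2})(\cdots)$, but the main term is still not pointwise comparable to $2^{k_2}\int w|B|^2$; closing this requires the same Bernstein-plus-translation mechanism (absorbed into $\sup_y$) that the paper's momentum-law choice makes transparent, and which your sketch does not invoke. In short, the mechanism you cite (Plancherel against a slowly varying weight) is not the one that actually works here.
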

\begin{proof}
	Applying the operator $P_{k_1}$ and $\Xi_m(D)P_{k_2}$ to \label{lin1} and \eqref{lin2} respectively, we get the integrated mass conservation law of $ u^y_1 := P_{k_1} u^{y}$ and momentum conservation law of $v_2: = \Xi_m(D)P_{k_2} v$ as follows:
	\begin{align}
		& \frac{1}{2}\pa_t\int |u^y_1|^2 \mathrm{d}\widehat{x}_m + \mu_m \pa_m \int\Im (\overline{u^y_1}\pa_m u^y_1)\mathrm{d}\widehat{x}_m  =\int \Im (\overline{u^y_1} P_{k_1}N^{y})\mathrm{d}\widehat{x}_m,  \\
		&   \frac{1}{2}\pa_t\int  \Im(\overline{v_2} \pa_{m}v_2) \mathrm{d}\widehat{x}_m + \mu_{m}\pa_m \int\big( |\pa_m  v_2 |^2   - \frac{1}{4}  \pa_m^2 |v_2|^2 \big) \mathrm{d}\widehat{x}_m \\
		&\quad \quad   =\int   \Re(\pa_m\overline{v_2} \Xi_m(D)P_{k_2} N')-\frac{1}{2}\pa_m\Re(\overline{v_2} \Xi_m(D)P_{k_2} N')\mathrm{d}\widehat{x}_m  = 2^{k_2}\int L(P_{k_2}v, P_{k_2}N')\mathrm{d}\widehat{x}_m  .\notag
	\end{align}

	Applying the div-curl Lemma \ref{lemdiv-curl} to above conservation laws with respect to variable $(t,x_m)$, after integration by parts we have 
		\begin{align}
		&\sup_y\iint\Big(\int |u^y_1|^2 \mathrm{d}\widehat{x}_m \int  |\pa_m  v_2 |^2\mathrm{d}\widehat{x}_m \Big) \mathrm{d}t\mathrm{d}x_m   \notag \\
        &+\sup_{y}\iint \Big( \int \Re(\overline{u^y_1} \pa_{m}u^y_1) \mathrm{d}\widehat{x}_m\int \Re(\overline{v_2} \pa_{m}v_2) - \int \Im(\overline{u^y_1} \pa_{m}u^y_1) \mathrm{d}\widehat{x}_m\int \Im(\overline{v_2} \pa_{m}v_2) \mathrm{d}\widehat{x}_m \Big) \mathrm{d}t\mathrm{d}x_m  \notag\\
		&=: A + B\lesssim 2^{k_2} (\|P_{k_1} u\|^2_{L^\infty_t L^2_x}+ I(P_{k_1}u,P_{k_1}N))(\|P_{k_2} v\|^2_{L^\infty_t L^2_x}+ I(P_{k_2}v,P_{k_2}N')). \notag
		\end{align} 

And it's clear that 
\begin{align}
  &\sup_{y}\Big\|\|P_{k_1} u^{y}\|_{L^2_{\widehat{x}_m}} \|\Xi_m(D)P_{k_2} v\|_{L^2_{\widehat{x}_m}}\Big\|_{L^2_{t,x_m}}^2\notag\\
  &\lesssim 2^{-2k_2}\sup_{y'}\Big\|\|u_1^{y'}\|_{L^2_{\widehat{x}_m}} \|\pa_m v_2\|_{L^2_{\widehat{x}_m}}\Big\|^2_{L^2_{t,x_m}} \lesssim 2^{-2k_2}A, 
\end{align}
and 
\begin{align}
	|B| &\lesssim    \sup_y \iint\Big( \|u_1^y\|_{L^2_{\widehat{x}_m}}\|\pa_m u^y_1\|_{L^2_{\widehat{x}_m}}\|v_2\|_{L^2_{\widehat{x}_m}}\|\pa_m v_2\|_{L^2_{\widehat{x}_m}}\Big)\   \mathrm{d}t\mathrm{d}x_m \notag \\
	&\lesssim 2^{k_1-k_2} \sup_{y} \iint\Big( \|u_1^y\|^2_{L^2_{\widehat{x}_m}} \|\pa_m v_2\|^2_{L^2_{\widehat{x}_m}}\Big)\   \mathrm{d}t\mathrm{d}x_m \lesssim 2^{k_1-k_2}A.
\end{align}

The conclusion follows from plugging the above inequalities and using the fact that $k_2\gg k_1$. 
\end{proof}

Using Proposition \ref{bilinear1} and Strichartz estimate, we obtain the core bilinear estimate in this paper:
\begin{proposition}\label{bilinear2} We have 
	\begin{equation}\label{bilinear}
		\begin{aligned}
		  &\sup_y\|P_{k_1}u^y P_{k_2} v\|_{L^2_{t,x}} \lesssim 2^{-\frac{|k_1-k_2|}{2}}\mathcal{D}(P_{k_1}u )\mathcal{D}(P_{k_2}v),
		\end{aligned}
	\end{equation} 
where  $ \mathcal{D}(u): = \|u\|_{G} + \sup_y\|u^{y}\cdot (i\pa_t + \mu_l \pa_l^2)u\|_{L^1_{t,x}}$.
\end{proposition}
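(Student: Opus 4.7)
The plan is to split the argument on $|k_1-k_2|$; by symmetry I will assume $k_1\leq k_2$ throughout. When $k_2-k_1\leq 80$, the factor $2^{-|k_1-k_2|/2}$ is bounded below by a universal constant, so I only need to show $\sup_y\|P_{k_1}u^y P_{k_2}v\|_{L^2_{t,x}}\lesssim \mathcal{D}(P_{k_1}u)\mathcal{D}(P_{k_2}v)$. I would obtain this immediately from H\"older in $L^2_{t,x}\hookleftarrow L^4_{t,x}\cdot L^4_{t,x}$, the translation invariance of $L^4_{t,x}$, and the embedding $G\hookrightarrow L^4_{t,x}$ combined with the inequality $\|P_kw\|_G\leq \mathcal{D}(P_kw)$ built into the definition of $\mathcal{D}$.

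In the disparate regime $k_2\geq k_1+80$, I would first decompose $P_{k_2}v=\sum_{m=1,2}\Xi_m(D)P_{k_2}v$; since this is a finite sum, it suffices to estimate a single term, so fix $m$ and set $u_1=P_{k_1}u$, $v_2=\Xi_m(D)P_{k_2}v$. The idea is to reduce to Proposition \ref{bilinear1}, whose inner norm is taken in $L^2_{\widehat{x}_m}$ rather than in the full $L^2_x$, by peeling off the $\widehat{x}_m$ direction via a one-dimensional Bernstein inequality. Since $u_1$ has 2D Fourier support in $\{|\xi|\sim 2^{k_1}\}$, its partial Fourier transform in $\widehat{x}_m$ is supported in $\{|\widehat{\xi}_m|\lesssim 2^{k_1}\}$ uniformly in $x_m$, so 1D Bernstein yields, pointwise in $(t,x_m)$,
\begin{equation*}
\|u_1^y(t,x_m,\cdot)\|_{L^\infty_{\widehat{x}_m}}\lesssim 2^{k_1/2}\|u_1^y(t,x_m,\cdot)\|_{L^2_{\widehat{x}_m}}.
\end{equation*}
Substituting this into $\|u_1^yv_2\|_{L^2_{t,x}}^2=\int\|u_1^yv_2\|_{L^2_{\widehat{x}_m}}^2\,dt\,dx_m$ and using H\"older in $\widehat{x}_m$ produces
\begin{equation*}
\|u_1^yv_2\|_{L^2_{t,x}}^2\lesssim 2^{k_1}\Bigl\|\|u_1^y\|_{L^2_{\widehat{x}_m}}\|v_2\|_{L^2_{\widehat{x}_m}}\Bigr\|_{L^2_{t,x_m}}^2.
\end{equation*}
Invoking Proposition \ref{bilinear1} on the right-hand side, and noting that the factors $\|P_{k_j}\cdot\|_{L^\infty_tL^2_x}+I(\cdot,\cdot)^{1/2}$ appearing in its conclusion are each dominated by the corresponding $\mathcal{D}$, I obtain $\lesssim 2^{k_1}\cdot 2^{-k_2}\mathcal{D}(P_{k_1}u)^2\mathcal{D}(P_{k_2}v)^2$; taking square roots delivers the claimed $2^{-(k_2-k_1)/2}$ gain.

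The genuinely hard work is packaged into Proposition \ref{bilinear1}, which rests on the div-curl lemma and the mass/momentum conservation laws derived just above it; once that is in hand, the rest is organizational. The one conceptual point worth flagging is the role of the microlocalization $\Xi_m(D)$: it forces $|\xi_m|\sim|\xi|\sim 2^{k_2}$ on $v_2$, so that the momentum conservation law used inside Proposition \ref{bilinear1} produces a genuine $2^{-k_2/2}$ gain in the $x_m$-direction, which is exactly what compensates the $2^{k_1/2}$ Bernstein loss in the transverse direction to yield the net $2^{-|k_1-k_2|/2}$ factor.
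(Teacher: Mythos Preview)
Your proof is correct and follows essentially the same approach as the paper: split into the close-frequency regime (handled by $L^4\times L^4$ H\"older and $G\hookrightarrow L^4_{t,x}$) and the disparate regime (handled by the $\Xi_m$ decomposition, one-dimensional Bernstein on the low-frequency factor to pass from $L^\infty_{\widehat{x}_m}$ to $L^2_{\widehat{x}_m}$, and then Proposition~\ref{bilinear1}). The paper uses the threshold $|k_1-k_2|\le 100$ rather than $80$, but this is immaterial.
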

\begin{proof}
	If $|k_1-k_2|\le  100$, the bound for \eqref{bilinear} is straightforward; If $|k_1-k_2|\ge  100$, without loss of generality we assume $k_2\ge k_1$. We expand 
	\begin{align}
		\|P_{k_1}u^y P_{k_2} v\|_{L^2_{t,x}} &\le  \sum_{m = 1,2}\|P_{k_1}u \Xi_m(D)P_{k_2} v\|_{L^2_{t,x}}\notag\\
		&\le   \sum_{m = 1,2}\Big\|\|P_{k_1}u^y\|_{L^\infty_{\widehat{x}_m}} \|\Xi_m(D)P_{k_2} v\|_{L^2_{\widehat{x}_m}}\Big\|_{L^2_{t,x_m}}\notag\\
		&\lesssim 2^{\frac{k_1}{2}} \sum_{m = 1,2}\Big\|\|P_{k_1}u^y\|_{L^2_{\widehat{x}_m}} \|\Xi_m(D)P_{k_2} v\|_{L^2_{\widehat{x}_m}}\Big\|_{L^2_{t,x_m}},
	\end{align} 
which   can be controlled by Proposition \ref{bilinear1}.
\end{proof}

\section{Estimates in the heat flow direction}
We begin with the following frequency-localized product estimate based on Bony's paraproduct decomposition.
 \begin{lemma}\label{Bony}
	Given $f,g $ Schwartz function, let 
\begin{equation}
  \alpha_k(f) = \sum_{|j-k|\le 20}\|P_j f\|_G, \  \alpha_k(g) = \sum_{|j-k|\le 20}\|P_j g\|_G,
\end{equation}
then we have
\begin{equation}
  \|P_k(f g) \|_{G}\lesssim \sum_{k'\le  k} 2^{k'}[\alpha_{k'}(f)\alpha_k(g) +  \alpha_{k'}(g)\alpha_k(f)] + \sum_{k'\ge k}2^{k} \alpha_{k'}(f)\alpha_{k'}(g).
\end{equation}
 \end{lemma}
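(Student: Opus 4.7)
The plan is to decompose $fg$ via Bony's paraproduct
\[
fg = \sum_{k_1 \le k_2 - 4} P_{k_1}f \, P_{k_2}g + \sum_{k_2 \le k_1 - 4} P_{k_1}f \, P_{k_2}g + \sum_{|k_1-k_2|\le 4} P_{k_1}f \, P_{k_2}g,
\]
apply $P_k$ to each piece, and estimate using Bernstein's inequality and H\"older in the Strichartz space $G = L^\infty_t L^2_x \cap L^4_{t,x}$ on $\mathbb{R}^2$. The frequency support of $P_k$ will automatically restrict the admissible pairs $(k_1,k_2)$ that contribute in each piece.

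For the low-high interaction, the outer $P_k$ forces $|k_2-k|\le 4$ when $k_1 \le k-10$. For each such summand I would place $P_{k_1}f$ in $L^\infty_{t,x}$, using the two-dimensional Bernstein bound $\|P_{k_1}f\|_{L^\infty_{t,x}} \lesssim 2^{k_1}\|P_{k_1}f\|_{L^\infty_t L^2_x} \lesssim 2^{k_1}\alpha_{k_1}(f)$, and invoke H\"older pointwise in $x$ and $t$ to absorb the remaining factor into $\alpha_k(g)$; this gives both the $L^\infty_t L^2_x$ and $L^4_{t,x}$ bounds at cost $2^{k_1}\alpha_{k_1}(f)\alpha_k(g)$. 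Summing over $k_1 \le k$ yields the first term $\sum_{k' \le k} 2^{k'}\alpha_{k'}(f)\alpha_k(g)$. The high-low sum is identical after swapping the roles of $f$ and $g$.

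For the high-high interaction, only pairs with $k_1,k_2 \ge k - 4$ and $|k_1-k_2|\le 8$ contribute, and the decisive gain must be extracted from the outer projector $P_k$ rather than from the inputs. For the $L^\infty_t L^2_x$ component I will apply Bernstein $\|P_k h\|_{L^2_x} \lesssim 2^k\|h\|_{L^1_x}$ together with Cauchy-Schwarz $\|P_{k'}f \cdot P_{k'}g\|_{L^1_x} \le \|P_{k'}f\|_{L^2_x}\|P_{k'}g\|_{L^2_x}$. For the $L^4_{t,x}$ component the crucial remark is that in $d=2$ Bernstein yields $\|P_k h\|_{L^4_x} \lesssim 2^k\|P_k h\|_{L^{4/3}_x}$ (since $2(3/4-1/4)=1$); combining this with the H\"older factorization $L^{4/3}_x \supset L^2_x \cdot L^4_x$ pointwise in time and $L^4_t \supset L^\infty_t \cdot L^4_t$ gives $2^k\|P_{k'}f\|_{L^\infty_t L^2_x}\|P_{k'}g\|_{L^4_{t,x}}$. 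Summing over $k' \ge k-4$ produces the final term $\sum_{k'\ge k} 2^k\alpha_{k'}(f)\alpha_{k'}(g)$.

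The argument is essentially mechanical once the paraproduct is set up, and I anticipate no genuine obstacle. The only delicate point is making the high-high prefactor $2^k$ rather than $2^{k'}$ (which would diverge on summation), and this is resolved precisely by applying Bernstein to the outer $P_k$ through the $L^1\to L^2$ and $L^{4/3}\to L^4$ embeddings valid in two spatial dimensions. The $|j-k|\le 20$ averaging in the definition of $\alpha_k$ absorbs the $O(1)$ frequency shifts created by the overlap of the Littlewood-Paley supports, so no loss is incurred in replacing, e.g., $\|P_{k_2}g\|_G$ by $\alpha_k(g)$ when $|k_2-k|\le 4$.
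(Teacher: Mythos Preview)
The proposal is correct and follows essentially the same argument as the paper: both use the standard Bony paraproduct decomposition, handle the low-high and high-low pieces by placing the low-frequency factor in $L^\infty_{t,x}$ via Bernstein, and extract the $2^k$ gain in the high-high piece by applying Bernstein to the outer projector through the embeddings $L^1_x\to L^2_x$ and $L^{4/3}_x\to L^4_x$ in $\mathbb{R}^2$. Your identification of the key point---that the high-high prefactor must be $2^k$ rather than $2^{k'}$, obtained from the outer $P_k$---matches the paper exactly.
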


\begin{proof}
	We apply the Bony calculus decomposition to the product $fg$
\begin{equation}
  P_k(f g)  = \sum_{k_1\le  k-4}^{|k_2-k|\le 4} P_{k}(P_{k_1}f P_{k_2}g )  +\sum_{k_2\le  k-4}^{|k_1-k|\le 4}P_k( P_{k_1}f P_{k_2}g) + \sum_{k_1,k_2\ge  k-4}^{|k_1-k_2|\le 8}P_{k}(P_{k_1}f P_{k_2}g). 
\end{equation}
Following standard terminology, we refer to these terms respectively as the Low-High, High-Low, and High-High paraproduct interactions.

For the Low-High interaction, we use Bernstein-type inequalities to obtain
\begin{equation}\label{lowhigh1}
  \|P_{k_1}f P_{k_2}g\|_{G}\lesssim \|P_{k_1}f\|_{L^\infty_{t,x}}\| P_{k_2}g\|_{G}\lesssim 2^{k_1}\alpha_{k_1}(f)\alpha_{k_2}(g).
\end{equation}
The High-Low case is symmetric.

For the High-High interaction, we have 
\begin{equation}\label{highhigh1}
  \|P_{k_1}f P_{k_2}g\|_{G}\lesssim 2^{k} \|P_{k_1}f P_{k_2}g\|_{L^\infty_t L^1_x \cap  L^4_t L^{\frac{4}{3}}_x}\lesssim 2^{k}\alpha_{k_1}(f)\alpha_{k_2}(g).
\end{equation}

The conclusion follows by summing these estimates over all frequency interactions.
\end{proof}

\begin{lemma}\label{Bony2}
	Let $f(s),g(s)$ be Schwartz functions with the norm $\alpha_k(f(s)),\alpha_k(g(s))$  defined as in Lemma \ref{Bony} . Suppose that  for $s \in [2^{2j-1},2^{2j+1}],\ n \ge 3$ and $ \sigma \in [0,\sigma_1]$, the following bounds hold
\begin{align}
  &\alpha_k(f(s))\lesssim 2^{-\sigma k}\langle 2^{2k}s \rangle^{-n}\beta_{k,j}(\sigma),\ \alpha_k(g(s))\lesssim 2^{-\sigma k}\langle 2^{2k}s \rangle^{-n}\eta_{k,j}(\sigma),  \\
  &\alpha_k(h(s))\lesssim 2^{-(\sigma-1) k}\langle 2^{2k}s \rangle^{-n}\rho_{k,j}(\sigma).
\end{align}
where $\beta_{k,j},\ \eta_{k,j}$ are uniformly slowly varying in $k$. That is, for some sufficiently small $\widetilde{\delta}\ll 1$,
\begin{equation}
  \beta_{k,j}(\sigma) \lesssim 2^{\widetilde{\delta}|k-k'|}\beta_{k',j}(\sigma),\ \forall k,k',j\in \mathbb{Z},  
\end{equation}
and similarly for $\eta_{k,j}(\sigma),\rho_{k,j}(\sigma)$. 

Then, defining $(\beta_{k,j},\eta_{k,j},\rho_{k,j}): = (\beta_{k,j}(0),\eta_{k,j}(0),\rho_{k,j}(0))$, we have the following refined product estimates
\begin{align}
	\|P_k(f(s)g(s))\|_G \lesssim &~  2^{-\sigma k }\min{\{2^{k}, 2^{-j}\}}^{1-2\widetilde{\delta}}2^{-2\widetilde{\delta}j}\langle 2^{2k} s \rangle^{-n}(\beta_{-j,j}\eta_{k,j}(\sigma) +\eta_{-j,j}\beta_{k,j}(\sigma) ), \label{Bony21} \\
	\|P_k(f(s)\pa_x g(s))\|_G & \lesssim  2^{-\sigma k }\langle 2^{2k} s \rangle^{-n} 2^{k-j} (\beta_{-j,j}(\eta_{k,j}(\sigma)+ 1_{\{k+j\le 0\}}\eta_{-j,j}(\sigma)) +\eta_{-j,j}\beta_{k,j}(\sigma)  ), \label{Bony22}\\
	 & \lesssim 2^{-\sigma k }\langle 2^{2k} s \rangle^{-n} 2^{k-j} (\langle 2^{-\frac{k+j}{4}} \rangle\beta_{-j,j} \eta_{k,j}(\sigma) +\eta_{-j,j}\beta_{k,j}(\sigma)  ),\notag \\
	\|P_k(f(s) h(s))\|_G  &\lesssim   2^{-(\sigma-1) k }\langle 2^{2k} s \rangle^{-n} 2^{-j} (  \rho_{-j,j}( \beta_{k,j}(\sigma) + 1_{\{k+j\le 0\}} \beta_{-j,j}(\sigma)  )+\beta_{-j,j}\rho_{k,j}(\sigma)  ).\label{Bony23}
\end{align}
Moreover, for $\sigma\ge \frac{1}{5}$ the bound \eqref{Bony23} can be improved to 
\begin{equation}\label{Bony24}
\|P_k(f(s) h(s))\|_G  \lesssim   2^{-(\sigma-1) k }\langle 2^{2k} s \rangle^{-n} 2^{-j} ( \rho_{-j,j} \beta_{k,j}(\sigma) +\beta_{-j,j}\rho_{k,j}(\sigma)  )
\end{equation}
\end{lemma}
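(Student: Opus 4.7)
The plan is to apply Lemma \ref{Bony} (Bony's paraproduct decomposition) to each of the three products $fg$, $f\partial_x g$ and $fh$, substitute the hypothesized frequency-envelope bounds, and then carefully sum the resulting series by exploiting the heat decay factor $\langle 2^{2k'}s\rangle^{-n}$ together with the slowly-varying property of $\beta_{k,j}$, $\eta_{k,j}$ and $\rho_{k,j}$.

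The key observation is the following localization. Since $s\in [2^{2j-1},2^{2j+1}]$, the factor $\langle 2^{2k'}s\rangle^{-n}$ is $\sim 1$ for $k'\le -j$ and decays like $2^{-2n(k'+j)}$ for $k'>-j$. Combined with the slowly-varying bound $\beta_{k',j}\lesssim 2^{\widetilde{\delta}|k'+j|}\beta_{-j,j}$, this produces a uniform estimate of the form $\alpha_{k'}(f)\lesssim 2^{-\sigma k'}2^{-\widetilde{\delta}|k'+j|}\min\{1,2^{-(2n-2\widetilde{\delta})(k'+j)_+}\}\beta_{-j,j}(\sigma)$, and analogous estimates for $g$ and $h$. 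Since $n\ge 3$, the geometric sums over $k'$ then concentrate either near the output frequency $k$ or near the critical frequency $k'=-j$, and this dichotomy is precisely the source of the $\min\{2^k,2^{-j}\}^{1-2\widetilde{\delta}}$ factor in the conclusion.

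For \eqref{Bony21}, I split $P_k(fg)$ into Low-High, High-Low and High-High pieces via Lemma \ref{Bony}. In the Low-High contribution $\sum_{k'\le k}2^{k'}\alpha_{k'}(f)\alpha_k(g)$, the above localization makes the sum dominated by the top $k'\sim k$ when $k\le -j$ and by the critical $k'\sim -j$ when $k\ge -j$; the two cases combine to precisely $\min\{2^k,2^{-j}\}^{1-2\widetilde{\delta}}2^{-2\widetilde{\delta}j}\beta_{-j,j}\eta_{k,j}(\sigma)$. The High-Low piece is symmetric and gives $\eta_{-j,j}\beta_{k,j}(\sigma)$. The High-High piece $\sum_{k'\ge k}2^k\alpha_{k'}(f)\alpha_{k'}(g)$ is controlled by the same argument after bringing both input frequencies to the critical scale. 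For \eqref{Bony22}, the extra derivative $\partial_x$ amplifies the high-frequency factor in each block by its frequency, so the Low-High term picks up an additional $2^k$, whereas the High-Low and High-High pieces pick up only the low-frequency factor $2^{k'}$; these two different placements are what generate the overall $2^{k-j}$ prefactor and the additional $\langle 2^{-(k+j)/4}\rangle$ correction that appears in the low-output regime.

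For \eqref{Bony23}, the analysis is analogous, but $h$ carries the rougher weight $2^{-(\sigma-1)k}$, and at $k'=-j$ the bound on $h$ is amplified by a factor $2^j$. The essential new difficulty, responsible for the extra term $1_{\{k+j\le 0\}}\rho_{-j,j}\beta_{-j,j}(\sigma)$, comes from the High-High interaction with both input frequencies pinned at the critical scale $k'\sim -j$ and output $k\le -j$: the slowly-varying loss $\beta_{-j,j}(\sigma)/\beta_{k,j}(\sigma)\lesssim 2^{\widetilde{\delta}|k+j|}$ then cannot be absorbed into the target main term, so this piece must be kept separately. For the improved bound \eqref{Bony24} under $\sigma\ge \frac{1}{5}$, I would instead use the full $\sigma$-regularity bound on $f$, trading $2^{-\sigma k}$ for $2^{-\sigma k'}$ at $k'=-j$; the resulting gain $2^{\sigma(k+j)}$ then overwhelms the slowly-varying loss $2^{\widetilde{\delta}|k+j|}$ once $\sigma$ exceeds a fixed positive threshold determined by the weighted Bernstein balance for $h$ at the critical scale. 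The main obstacle throughout is the simultaneous bookkeeping of the two regimes $k\le -j$ and $k\ge -j$ across all three paraproduct interactions, which must be tracked uniformly in $\sigma\in[0,\sigma_1]$ and compatibly with the placement of $\partial_x$ in \eqref{Bony22}.
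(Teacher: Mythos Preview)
Your proposal is correct and follows essentially the same route as the paper: apply Lemma~\ref{Bony} to split into Low--High, High--Low, and High--High pieces, exploit the localization of $\langle 2^{2k'}s\rangle^{-n}$ at the critical scale $k'=-j$ together with the slowly-varying property to sum, and for \eqref{Bony24} keep the full $2^{-\sigma k'}$ weight in the High--High sum so that the gain $2^{\sigma(k+j)}$ (for $k+j\le 0$) absorbs the slowly-varying loss once $\sigma\ge \tfrac15>\widetilde\delta$. The paper's proof carries out exactly these computations with the explicit case splits $k+j\ge 0$ versus $k+j\le 0$ that you describe.
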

\begin{proof}
	We begin by establishing estimates for the low-frequency sum. We compute:
\begin{align}
	&\sum_{k'\le k}2^{k'}\alpha_{k'}(f(s))\lesssim \sum_{k'\le k} \langle 2^{2k'+2j}  \rangle^{-n}2^{k'}\beta_{k',j}\nonumber\\
	& \le1_{\{k+j\le 0\}}   \sum_{k'\le  k}2^{k'}\beta_{k',j} +   1_{\{k+j\ge 0\}}  \Big( \sum_{k'\le  -j}2^{k'}\beta_{k',j} + \sum_{-j\le k'\le  k}2^{k'}\langle 2^{2(k'+j)} \rangle^{-n}\beta_{k',j}  \Big)\nonumber\\
	&\lesssim 1_{\{k+j\le 0\}} \beta_{-j,j}  \sum_{k'\le  k}2^{k'}2^{-\widetilde{\delta}(k'+j)} \nonumber\\
	&\quad +   1_{\{k+j\ge 0\}} \beta_{-j,j}  \Big( \sum_{k'\le  -j}2^{k'}2^{-\widetilde{\delta}(j+k')} + \sum_{-j\le k'\le  k}2^{k'}\langle 2^{2(k'+j)} \rangle^{-n}2^{\widetilde{\delta}(k'+j)}  \Big)\nonumber\\
	&\lesssim \min{\{2^k, 2^{-j}\}}^{1-\widetilde{\delta}}2^{-\widetilde{\delta}j} \beta_{-j,j}, 
\end{align}
where we use the slow variation property of $\beta_{k,j}$. 
Similarly we have 
\begin{align}
  &\sum_{k'\le k}2^{k'}\alpha_{k'}(f(s))\lesssim \min{\{2^k, 2^{-j}\}}^{1-\widetilde{\delta}}2^{-\widetilde{\delta}j}\eta_{-j,j},\\
  &\sum_{k'\le k}2^{k'}\alpha_{k'}(h(s))\lesssim 2^{k}\min{\{2^k, 2^{-j}\}}^{1-\widetilde{\delta}}2^{-\widetilde{\delta}j}\rho_{-j,j} 
\end{align}

We now apply Lemma \ref{Bony} to estimate the product terms. The estimates split naturally into High-Low, Low-High, and High-High frequency interactions according to the paraproduct decomposition.

\textbf{High-Low and Low-High Interactions:} Consider the sums over \(k_1, k_2\) where one frequency is much lower than \(k\). For the product $P_k( P_{k_1}f(s) P_{k_2}g(s))$, we have 
\begin{align}
	&\sum_{k_1\le  k-4}^{|k_2-k|\le 4}  +\sum_{k_2\le  k-4}^{|k_1-k|\le 4}\|P_k( P_{k_1}f(s) P_{k_2}g(s))\|_G\nonumber\\
    & \lesssim \alpha_k(g(s))\sum_{k'\le  k} 2^{k'} \alpha_{k'}(f(s)) +\alpha_k(f(s))  \sum_{k'\le  k} 2^{k'}\alpha_{k'}(g(s))  \notag\\
	&\lesssim 2^{-\sigma k } \min{\{2^k, 2^{-j}\}}^{1-\widetilde{\delta}}2^{-\widetilde{\delta}j}\langle 2^{2k} s \rangle^{-n}(\beta_{-j,j}\eta_{k,j}(\sigma) +\eta_{-j,j}\beta_{k,j}(\sigma) ).
\end{align}

For the product $ P_k( P_{k_1}f(s) P_{k_2}\pa_x g(s)) $, the additional derivative contributes a factor of $2^k$, which leads to 
\begin{align}
	&\sum_{k_1\le  k-4}^{|k_2-k|\le 4}  +\sum_{k_2\le  k-4}^{|k_1-k|\le 4}\|P_k( P_{k_1}f(s) P_{k_2}\pa_x g(s))\|_G\nonumber\\
    & \lesssim 2^k\alpha_k(g(s))\sum_{k'\le  k} 2^{k'} \alpha_{k'}(f(s)) +2^k \alpha_k(f(s))  \sum_{k'\le  k} 2^{k'}\alpha_{k'}(g(s))  \notag\\
	&\lesssim 2^{-\sigma k }2^{k-j}\langle 2^{2k} s \rangle^{-n}(\beta_{-j,j}\eta_{k,j}(\sigma) +\eta_{-j,j}\beta_{k,j}(\sigma) ).
\end{align}

For the product $ P_k( P_{k_1}f(s) P_{k_2}  h(s)) $, similar computation shows that 
\begin{align}
	&\sum_{k_1\le  k-4}^{|k_2-k|\le 4}  +\sum_{k_2\le  k-4}^{|k_1-k|\le 4}\|P_k( P_{k_1}f(s) P_{k_2} h(s))\|_G\nonumber\\
    & \lesssim  \alpha_k(h(s))\sum_{k'\le  k} 2^{k'} \alpha_{k'}(f(s)) +  \alpha_k(f(s))  \sum_{k'\le  k} 2^{k'}\alpha_{k'}(h(s))  \notag\\
	&\lesssim 2^{-(\sigma-1)k} \min{\{2^k, 2^{-j}\}}^{1-\widetilde{\delta}}2^{-\widetilde{\delta}j}\langle 2^{2k} s \rangle^{-n}(\beta_{-j,j}\rho_{k,j}(\sigma) +\rho_{-j,j}\beta_{k,j}(\sigma) ).\label{Bony241}
\end{align}
which is consistent with the bound \eqref{Bony23} and \eqref{Bony24}.

\textbf{High-High Interactions:} It remains to estimate the sum where both frequencies are high and comparable. For the product $P_k( P_{k_1}f(s) P_{k_2}g(s))$,  we have  
\begin{align}
	&\sum_{k_1,k_2 \ge  k-4}^{|k_1-k_2|\le 8}  \|P_k( P_{k_1}f(s) P_{k_2}g(s))\|_G \lesssim \sum_{k'\ge  k}2^{k}\alpha_{k'}(f(s))\alpha_{k'}(g(s))\nonumber\\
    & \lesssim\sum_{k'\ge  k} 2^{k}2^{-\sigma k'}\langle  2^{2k'+2j} \rangle^{-2n}\beta_{k',j}\eta_{k',j}(\sigma) \notag\\
	&\lesssim 1_{\{k+j \ge 0 \}}\sum_{k'\ge k} \ldots + 1_{\{k+j \le 0 \}}\Big(\sum_{k\le k' \le -j} \ldots+ \sum_{k'\ge -j} \ldots\Big)\notag\\
	&\lesssim 1_{\{k+j \ge 0 \}}2^{-j}2^{-\sigma k}\beta_{-j,j}\eta_{k,j}(\sigma)\sum_{k'\ge k} 2^{k+j} 2^{\widetilde{\delta}(k'-k)} 2^{\widetilde{\delta}(k'+j)}\langle2^{2k'+2j} \rangle^{-2n}\notag\\
	&\quad + 1_{\{k+j \le 0 \}}2^{-\sigma k}\beta_{-j,j}\eta_{k,j}(\sigma)\notag\\
	& \qquad\cdot 2^{k}\Big(\sum_{k\le k' \le -j} 2^{2\widetilde{\delta}(k'-k)}2^{-\widetilde{\delta}(k'+j)} + \sum_{k'\ge -j} 2^{\widetilde{\delta}[(k'-k)+ (k'+j)]} \langle2^{2k'+2j} \rangle^{-2n}\Big)\notag\\
	&\lesssim 1_{\{k+j \ge 0 \}}2^{-\sigma k}2^{-j}\langle2^{2k+2j} \rangle^{2-2n}\beta_{-j,j}\eta_{k,j}(\sigma) +1_{\{k+j \le 0 \}}2^{-\sigma k}2^{(1-2\widetilde{\delta})k-2\widetilde{\delta}j}  \beta_{-j,j}\eta_{k,j}(\sigma),  
\end{align}
which is consistent with the stated bound \eqref{Bony21}.

For $P_k( P_{k_1}f(s) P_{k_2}\pa_x g(s))$ in the High-High regime, the derivative gives an extra factor of $2^{k'}$, which leads to
\begin{align}
	&\sum_{k_1,k_2 \ge  k-4}^{|k_1-k_2|\le 8}  \|P_k( P_{k_1}f(s) P_{k_2}\pa_x g(s))\|_G \lesssim \sum_{k'\ge  k}2^{k+k'}\alpha_{k'}(f(s))\alpha_{k'}(g(s))\notag\\
    & \lesssim\sum_{k'\ge  k} 2^{k+k'}2^{-\sigma k'}\langle  2^{2k'+2j} \rangle^{-2n}\beta_{k',j}\eta_{k',j}(\sigma) \notag\\
	&\lesssim 1_{\{k+j \ge 0 \}}\sum_{k'\ge k} \ldots + 1_{\{k+j \le 0 \}}\Big(\sum_{k\le k' \le -j} \ldots+ \sum_{k'\ge -j} \ldots\Big)\notag\\
	&\lesssim 1_{\{k+j \ge 0 \}}2^{k-j}2^{-\sigma k}\beta_{-j,j}\eta_{k,j}(\sigma)\sum_{k'\ge k} 2^{k'+j} 2^{\widetilde{\delta}(k'-k)} 2^{\widetilde{\delta}(k'+j)}\langle2^{2k'+2j} \rangle^{-2n}\notag\\
	&\quad + 1_{\{k+j \le 0 \}}2^{-\sigma k}\beta_{-j,j}\eta_{-j,j}(\sigma)\notag\\
	& \qquad\cdot 2^{k-j}\Big(\sum_{k\le k' \le -j} 2^{k'+j}2^{-2\widetilde{\delta}(k'+j)} + \sum_{k'\ge -j}2^{k'+j} 2^{2\widetilde{\delta}(k'+j)} \langle2^{2k'+2j} \rangle^{-2n}\Big)\notag\\
	&\lesssim 1_{\{k+j \ge 0 \}}2^{k-j}2^{-\sigma k}\langle 2^{2k+2j} \rangle^{2-2n}\beta_{-j,j}\eta_{k,j}(\sigma) + 1_{\{k+j \le 0 \}}2^{k-j}2^{-\sigma k} \beta_{-j,j}\eta_{-j,j}(\sigma), \notag\\
	&\lesssim 1_{\{k+j \ge 0 \}}\ldots + 1_{\{k+j \le 0 \}}2^{k-j}2^{-\sigma k} 2^{-\widetilde{\delta}(k+j)}\beta_{-j,j}\eta_{k,j}(\sigma),\label{320}
\end{align}
matching  the stated bound \eqref{Bony22}. 

For the $P_k( P_{k_1}f(s) P_{k_2} h(s))$ in the High-High regime, similar computation as in \eqref{320}  shows that 
\begin{align}
	&\sum_{k_1,k_2 \ge  k-4}^{|k_1-k_2|\le 8}  \|P_k( P_{k_1}f(s) P_{k_2} h(s))\|_G \lesssim \sum_{k'\ge  k}2^{k }\alpha_{k'}(f(s))\alpha_{k'}(h(s))\notag\\
    & \lesssim\sum_{k'\ge  k} 2^{k+k'}2^{-\sigma k'}\langle  2^{2k'+2j} \rangle^{-2n}\beta_{k',j}(\sigma)\rho_{k',j} \notag\\
	&\lesssim 1_{\{k+j \ge 0 \}}2^{-j}2^{-(\sigma-1) k}\langle 2^{2k+2j} \rangle^{2-2n}\rho_{-j,j}\beta_{k,j}(\sigma)\notag\\
	&\quad\quad + 1_{\{k+j \le 0 \}}2^{-j}2^{-(\sigma-1) k}  \rho_{-j,j}\beta_{-j,j}(\sigma).\label{Bony242}
\end{align}

To obtain \eqref{Bony24} for $\sigma\ge  \frac{1}{5}$, in view of \eqref{Bony241} and \eqref{Bony242} it's sufficient to improve the bound in the High-High interaction when $k+j\le 0$. We have 
\begin{align}
	&1_{\{k+j\le 0\}}\sum_{k_1,k_2 \ge  k-4}^{|k_1-k_2|\le 8}  \|P_k( P_{k_1}f(s) P_{k_2}\pa_x h(s))\|_G \lesssim 1_{\{k+j\le 0\}}\sum_{k'\ge  k}2^{k }\alpha_{k'}(f(s))\alpha_{k'}(h(s))\notag\\
    & \lesssim 1_{\{k+j\le 0\}} \sum_{k'\ge  k} 2^{k+k'}2^{-\sigma k'}\langle  2^{2k'+2j} \rangle^{-2n}\beta_{k',j}(\sigma)\rho_{k',j} \notag\\
	&\lesssim 1_{\{k+j\le 0\}} \beta_{-j,j}(\sigma)\rho_{-j,j} 2^{ k}\Big(\sum_{k'\ge  -j} 2^{(1-\sigma)k'} \langle  2^{2k'+2j} \rangle^{-2n}2^{2\widetilde{\delta}(k'+j)}  + \sum_{k \le  k'\le  -j} 2^{(1-\sigma)k'}  2^{-2\widetilde{\delta}(k'+j)} \Big)\notag\\
	&\lesssim 1_{\{k+j\le 0\}} \beta_{-j,j}(\sigma)\rho_{-j,j} 2^{ k} (\max\{2^{-(1-\sigma)j},2^{(1-\sigma)k}2^{-2\widetilde{\delta}(k+j)}\})  \notag\\
	&\lesssim1_{\{k+j\le 0\}} \beta_{-j,j}(\sigma)\rho_{-j,j} 2^{ k}2^{-(1-\sigma)j}  \langle 2^{(1-\sigma-2\widetilde{\delta})(k+j)}  \rangle  \notag\\
	&\lesssim \notag  1_{\{k+j\le 0\}} 2^{ \sigma (k+j)}\langle 2^{(1-\sigma-2\widetilde{\delta})(k+j)}  \rangle 2^{-j} 2^{-(\sigma-1)k}\rho_{-j,j}\beta_{-j,j}(\sigma)\notag\\
	&\lesssim 1_{\{k+j\le 0\}} 2^{\frac{1}{5}(k+j)}2^{-j} 2^{-(\sigma-1)k}\rho_{-j,j}\beta_{-j,j}(\sigma)\lesssim 1_{\{k+j\le 0\}}   2^{-j} 2^{-(\sigma-1)k}\beta_{k,j}(\sigma)\rho_{-j,j},\notag
\end{align}
which  sufficients to prove the bound \eqref{Bony24}.

This completes the proof.
\end{proof}

We now apply these technical lemmas to obtain estimates for the differentiated fields and connection coefficients $\psi_\alpha, A_\alpha$ in the heat flow direction.

By \eqref{afre} we have the following bound for $\psi_\alpha$ when   $s \in [2^{2j-1},2^{2j+1}]$: 
\begin{align}
	  &\|P_k \bm{\psi}(s)\|_{G}\le   2^{-\sigma k}\langle 2^{2k}s \rangle^{-4} a_{k}(\sigma),\ \text{for}\ \sigma \in [0,\sigma_1],\label{psi-bound}\\
	   &\|P_k \psi_t(s)\|_{G}\le   2^{-(\sigma-1)k}\langle 2^{2k}s \rangle^{-4} a_{k}(\sigma),\ \text{for}\ \sigma \in [\frac{1}{5},\sigma_1].\label{psit-bound}
	\end{align}
	
Define $B_1$ as the smallest constant in $[1,\infty)$ such that
	\begin{equation}\label{A-bound}
	  \|P_k \bm{A}(s)\|_{G}\lesssim B_1 2^{-\sigma k}\langle 2^{2k}s \rangle^{-\frac{7}{2}} a_{k,j}(\sigma),
	\end{equation}
where $a_{k,j}(\sigma)$ is defined by
\begin{equation}
  a_{k,j}(\sigma) :=a_{\min(-j,k)}a_{k}(\sigma). 
\end{equation}

We note that $a_{k,j}(\sigma)$ is uniformly slowly varying in $k$. Using the slow variation of $a_k$, we have the lower bound
\begin{equation}\label{low}
  \langle 2^{-\delta(k+j)} \rangle^{-1} a_{-j}a_k(\sigma)\lesssim   1_{\{k+j\le  0\}}a_k a_k(\sigma)+ 1_{\{k+j\ge  0\}}  a_{-j}a_{k}(\sigma)=  a_{k,j}(\sigma) 
\end{equation}  
and  the upper bound 
\begin{equation} \label{up}
   a_{k,j}(\sigma)  \lesssim  \langle 2^{-\delta(k+j)} \rangle a_{-j} a_{k}(\sigma).
\end{equation}
And the uniform slow variation of $a_{k,j}(\sigma)$ follows from that of $a_k(\sigma)$.

With these preliminaries, we apply Lemma \ref{Bony2} to control nonlinear interactions involving $\psi_\alpha$ and $A_\alpha$.
\begin{lemma}
	 Let  $s \in [2^{2j-1},2^{2j+1}], \sigma \in [0,\sigma_1]$ 
and assume the bound \eqref{A-bound} holds. 
Then we have the following bilinear and trilinear estimates 
\begin{align}
	&  \|P_k(\bm{\psi}(s)\cdot \bm{\psi}(s))\|_{G}\lesssim 2^{-\sigma k} \min{\{2^{k}, 2^{-j}\}}^{1-2\delta}2^{-2\delta j}   \langle 2^{2k}s \rangle^{-4}a_{-j}a_k(\sigma),\label{b1}\\
	& \|P_k(\bm{\psi}(s)\cdot \pa_x \bm{\psi}(s))\|_{G}\lesssim 2^{-\sigma k}\langle 2^{2k}s \rangle^{-4} 2^{k-j}\langle (2^{2k}s)^{-\frac{1}{8}} \rangle a_{-j}a_k(\sigma) ,\label{b2}\\
	&\|P_k(\bm{A}(s)\cdot \bm{\psi}(s))\|_G \lesssim B_1 2^{-\sigma k}  \min{\{2^{k}, 2^{-j}\}}^{1-6\delta}2^{-6\delta j} \langle 2^{2k} s \rangle^{-\frac{7}{2}}a_{-j}^2  a_{k}(\sigma), \label{b3}\\
	&\|P_k(\bm{A}(s)\cdot \bm{A}(s))\|_G \lesssim B^2_1 2^{-\sigma k}  \min{\{2^{k}, 2^{-j}\}}^{1-6\delta}2^{-6\delta j} \langle 2^{2k} s \rangle^{-\frac{7}{2}}a_{-j}^3 a_{k}(\sigma),\label{b4}\\
	&\|P_k(\bm{A}(s)\cdot \pa_x \bm{\psi}(s))\|_G \lesssim  B_1 2^{-\sigma k}2^{k-j}\langle 2^{2k} s \rangle^{-\frac{7}{2}}\langle (2^{2k}s)^{-\frac{1}{8}} \rangle a_{-j}^2 a_{k}(\sigma),  \label{b5}\\	
	& \|P_k(\bm{A}(s)\cdot \psi_t(s))\|_G \lesssim  B_1 2^{-\sigma k}2^{k-j}\langle 2^{2k} s \rangle^{-\frac{7}{2}}\langle (2^{2k}s)^{-\frac{1}{8}} \rangle a_{-j}^2 a_{k}(\sigma),\ \sigma\ge \frac{1}{5} ,\label{b8}\\	
	&\|P_{k}(\bm{A}(s)\cdot\bm{A}(s)\cdot \bm{\psi}(s))\|_{G}+\|P_{k}(\bm{\psi}(s)\cdot\bm{\psi}(s)\cdot \bm{\psi}(s))\|_{G}  \label{b7}\\
	&\qquad\quad  \lesssim B^2_1 2^{-\sigma k} 2^{-2j}\langle 2^{2k} s \rangle^{-\frac{7}{2}}(a_{-j}^4 + a_{-j}^2) a_{k}(\sigma), \notag\\
	&\|P_{k}(\bm{A}(s)\cdot\bm{\psi}(s)\cdot \bm{\psi}(s))\|_{G} \label{b6}\\
	&\qquad\quad \lesssim  B_1 2^{-\sigma k}(2^{2k}2^{-\frac{3(k+j)}{2}}1_{\{k+j\le 0\}} + 2^{-2j}1_{\{k+j\ge 0\}} )\langle 2^{2k} s \rangle^{-\frac{7}{2}} a_{-j}^3a_{k}(\sigma) .  \notag
\end{align}
\end{lemma}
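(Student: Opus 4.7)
The plan is to apply Lemma \ref{Bony2} (the refined paraproduct estimate) systematically to each product, with envelope choices dictated by the a priori bounds \eqref{psi-bound}, \eqref{psit-bound}, and \eqref{A-bound}. For a factor of $\bm{\psi}$ I would take $\eta_{k,j}(\sigma) = a_k(\sigma)$ with decay rate $n = 4$; for a factor of $\bm{A}$, take $\beta_{k,j}(\sigma) = B_1 a_{k,j}(\sigma)$, which by \eqref{up} is dominated by $B_1 \langle 2^{-\delta(k+j)}\rangle a_{-j} a_k(\sigma)$ and is slowly varying in $k$ with parameter at most $2\delta$; for $\psi_t$ (meaningful only when $\sigma \ge \frac{1}{5}$) take the $h$-type envelope $\rho_{k,j}(\sigma) = a_k(\sigma)$ with $n = 4$. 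Lemma \ref{Bony2} is then applied with $\widetilde{\delta} = \delta$ for pure $\bm{\psi}$ products (yielding the $1-2\delta$ exponent in \eqref{b1}) and with $\widetilde{\delta} = 3\delta$ for products involving $\bm{A}$ (yielding the $1-6\delta$ exponents in \eqref{b3}, \eqref{b4}). In every bilinear product the effective decay rate is the minimum of the two, so every estimate involving $\bm{A}$ inherits the $\langle 2^{2k} s \rangle^{-7/2}$ factor.

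With these identifications, each bilinear estimate reduces to a direct reading from Lemma \ref{Bony2}. Namely, \eqref{b1}, \eqref{b3}, and \eqref{b4} come from \eqref{Bony21}, where the symmetric combination $\beta_{-j,j}\eta_{k,j}(\sigma)+\eta_{-j,j}\beta_{k,j}(\sigma)$ collapses to the stated $a_{-j}^m a_k(\sigma)$ prefactor (for instance, in \eqref{b4} both terms evaluate to $B_1^2 a_{-j}^3 a_k(\sigma)$). The derivative estimates \eqref{b2} and \eqref{b5} follow from \eqref{Bony22}, the $\langle (2^{2k}s)^{-1/8}\rangle$ correction coming from the $\langle 2^{-(k+j)/4}\rangle$ term in its second form. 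For the time-derivative estimate \eqref{b8}, I would apply the sharpened bound \eqref{Bony24} (which requires exactly $\sigma \ge \frac{1}{5}$) to $\bm{A} \cdot \psi_t$ with $\psi_t$ as the $h$-factor; this already gives the claim without the $\langle (2^{2k}s)^{-1/8}\rangle$ correction, and the stated inequality follows since that factor is $\ge 1$.

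The trilinear estimates \eqref{b6}, \eqref{b7} are obtained by iterating the bilinear framework: I would decompose $\bm{A}\cdot\bm{\psi}\cdot\bm{\psi} = \bm{A}\cdot(\bm{\psi}\cdot\bm{\psi})$, $\bm{A}\cdot\bm{A}\cdot\bm{\psi} = (\bm{A}\cdot\bm{A})\cdot\bm{\psi}$, and $\bm{\psi}^3 = (\bm{\psi}\cdot\bm{\psi})\cdot\bm{\psi}$, extract a frequency envelope for each inner product from the corresponding bilinear bound, and feed this into a second paraproduct step. The case split in \eqref{b6} between $k+j\le 0$ and $k+j \ge 0$ reflects the dichotomy of high-high versus low-high regimes in the outer paraproduct: for $k+j \ge 0$ only the parabolic $s^{-1}$ decay survives, giving the $2^{-2j}$ factor, whereas for $k+j \le 0$ a Bernstein-type gain furnishes the additional $2^{2k}2^{-3(k+j)/2}$. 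The main technical obstacle lies in precisely this iteration: the envelope produced by \eqref{b1} contains the non-slowly-varying factor $\min\{2^k, 2^{-j}\}^{1-2\delta}$, which must be re-absorbed into a slowly-varying majorant at the cost of extra $a_{-j}$ factors before Lemma \ref{Bony2} can be applied a second time. Once this bookkeeping is carried out, the slow-variation constraint is preserved throughout and the stated powers of $B_1$ and $a_{-j}$ match those claimed.
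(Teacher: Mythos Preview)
Your proposal is essentially correct and follows the same strategy as the paper: apply Lemma~\ref{Bony2} with the envelopes $a_k(\sigma)$ for $\bm{\psi}$, $B_1 a_{k,j}(\sigma)$ for $\bm{A}$, and $a_k(\sigma)$ for $\psi_t$, then iterate for the trilinear terms. One point of imprecision: it is not quite true that the symmetric combination $\beta_{-j,j}\eta_{k,j}(\sigma)+\eta_{-j,j}\beta_{k,j}(\sigma)$ ``collapses'' directly to $a_{-j}^m a_k(\sigma)$ when one factor is $\bm{A}$. After applying \eqref{Bony21} with the natural $\widetilde\delta=2\delta$ (matching the slow-variation parameter of $a_{k,j}(\sigma)$), one obtains an intermediate bound with exponent $1-4\delta$ and a residual $a_{k,j}(\sigma)$ in the envelope; the paper then uses the elementary inequality $\min\{2^k,2^{-j}\}^{2\delta}\,a_{k,j}(\sigma)\lesssim 2^{-2\delta j}a_{-j}a_k(\sigma)$ (a consequence of \eqref{up}) as a separate post-processing step to convert $a_{k,j}(\sigma)$ into $a_{-j}a_k(\sigma)$, and it is this absorption that produces the final $1-6\delta$ exponent. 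Your choice $\widetilde\delta=3\delta$ does not bypass this step. Similarly, for the trilinear bound \eqref{b6} the paper first uses the \emph{lower} bound \eqref{low} to re-express the output of \eqref{b1} as $2^{-\sigma k}2^{-j}\langle 2^{2k}s\rangle^{-4}a_{k,j}(\sigma)$, thereby restoring a slowly-varying envelope before the second application of Lemma~\ref{Bony2}; this is exactly the ``re-absorption'' you allude to, and once done the computation is straightforward.
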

\begin{proof}
Bounds \eqref{b1} and \eqref{b2} follow directly from applying Lemma \ref{Bony2} to the product involving bound \eqref{psi-bound}.

To obtain bounds \eqref{b3}--\eqref{b5}, we apply Lemma \ref{Bony2} to the product forms of the bounds \eqref{psi-bound}, \eqref{psit-bound} and \eqref{A-bound}, obtaining
\begin{align}
	&\|P_k(\bm{A}(s)\cdot \bm{\psi}(s))\|_G  \label{b31}\\
	&\quad \lesssim B_1 2^{-\sigma k}\min{\{2^{k}, 2^{-j}\}}^{1-4\delta}2^{-4\delta j}\langle 2^{2k} s \rangle^{-\frac{7}{2}}a_{-j}(  a_{k,j}(\sigma) + a_{-j} a_k(\sigma)) \notag\\
	&\|P_k(\bm{A}(s)\cdot \bm{A}(s))\|_G \lesssim B^2_1 2^{-\sigma k} \min{\{2^{k}, 2^{-j}\}}^{1-4\delta}2^{-4\delta j}\langle 2^{2k} s \rangle^{-\frac{7}{2}}a_{-j}^2 a_{k,j}(\sigma),\label{b41}\\
	&\|P_k(\bm{A}(s)\cdot \pa_x \bm{\psi}(s))\|_G+ \|P_k(\bm{A}(s)\cdot \psi_t(s))\|_G\label{b51}\\
	& \quad\lesssim  B_1 2^{-\sigma k}2^{k-j}\langle 2^{2k} s \rangle^{-\frac{7}{2}}a_{-j}(\langle 2^{-\frac{k+j}{4}} \rangle a_{-j}  a_{k}(\sigma) +  a_{k,j}(\sigma) )\notag.
\end{align}
Then \eqref{b3} and \eqref{b4} follow from \eqref{b31}, \eqref{b41} together with  the inequality 
\begin{equation}
  \begin{aligned}
   &\min\{2^{k},2^{-j}\}^{2\delta} a_{k,j}(\sigma) \lesssim \min\{2^{k},2^{-j}\}^{2\delta}  \langle 2^{-2\delta(k+j)} \rangle a_{-j} a_k(\sigma)\lesssim 2^{-2\delta j}a_{-j}a_k(\sigma), 
  \end{aligned}
\end{equation}
while \eqref{b5} follows from \eqref{b51} and the upper bound \eqref{up} for $a_{k,j}(\sigma)$.

The bound \eqref{b7} is obtained by applying Lemma \ref{Bony2} to \eqref{psi-bound}, \eqref{b1}, and \eqref{b4} while  replacing the factor $\min\{2^{k},2^{-j}\}$ with $2^{-j}$ .

 For \eqref{b6}, we apply Lemma \ref{Bony2} to the product $\bm{A}(s)\cdot(\bm{\psi}(s)\cdot \bm{\psi}(s))$. Using \eqref{b1} and the lower bound \eqref{low} for $a_{k,j}(\sigma)$, we have
\begin{align} 
   \|P_k(\bm{\psi}(s)\cdot \bm{\psi}(s))\|_{G}&\lesssim 2^{-\sigma k} \min{\{2^{k}, 2^{-j}\}}^{1-2\delta}2^{-2\delta j}\langle 2^{2k}s \rangle^{-4}a_{-j}a_k(\sigma)\notag\\
   &\lesssim 2^{-\sigma k} \min{\{2^{k}, 2^{-j}\}}^{1-2\delta}2^{-2\delta j}\langle 2^{2k}s \rangle^{-4}\langle 2^{-2\delta(k+j)} \rangle a_{k,j}(\sigma)\notag\\
   &\lesssim 2^{-\sigma k} 2^{-j}\langle 2^{2k}s \rangle^{-4} a_{k,j}(\sigma).
\end{align}
Combining this with \eqref{A-bound} and \eqref{Bony21} yields
	\begin{align}
		&\|P_{k}(\bm{A}(s)\cdot\bm{\psi}(s)\cdot \bm{\psi}(s))\|_{G}\notag\\
		&\lesssim B_1 2^{-\sigma k}\min{\{2^{k}, 2^{-j}\}}^{1-4\delta}2^{-4\delta j}\langle 2^{2k} s \rangle^{-\frac{7}{2}} \cdot 2^{-j}a_{-j,j} a_{k,j}(\sigma) \notag\\
		&\lesssim B_1 2^{-\sigma k}2^{-j}\min{\{2^{k}, 2^{-j}\}}^{1-4\delta}2^{-4\delta j}\langle 2^{2k} s \rangle^{-\frac{7}{2}} a_{-j}^2   a_{k,j}(\sigma) \notag\\
		&\lesssim B_1 2^{-\sigma k}2^{-j}\min{\{2^{k}, 2^{-j}\}}^{1-6\delta}2^{-6\delta j}\langle 2^{2k} s \rangle^{-\frac{7}{2}} a_{-j}^3 a_k(\sigma)\notag\\
		&\lesssim B_1 2^{-\sigma k}(2^{2k}2^{-\frac{3(k+j)}{2}}1_{\{k+j\le 0\}} + 2^{-2j}1_{\{k+j\ge 0\}} )\langle 2^{2k} s \rangle^{-\frac{7}{2}} a_{-j}^3a_{k}(\sigma) .
	\end{align}
 This completes the proof.
\end{proof}

We now establish the main estimate for the connection coefficient $\bm{A}(s)$.

\begin{proposition}
  Let  $s \in [2^{2j_0-1},2^{2j_0+1}], \sigma \in [0,\sigma_1]$. Under the bootstrap assumptions in Proposition \ref{heatBA} we have 
  \begin{equation}\label{Ab}
	 \|P_{k}\bm{A}(s)\|_{G}\lesssim 2^{-\sigma k}\langle 2^{2k}s \rangle^{-\frac{7}{2}} a_{k,j_0}(\sigma).
  \end{equation}
\end{proposition}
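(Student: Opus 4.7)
The plan is to run a bootstrap argument on the constant $B_1$ defined in \eqref{A-bound}. The starting point is the integral representation \eqref{Aid}, which reads
\begin{equation*}
A_m(s) = -\int_s^{+\infty}\Im\bigl(\overline{\psi_m}\,D_l\psi_l\bigr)(r)\,\mathrm{d}r = -\int_s^{+\infty}\Im\bigl(\overline{\psi_m}\,\partial_l\psi_l\bigr)(r)\,\mathrm{d}r - \int_s^{+\infty}\Re\bigl(\overline{\psi_m}\,A_l\,\psi_l\bigr)(r)\,\mathrm{d}r.
\end{equation*}
After applying $P_k$ and the $G$-norm, Minkowski's inequality lets us move the norm inside the $r$-integral. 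I would then dyadically decompose the interval $[s,+\infty)$ as $\bigcup_{j\ge j_0-O(1)}[2^{2j-1},2^{2j+1}]$ and apply the bilinear estimate \eqref{b2} to the $\bm{\psi}\cdot\partial_x\bm{\psi}$ contribution and the trilinear estimate \eqref{b6} to the $\bm{A}\cdot\bm{\psi}\cdot\bm{\psi}$ contribution on each dyadic piece.

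Each dyadic piece contributes a factor $2^{2j}$ from $\mathrm{d}r$. Combining with \eqref{b2}, the bilinear piece on scale $j$ is
\begin{equation*}
\lesssim 2^{-\sigma k}\,\langle 2^{2k+2j}\rangle^{-4}\,\langle(2^{2k+2j})^{-1/8}\rangle\,2^{k+j}\,a_{-j}a_k(\sigma),
\end{equation*}
and summing over $j\ge j_0$ produces $2^{-\sigma k}\langle 2^{2k}s\rangle^{-7/2}a_{k,j_0}(\sigma)$ up to an absolute constant, because the sum is dominated at the transition scale $k+j\approx 0$ and the $\ell^2$-smallness of $a_j$ together with its slow variation produces the correct envelope $a_{k,j_0}(\sigma)$ (using \eqref{low}--\eqref{up} and the definition of $a_{k,j}(\sigma)$). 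The trilinear piece, by \eqref{b6} and an analogous summation, contributes
\begin{equation*}
\lesssim B_1\,2^{-\sigma k}\langle 2^{2k}s\rangle^{-7/2}\,\Bigl(\sum_{j\ge j_0} a_{-j}^2\Bigr)\,a_{k,j_0}(\sigma),
\end{equation*}
where I have used that $\langle 2^{-\delta(k+j_0)}\rangle a_{k,j_0}(\sigma)$ absorbs one factor of $a_{-j}$ via \eqref{up} and the monotone-in-$j$ manipulation of the remaining factor.

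Putting the two contributions together yields $B_1\lesssim 1 + B_1 \bigl(\sum_j a_{-j}^2\bigr)^{1/2}$ (or better) in the definition of $B_1$. The bootstrap assumptions of Proposition \ref{heatBA} give $a_k(\sigma)\le\varepsilon^{-1/4}b_k(\sigma)$ with $b_k$ an $\varepsilon^{3/4}$-frequency envelope, hence $\sum_k a_k^2 \lesssim \varepsilon$. For $\varepsilon$ sufficiently small the $B_1$-term on the right can be absorbed into the left, giving $B_1\lesssim 1$, which is precisely the claimed bound \eqref{Ab}.

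The main obstacle I anticipate is bookkeeping: the weight $\langle 2^{2k}s\rangle^{-7/2}$ on the left must be recovered after summing dyadic estimates with weights $\langle 2^{2k}r\rangle^{-4}$ times $\langle(2^{2k}r)^{-1/8}\rangle$, and the envelope $a_{k,j_0}(\sigma)$ (rather than $a_{k,j}(\sigma)$) must appear in the final bound. Both issues are settled by the split $\{k+j\le 0\}$ vs.\ $\{k+j\ge 0\}$, the uniform slow variation of $a_{k,j}(\sigma)$ in $k$, and the elementary inequalities \eqref{low}--\eqref{up} that relate $a_{k,j}$ at different values of $j$. A secondary point is that the $A_l\psi_l$ term uses \eqref{b6} rather than a simple product of \eqref{A-bound} with $\bm{\psi}$, which is essential to gain the extra smallness factor $a_{-j}^3$ needed to absorb the $B_1$-dependence.
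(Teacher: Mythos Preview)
Your proposal is correct and follows essentially the same route as the paper: start from the integral representation \eqref{Aid}, dyadically decompose $[s,\infty)$, apply \eqref{b2} to the $\bm{\psi}\cdot\partial_x\bm{\psi}$ piece and \eqref{b6} to the $\bm{A}\cdot\bm{\psi}\cdot\bm{\psi}$ piece, sum via the case split $k+j_0\gtrless 0$, and close the bootstrap $B_1\lesssim 1+B_1\varepsilon$. The only discrepancy is cosmetic---in one line you write the trilinear gain as $\sum_j a_{-j}^2$ and in the next as its square root---but either suffices to absorb $B_1$ for small $\varepsilon$.
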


\begin{proof}
We start from the identity \eqref{Aid}
\begin{equation} \label{Amid}
  A_m(s) = -\int_{s}^{+\infty}\Im(\overline{\psi_m}D_l \psi_l )(r)\mathrm{d}r,\ m = 1,2.
\end{equation}

Applying $P_k$ to \eqref{Amid} yields 
\begin{equation}\label{estimateA}\begin{aligned}
	\|P_k \bm{A}(s)\|_G \lesssim &~ \int_{s}^{+\infty} \|P_{k}(\bm{\psi}(r) \cdot \pa_x \bm{\psi}(r))\|_G    +  \|P_{k}(\bm{A}(r)\cdot \bm{\psi}(r) \cdot  \bm{\psi}(r))\|_G \mathrm{d} r.  
\end{aligned}
\end{equation}

For the first term, using \eqref{b2} we compute
\begin{align}
	&\int_{s}^{+\infty} \|P_{k}(\bm{\psi}(r) \cdot \pa_x \bm{\psi}(r))\|_G \mathrm{d} r \notag\\
	& \lesssim \sum_{j\ge  j_0}2^{2j} \cdot 2^{-\sigma k}\langle 2^{2k+2j} \rangle^{-4} 2^{k-j} \langle 2^{-\frac{k+j}{4}} \rangle a_{-j}a_k(\sigma) \notag\\
	&\lesssim 1_{\{k+j_0\ge 0\}}\sum_{j\ge  j_0}\ldots + 1_{\{k+j_0\le 0\}}\Big(\sum_{j_0\le j\le  -k}\ldots + \sum_{  j\ge  -k}\ldots \Big)\notag\\
	&\lesssim 1_{\{k+j_0\ge 0\}}2^{-\sigma k}a_{-j_0}a_k(\sigma)\sum_{j\ge  j_0}\langle 2^{2k+2j} \rangle^{-4}2^{k+j} 2^{\delta(j-j_0)}\notag\\
	&\  + 1_{\{k+j_0\le 0\}}2^{-\sigma k}a_{k}a_k(\sigma)\Big(\sum_{j_0\le j\le  -k}2^{k+j}2^{-(\frac{1}{4}+\delta)(k+j)} +\sum_{  j\ge  -k}\langle 2^{2k+2j} \rangle^{-4} 2^{k+j}2^{\delta(k+j)}  \Big) \notag\\
	&\lesssim 1_{\{k+j_0\ge 0\}}2^{-\sigma k}\langle 2^{2k+2j} \rangle^{-\frac{7}{2}} a_{-j_0}a_k(\sigma) + 1_{\{k+j_0\le 0\}} 2^{-\sigma k}a_k a_k(\sigma)\notag\\
	&\lesssim 2^{-\sigma k} \langle 2^{2k+2j} \rangle^{-\frac{7}{2}}  a_{k,j_0}(\sigma).\label{Ab1}
\end{align}

For the second term, using \eqref{b6} and the bootstrap assumption $\sum_k a_k^2 \le \varepsilon$,
\begin{align}
	&\int_{s}^{+\infty} \|P_{k}(\bm{A}(r)\cdot \bm{\psi}(r) \cdot  \bm{\psi}(r))\|_G \mathrm{d} r\notag\\
	& \lesssim \sum_{j\ge  j_0} B_1 2^{-\sigma k} (2^{ \frac{ k+j }{2}}1_{\{k+j\le 0\}} + 1_{\{k+j\ge 0\}} )\langle 2^{2k+2j}  \rangle^{-\frac{7}{2}} a_{-j}^3  a_k(\sigma)\notag\\
	&\lesssim 1_{\{k+j_0 \le  0 \}}B_1\varepsilon 2^{-\sigma k}a_k a_k(\sigma)\big(\sum_{j\ge -k}\langle 2^{2k+2j}  \rangle^{-\frac{7}{2}} 2^{\delta(k+j)} + \sum_{j_0\le j\le  -k}2^{ k+j }2^{-(\frac{1}{4} + 2\delta)(k+j)}\big)\notag\\
	&\quad + 1_{\{k+j_0\ge 0\}}B_1\varepsilon 2^{-\sigma k}\sum_{j\ge j_0}\langle 2^{2k+2j}  \rangle^{-\frac{7}{2}} 2^{\delta(j-j_0)}a_{-j_0}a_k(\sigma)\notag\\
	&\lesssim B_1\varepsilon2^{-\sigma k}( 1_{\{k+j_0 \le  0 \}} a_{k}a_{k}(\sigma)+ 1_{\{k+j_0 \le  0 \}}\langle 2^{2k+2j_0}  \rangle^{-\frac{7}{2}} a_{-j_0}a_k(\sigma))\notag\\
	&\lesssim B_1\varepsilon 2^{-\sigma k} \langle 2^{2k+2j_0}  \rangle^{-\frac{7}{2}}a_{k,j_0}(\sigma), \label{Ab2}
\end{align}

Combining \eqref{Ab1} and \eqref{Ab2}, we have 
\begin{equation}
  \|P_k A(s)\|_{G}\lesssim \langle B_1 \varepsilon \rangle  2^{-\sigma k} \langle 2^{2k } s \rangle^{-\frac{7}{2}}a_{k,j_0}(\sigma),
\end{equation}
which implies $B_1\lesssim 1 + B_1\varepsilon$, hence $B_1 \lesssim 1$ since $\varepsilon$ is sufficiently small.
\end{proof}

We next establish the bound for $\bm{\psi}(s)$ and $\psi_t(s)$ using the heat equation.

\begin{proposition}
	Under the bootstrap assumptions in Proposition \ref{heatBA}, we have 
	\begin{align}
		&\|P_k \bm{\psi}(s)\|_G \lesssim 2^{-\sigma k}\langle  2^{2k }s\rangle^{-4} b_k(\sigma),\ \text{for}\ \sigma \in [0,\sigma_1],\\
		& \|P_k \psi_t(s)\|_G \lesssim 2^{k}\langle  2^{2k }s\rangle^{-4}\varepsilon^{\frac{1}{2}},\ 2^{-(\sigma-1) k}\langle  2^{2k }s\rangle^{-4} b_k(\sigma),\ \text{for}\ \sigma \in [\frac{1}{5},\sigma_1].\label{psitk}
	\end{align}
\end{proposition}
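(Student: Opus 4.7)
The plan is a Duhamel/bootstrap argument on the heat semigroup, using the bilinear and trilinear estimates \eqref{b1}--\eqref{b7} together with the bound \eqref{Ab} on $\bm{A}$. Applying $P_k$ to the heat equation for $\psi_\alpha$ and invoking Duhamel yields
\[
P_k \bm{\psi}(s) \;=\; P_k e^{s\Delta}\bm{\psi}(0) \;+\; \int_0^s P_k e^{(s-r)\Delta} F(r)\,dr,
\]
where $F$ collects the nonlinear terms of schematic types $\bm{A}\cdot\pa_x\bm{\psi}$, $\pa_x\bm{A}\cdot\bm{\psi}$, $\bm{A}\cdot\bm{A}\cdot\bm{\psi}$ and $\bm{\psi}\cdot\bm{\psi}\cdot\bm{\psi}$. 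The linear contribution satisfies $\|P_k e^{s\Delta}\bm{\psi}(0)\|_G \lesssim \langle 2^{2k}s\rangle^{-N}\|P_k\bm{\psi}(0)\|_G \lesssim 2^{-\sigma k}\langle 2^{2k}s\rangle^{-4} b_k(\sigma)$ for any $N$ by standard frequency-localised heat kernel decay (the $G$-norm acts in $(t,x)$ while the heat flow commutes with the Schr\"odinger evolution), which already matches the desired bound.

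For the nonlinear part I would split $r \in [2^{2j-1}, 2^{2j+1}]$ dyadically for $j \le j_0$ (where $s \in [2^{2j_0-1}, 2^{2j_0+1}]$) and apply \eqref{b5} to the $\bm{A}\cdot\pa_x\bm{\psi}$ terms, \eqref{b3} to $\pa_x\bm{A}\cdot\bm{\psi}$ (using \eqref{Acurl} to trade $\pa_x\bm{A}$ for a bilinear expression in $\bm{\psi}$), and \eqref{b7}, \eqref{b6} to the quadratic-in-$\bm{A}$ and cubic-in-$\bm{\psi}$ contributions. As in \eqref{Ab1}--\eqref{Ab2}, each dyadic block integrates to a factor $2^{2j}$ against the $\langle 2^{2k+2j}\rangle^{-7/2}$ decay, and the smoothing kernel $e^{-c 2^{2k}(s-r)}$ preserves the outer weight $\langle 2^{2k}s\rangle^{-4}$. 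Summing over $j$, each resulting term is dominated by $2^{-\sigma k}\langle 2^{2k}s\rangle^{-4} a_k(\sigma)\cdot\bigl(\sum_j a_{-j}^2\bigr)^{1/2}$, and the bootstrap assumption $a_k \le \varepsilon^{-1/4}b_k$ together with the $\varepsilon^{3/4}$-envelope condition on $b_k$ forces $\sum_j a_{-j}^2 \lesssim \varepsilon$. Combining linear and nonlinear pieces and taking the supremum in $s$ with the prescribed weight produces $a_k(\sigma) \lesssim b_k(\sigma) + \varepsilon^{1/2} a_k(\sigma)$, which closes the bootstrap in favor of the stated bound on $\bm{\psi}$.

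For the $\psi_t$ estimates I would start from the algebraic identity \eqref{psit}, namely
\[
\psi_t \;=\; i(\pa_1\psi_1 - \pa_2\psi_2) - A_1\psi_1 + A_2\psi_2 + \kappa\,\epsilon_{ij}\psi_l R_l R_i A_j,
\]
together with the improved $\bm{\psi}$ bound just established and \eqref{Ab}. The derivative term contributes $\lesssim 2^{-(\sigma-1)k}\langle 2^{2k}s\rangle^{-4}b_k(\sigma)$ for $\sigma \in [\frac15,\sigma_1]$; the $\bm{A}\cdot\bm{\psi}$ and $\bm{\psi}\cdot R\bm{A}$ terms are bounded by \eqref{b3} (the Riesz factors are absorbed by the $L$-notation), and the gain $a_{-j}^2 \lesssim \varepsilon$ there is absorbed into the main envelope $b_k(\sigma)$ via the sharper estimate \eqref{Bony24}. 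At the critical endpoint $\sigma = 0$ one cannot invoke \eqref{Bony24}, so instead one uses the cruder pointwise bound $\|P_k\bm{\psi}\|_G \lesssim b_k \lesssim \varepsilon^{3/4}$ on the derivative term and \eqref{b3} directly for the rest, which together deliver the weaker $2^k\langle 2^{2k}s\rangle^{-4}\varepsilon^{1/2}$ bound.

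The hard part will be the careful bookkeeping of the frequency envelope $a_{k,j}(\sigma)$ through the $\bm{A}$-dependent nonlinearities: the lower and upper bounds \eqref{low}--\eqref{up} must be invoked in the appropriate regimes so that the $\langle 2^{2k+2j}\rangle^{-7/2}$ decay survives uniformly in both subcases $k+j \ge 0$ and $k+j \le 0$, and so that the $\min\{2^k,2^{-j}\}$ factors in \eqref{b3}--\eqref{b7} actually combine to give an honest $\varepsilon^{1/2}$ gain after summation. The $\sigma \ge \tfrac15$ threshold in the improved $\psi_t$ bound is a direct reflection of the High-High gain available only in \eqref{Bony24}; at $\sigma = 0$ the cubic contribution genuinely forces the weaker $2^k\varepsilon^{1/2}$ estimate.
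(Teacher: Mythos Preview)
Your Duhamel treatment of $\bm{\psi}$ via the heat equation is essentially the paper's argument, and the bookkeeping you outline (dyadic splitting in $r$, bounds \eqref{b3}, \eqref{b5}, \eqref{b7}, and the gain $\sum_j a_{-j}^2 \lesssim \varepsilon$ from the bootstrap hypotheses) closes exactly as you indicate, producing $a_k(\sigma)\lesssim b_k(\sigma)+\varepsilon\, a_k(\sigma)$.

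The $\psi_t$ part, however, has a genuine gap. The algebraic identity \eqref{psit} is derived from the Ishimori equation and therefore holds only at heat time $s=0$: for $s>0$ the extended field $\widetilde{S}(s,\cdot,\cdot)$ is produced by the covariant heat flow \eqref{heat} and does \emph{not} satisfy \eqref{ishimori} in $t$, so no relation of the form $\psi_t(s)=i\mu_l D_l\psi_l(s)+\kappa\,\psi_l(s)\pa_l\phi(s)$ is available. Consequently you cannot read off $\|P_k\psi_t(s)\|_G$ for $s>0$ from the improved $\bm{\psi}(s)$ bound together with \eqref{b3}. The paper instead runs the same Duhamel argument on the heat equation $(\pa_s-\Delta_x)\psi_t=K(\psi_t)$, where $K(\psi_t)$ is linear in $\psi_t$ with coefficients of schematic type $\pa_x\bm{A}$, $\bm{A}\cdot\bm{A}$, $\bm{\psi}\cdot\bm{\psi}$; this requires a second internal bootstrap (a constant $B_2\ge 1$ for the bare bound $\|P_k\psi_t(s)\|_G\lesssim B_2\,\varepsilon^{1/2}2^k\langle 2^{2k}s\rangle^{-4}$), which is then closed via \eqref{Bony23}--\eqref{Bony24} and \eqref{b8} to give $B_2\lesssim 1+B_2\varepsilon$. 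Your identification of \eqref{Bony24} as the source of the $\sigma\ge\tfrac15$ threshold is correct, but the mechanism must pass through the heat equation for $\psi_t$ rather than through \eqref{psit}.
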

\begin{proof}
	We use the heat equation for $\psi_\alpha$:
	\begin{equation}
		\begin{aligned}
		&  (\pa_s-\Delta_x)\psi_\alpha = K(\psi_\alpha),\\
		\text{where}&~ K(\psi):= 2i A_l \pa_l\psi- (A_l^2 -i\pa_lA_l) \psi + i \Im(\psi \overline{\psi_l})\psi_l ,
		\end{aligned}
	\end{equation}
which gives the Duhamel formula
\begin{equation}\label{heatformula}
  \psi_\alpha(s) = e^{s\Delta_x}\psi_\alpha(0) + \int_0^s e^{(s-r)\Delta_x}K(\psi_\alpha)\mathrm{d}r.
\end{equation}

For $\bm{\psi}$, we rewrite $K(\bm{\psi})$ as
\begin{equation}
  K(\bm{\psi})   =  \pa_x(\bm{A} \bm{\psi}) + \bm{A}\pa_x\bm{\psi} + (\bm{A}\cdot \bm{A} + \bm{\psi}\cdot \bm{\psi})\psi.
\end{equation} 

Using \eqref{b3}, \eqref{b5}, \eqref{b7} and the bootstrap assumption $\sum_k a_k^2 \le \varepsilon$, for $r \in [2^{2j-1},2^{2j+1}]$ we obtain
\begin{align}\label{Kpsi-bound}
	  &\|P_k K(\bm{\psi})(s)\|_G \lesssim   2^{-\sigma k} (2^{-2j}a_{-j}^2 +  \varepsilon 2^{k-j}\langle 2^{-\frac{k+j}{2}}  \rangle) \langle 2^{2k+2j}  \rangle^{-\frac{7}{2}}  a_{k}(\sigma).
\end{align}

 Assume $s\in [2^{2k_0-1},2^{2k_0+1}]$. For $k+k_0\ge 0$, splitting the time integral at $s/2$, we have
 \begin{align}
	 &\Big\|\int_0^s e^{(s-r)\Delta_x}P_k K(\psi_\alpha)\mathrm{d}r\Big\|_{G}\notag\\
	 &\lesssim \int_{0}^{\frac{s}{2}}\langle 2^{2k}s \rangle^{-N}\|P_k K(\psi_\alpha)(r)\|_G\mathrm{d}r + \int_{\frac{s}{2}}^s \langle 2^{2k}(s-r) \rangle^{-N}\|P_k K(\psi_\alpha)(r)\|_G\mathrm{d}r\notag\\
	 &\lesssim   2^{-\sigma k}\langle 2^{2k}s \rangle^{-N} a_k(\sigma)\sum_{j\le k_0} (a_{-j}^2+ \varepsilon 2^{k+j}\langle 2^{-\frac{k+j}{2}} \rangle)\langle 2^{2k+2j}  \rangle^{-\frac{7}{2}}\notag\\
	 &\quad +   \varepsilon 2^{-\sigma k} (2^{-2k-2k_0} + 2^{-k-k_0} ) \langle 2^{2k+2k_0}  \rangle^{-\frac{7}{2}}  a_{k}(\sigma)\notag \\
	 &\lesssim \varepsilon 2^{-\sigma k}\langle 2^{2k+2k_0}  \rangle^{-4}a_k(\sigma).
\end{align}
 For $k+k_0\le 0$, we integrate directly
\begin{align}
	 &\Big\|\int_0^s e^{(s-r)\Delta_x}P_k K(\psi_\alpha)\mathrm{d}r\Big\|_{G} \lesssim \sum_{j\le k_0}\int_{2^{2j-1}}^{2^{2j+1}} \|P_k K(\psi_\alpha)(r)\|_G\mathrm{d}r  \notag\\
	 &\lesssim 2^{-\sigma k}a_{k}(\sigma) \sum_{j\le  k_0}( a_{-j}^2 +  \varepsilon 2^{\frac{k+j}{2}} )  \lesssim \varepsilon 2^{-\sigma k}a_{k}(\sigma).
\end{align}

 Combining both cases  yields
\begin{equation}
  \Big\|\int_0^s e^{(s-r)\Delta_x}P_k K(\psi_\alpha)\mathrm{d}r\Big\|_{G}\lesssim \varepsilon 2^{-\sigma k}\langle 2^{2k }s  \rangle^{-4}a_k(\sigma).
\end{equation}

From \eqref{heatformula} and the definition of $b_k(\sigma)$,
\begin{align}
	\|P_k \bm{\psi}(s)\|_{G}&\lesssim \|e^{s\Delta_x}P_k \bm{\psi}(0)\|_G + \Big\|\int_0^s e^{(s-r)\Delta_x}P_k K(\bm{\psi})\mathrm{d}r\Big\|_{G}\notag\\
  &\lesssim 2^{-\sigma k}( \langle 2^{2k}s \rangle^{-N}b_{k}(\sigma) + \varepsilon \langle 2^{2k}s \rangle^{-4}a_{k}(\sigma) )\notag\\
  &\lesssim  2^{-\sigma k} \langle 2^{2k}s \rangle^{-4} (b_k(\sigma) + \varepsilon a_k(\sigma)).\label{psi-b}
 \end{align}

For $\psi_t(s)$, let $B_2$ be the smallest number in $[1,\infty)$ such that
\begin{equation}\label{psit-b2}
  \|\psi_t(s)\|_G \lesssim B_2 \varepsilon^{\frac{1}{2}} 2^k\langle 2^{2k} s \rangle^{-4}, 
\end{equation}

we rewrite $K(\psi_t)$ as 
\begin{equation}
   K(\psi_t)   =  \pa_x(\bm{A} \psi_t) + [\pa_x\bm{A} + (\bm{A}\cdot \bm{A} + \bm{\psi}\cdot \bm{\psi})]\psi_t.
\end{equation}

Using  \eqref{A-bound}, \eqref{up}, \eqref{b1}, \eqref{b4} and the slow varying of $a_k(\sigma)$, we have 

  \begin{align}
  \|P_k(\bm{A}(s)\cdot\bm{A}(s) ,\ \bm{\psi}(s)\cdot\bm{\psi}(s),\ \pa_x \bm{A}(s) ) \|_G &\lesssim 2^{-j}   \langle 2^{2k} s \rangle^{-\frac{7}{2}}a_{-j}^2  .\label{psit-b1}\\
 \text{or} &\lesssim 2^{-\sigma k}2^{-j}  \langle 2^{2k} s \rangle^{-\frac{7}{2}}    a_{-j} a_{k}(\sigma)\label{psit-b11}
  \end{align}

Applying \eqref{Bony23} to \eqref{psit-b2} and \eqref{psit-b1} gives 
 \begin{equation}
  \|P_k [(\bm{A}(s)\cdot\bm{A}(s) ,\ \bm{\psi}(s)\cdot\bm{\psi}(s),\ \pa_x \bm{A}(s) )\cdot \psi_t(s) ]\| \lesssim B_2 \varepsilon^{\frac{1}{2}}  2^k 2^{-2j} \langle 2^{2k} s \rangle^{-\frac{7}{2}} a_{-j}^2.
 \end{equation}

For $\sigma \ge \frac{1}{5}$, applying  \eqref{Bony24} to \eqref{psit-bound} and \eqref{psit-b11} gives
\begin{equation}
  \begin{aligned}
  &\|P_k [(\bm{A}(s)\cdot\bm{A}(s) ,\ \bm{\psi}(s)\cdot\bm{\psi}(s),\ \pa_x \bm{A}(s) )\cdot \psi_t(s)] \| \\
  &\lesssim  2^{-(\sigma-1)k} 2^{-2j} \langle 2^{2k} s \rangle^{-\frac{7}{2}} a_{-j}^2 a_{k}(\sigma) .
  \end{aligned}
\end{equation}

Combining this with \eqref{b8} yields
\begin{equation}
  	\begin{aligned}
		\|P_k K(\psi_t)\|&\lesssim 2^{-(\sigma-1)k} (2^{-2j}a_{-j}^2 +  \varepsilon 2^{k-j}\langle 2^{-\frac{k+j}{2}}  \rangle)\langle 2^{2k} s \rangle^{-\frac{7}{2}} a_{k}(\sigma),\ \text{for}\ \sigma\ge  \frac{1}{5}\\
		 \text{or}\ &\lesssim B_2 \varepsilon^{\frac{1}{2}}  2^k (2^{-2j}a_{-j}^2 +   2^{k-j}\langle 2^{-\frac{k+j}{2}}  \rangle) \langle 2^{2k} s \rangle^{-\frac{7}{2}} ,
	\end{aligned}
\end{equation}
which takes the similar form as \eqref{Kpsi-bound}.  

Repeating the argument used for $\bm{\psi}$ gives
\begin{equation}
	\begin{aligned}
	  \|P_k\psi_t(s)\|_G &\lesssim  2^{-(\sigma-1) k} \langle 2^{2k}s \rangle^{-4} (b_k(\sigma) + \varepsilon a_k(\sigma)),\ \text{for}\ \sigma \ge  \frac{1}{5}, \\
	  \text{or} &\lesssim  (1 + B_2\varepsilon)2^{ k} \langle 2^{2k}s \rangle^{-4}  \varepsilon^{\frac{1}{2}} .
	\end{aligned}\label{psit-b}
\end{equation}

By definition \eqref{afre}, \eqref{psit-b} and the above imply $a_k(\sigma)\lesssim b_k(\sigma) + \varepsilon a_k(\sigma),\ B_2 \lesssim (1 + B_2\varepsilon)$, hence  $a_k(\sigma)\lesssim b_k(\sigma),\ B_2 \lesssim 1$ since $\varepsilon$ is sufficiently small.  This completes the proof.
\end{proof}

Finally, we establish bilinear estimates involving $\bm{\psi}(s)$.

\begin{proposition} Under the bootstrap assumptions in Proposition \ref{heatBA} we have
\begin{equation}\label{psis-bilinear1}
	\begin{aligned}
	  &\sup_y \Big\|\|P_{k} \bm{\psi}(s)^y\|_{L^2_{\widehat{x}_m}} \|\Xi_m(D)P_{k'} w\|_{L^2_{\widehat{x}_m}}\Big\|_{L^2_{t,x_m}}\\
	  &\ \lesssim 2^{-\sigma k}2^{-\frac{k'}{2}}\langle 2^{2k}s \rangle^{-3} b_k(\sigma) \mathcal{D}(P_{k'}w).\ \text{for}\ k'\ge k + 100,
	\end{aligned}
\end{equation}
and 
\begin{equation}\label{psis-bilinear2}
	\begin{aligned}
	  &\sup_y \|P_{k}\bm{\psi}(s)P_{k'} w^y\|_{L^2_{t,x}}\\
	  &\ \lesssim 2^{-\sigma k}2^{\frac{k'-k}{2}}\langle 2^{2k}s \rangle^{-3} b_k(\sigma) \mathcal{D}(P_{k'}w).\ \text{for}\ k\ge k'-100.
	\end{aligned}
\end{equation}
Moreover, we have 
	\begin{align}\label{psis-main}
	 &\sup_y \|P_{k}\bm{\psi}(s) P_{k'}w^y\|_{L^2_{t,x}}\lesssim 2^{-\frac{|k -k'|}{2}} 2^{-\sigma k} \langle 2^{2k}s \rangle^{-3} b_k(\sigma)\mathcal{D}(P_{k'}w),\\
	 &\label{psis-main-2}\sup_y \|P_{k}D_x \bm{\psi}(s) P_{k'}w^y\|_{L^2_{t,x}} \lesssim 2^{-\frac{|k -k'|}{2}} 2^{-(\sigma-1)k} (2^{2k}s)^{-\frac{1}{4}}\langle 2^{2k}s \rangle^{-3}b_{k}(\sigma)\mathcal{D}(P_{k'}w).  
	\end{align}
\end{proposition}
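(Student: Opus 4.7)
The plan is to reduce all four estimates to the single master bound
$\mathcal{D}(P_k\bm{\psi}(s)) \lesssim 2^{-\sigma k}\langle 2^{2k}s\rangle^{-3}b_k(\sigma)$,
from which the bilinear conclusions will follow through Propositions \ref{bilinear1} and \ref{bilinear2}. The $G$-norm component of $\mathcal{D}$ is already controlled by the preceding proposition, with one power of $\langle 2^{2k}s\rangle^{-1}$ sacrificed as slack. For the interaction component
$I(P_k\bm{\psi}(s), P_k N(s)) = \sup_y\|P_k\bm{\psi}(s)^y \cdot P_k N(s)\|_{L^1_{t,x}}$,
where $N(s) = (i\partial_t + \mu_l\partial_l^2)\bm{\psi}(s)$ is the Schr\"odinger-flow nonlinearity from \eqref{psi} evaluated at heat-flow time $s$, I would expand $N(s)$ into its constituents---$\bm{A}\cdot\partial_x\bm{\psi}$, $(\bm{A}\cdot\bm{A} + A_t + \partial_x\bm{A})\bm{\psi}$, $\bm{\psi}\cdot\bm{\psi}\cdot\bm{\psi}$, and the $\kappa$-corrections involving $\epsilon_{ij}R_lR_iA_j$---and bound each pairing term-by-term, using Cauchy--Schwarz in combination with the bilinear estimate \eqref{bilinear} and the heat-flow product bounds \eqref{b1}--\eqref{b8}.

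Granted the master bound, \eqref{psis-main} is immediate from Proposition \ref{bilinear2} with $u = \bm{\psi}(s)$ and $v = w$. The refined estimate \eqref{psis-bilinear1}, lying in the separated-scales regime $k' \ge k+100 > k+80$, comes from applying Proposition \ref{bilinear1} directly, which supplies the sharper $2^{-k'/2}$ factor in place of $2^{-|k-k'|/2}$. For \eqref{psis-bilinear2}, in the regime $k \ge k'-100$, the factor $2^{(k'-k)/2}$ is at worst a harmless constant and is beneficial when $k'\ll k$; a direct H\"older estimate $\|P_k\bm{\psi}(s)\|_{L^4_{t,x}}\|P_{k'}w\|_{L^4_{t,x}}$ combined with the embedding $G\hookrightarrow L^4_{t,x}$ and the trivial $\|P_{k'}w\|_G\le\mathcal{D}(P_{k'}w)$ then suffices.

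For \eqref{psis-main-2}, I would split $D_x\bm{\psi}(s) = \partial_x\bm{\psi}(s) + iA\cdot\bm{\psi}(s)$. The algebraic piece is absorbed into \eqref{psis-main} after applying \eqref{b3} to $\mathcal{D}(P_k(A\cdot\bm{\psi}(s)))$. The derivative piece is handled by the analog of the master bound for $P_k\partial_x\bm{\psi}(s)$, which carries an extra factor of $2^k$ together with the smoothing weight $\langle(2^{2k}s)^{-1/8}\rangle$ inherited from the product estimate \eqref{b2}; this weight, when passed twice through the bilinear Strichartz framework (once on each factor of the pairing defining $\mathcal{D}$), effectively produces the $(2^{2k}s)^{-1/4}$ appearing in the statement.

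The principal technical obstacle is the derivative-bearing term $\bm{A}\cdot\partial_x\bm{\psi}$ inside $N(s)$: controlling its contribution to $I$ demands a careful Bony paraproduct decomposition so that each of the Low--High, High--Low, and High--High frequency configurations can be treated by Proposition \ref{bilinear2} with the correct heat-flow decay drawn from \eqref{A-bound} and \eqref{psi-bound}. Tracking these decay factors so that the final $s$-weight is exactly $\langle 2^{2k}s\rangle^{-3}$---no better, no worse---is the delicate point, and the High--High interaction at frequencies well above $2^k$ must be tamed via the slow-variation property of the envelope $b_k(\sigma)$.
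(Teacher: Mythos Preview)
Your master-bound strategy has a structural gap at its very first step. You assert that $N(s) = (i\partial_t + \mu_l\partial_l^2)\bm{\psi}(s)$ ``is the Schr\"odinger-flow nonlinearity from \eqref{psi} evaluated at heat-flow time $s$,'' but equation \eqref{psi} is derived from the Ishimori relation \eqref{psit}, namely $\psi_t = i\mu_l D_l\psi_l + \kappa\psi_l\partial_l\phi$, which expresses the original Ishimori PDE and is only valid at $s=0$. For $s>0$ the extension $\widetilde{S}(s,\cdot,\cdot)$ is produced by the harmonic-map heat flow, which does not commute with the Ishimori evolution; consequently $\psi_t(s)$ is determined by its own heat equation, not by \eqref{psit}. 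Using only the curl relation $D_t\psi_m = D_m\psi_t$ one obtains
\[
(i\partial_t + \mu_l\partial_l^2)\psi_m(s) = A_t\psi_m + iD_m\psi_t(s) + \mu_l\partial_l^2\psi_m(s),
\]
and without the Ishimori identity the second-order piece $\mu_l\partial_l^2\psi_m(s)$ has nothing to cancel against. This uncancelled top-order term cannot be absorbed into the interaction integral $I(P_k\bm{\psi}(s),P_kN(s))$ by the perturbative bounds \eqref{b1}--\eqref{b8}, so the master bound $\mathcal{D}(P_k\bm{\psi}(s))\lesssim 2^{-\sigma k}\langle 2^{2k}s\rangle^{-3}b_k(\sigma)$ is not accessible this way.

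The paper avoids this obstruction entirely by never forming $\mathcal{D}(P_k\bm{\psi}(s))$ for $s>0$. Instead it inserts the heat Duhamel formula \eqref{heatformula} for $\bm{\psi}(s)$ directly into the bilinear quantity of \eqref{psis-bilinear1}. The homogeneous piece $e^{s\Delta}\bm{\psi}(0)$ reduces, via the $L$-notation and the heat decay $\langle 2^{2k}s\rangle^{-N}$, to Proposition \ref{bilinear1} applied to $\bm{\psi}(0)$ and $w$; here the bootstrap hypothesis $\mathcal{D}(P_k\bm{\psi})\le 2^{-\sigma k}b_k(\sigma)$ (which \emph{is} at $s=0$, where \eqref{psi} holds) is legitimately invoked. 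The Duhamel integral is decomposed as $\sum_{k_1,k_2}P_kL(P_{k_1}Q,P_{k_2}\bm{\psi}(r))$ with $Q$ built from the \emph{heat} nonlinearity $K(\bm{\psi})$, and closed by a secondary bootstrap (constant $B_3$) that feeds the sought-for bilinear bound on $P_{k_2}\bm{\psi}(r)$ back into itself. The same mechanism handles \eqref{psis-bilinear2}. Your account of the $(2^{2k}s)^{-1/4}$ weight in \eqref{psis-main-2} is also off: it does not arise from squaring a $\langle(2^{2k}s)^{-1/8}\rangle$ inherited by $\partial_x\bm{\psi}$, but from the $\bm{A}\cdot\bm{\psi}$ part of $D_x\bm{\psi}$, where the small-$s$ singularity of $\bm{A}$ (via \eqref{A-bound}, \eqref{up}) is what produces it; the $\partial_x\bm{\psi}$ piece by itself carries no such singular factor.
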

\begin{proof}
For \eqref{psis-bilinear1}, Let $B_3$ be the smallest number in $[1,\infty)$ such that 
\begin{equation} 
	\begin{aligned}
	  &\sup_y \Big\|\|P_{k} \bm{\psi}(s)^y\|_{L^2_{\widehat{x}_m}} \|\Xi_m(D)P_{k'} w\|_{L^2_{\widehat{x}_m}}\Big\|_{L^2_{t,x_m}}\\
	  &\ \lesssim  B_3 2^{-\sigma k}2^{-\frac{k'}{2}}\langle 2^{2k}s \rangle^{-3} b_k(\sigma) \mathcal{D}(P_{k'}w).\ \text{for}\ k'\ge k + 100,
	\end{aligned}
\end{equation}

we use \eqref{heatformula} to expand
\begin{align}
	&\sup_y \Big\|\|P_{k} \bm{\psi}(s)^y\|_{L^2_{\widehat{x}_m}} \|\Xi_m(D)P_{k'} w\|_{L^2_{\widehat{x}_m}}\Big\|_{L^2_{t,x_m}}\notag\\
	& = \sup_y \Big\|\|P_{k} e^{s\Delta_x}\bm{\psi}(0)^y\|_{L^2_{\widehat{x}_m}} \|\Xi_m(D)P_{k'} w\|_{L^2_{\widehat{x}_m}}\Big\|_{L^2_{t,x_m}} \notag\\
	&\quad + \int_0^s \Big\|\|P_{k} e^{(s-r)\Delta_x}K(\bm{\psi})(r)^y\|_{L^2_{\widehat{x}_m}} \|\Xi_m(D)P_{k'} w\|_{L^2_{\widehat{x}_m}}\Big\|_{L^2_{t,x_m}} \notag\\
	&\lesssim \langle 2^{2k}s \rangle^{-N} \sup_y \Big\|\|P_{k}  \bm{\psi}(0)^y\|_{L^2_{\widehat{x}_m}} \|\Xi_m(D)P_{k'} w\|_{L^2_{\widehat{x}_m}}\Big\|_{L^2_{t,x_m}}\notag\\
	&\quad +  \int_0^s \langle 2^{2k}(s-r) \rangle^{-N} \sup_y  \Big\|\|P_{k}  K(\bm{\psi})(r)^y\|_{L^2_{\widehat{x}_m}}\|\Xi_m(D)P_{k'} w\|_{L^2_{\widehat{x}_m}}\Big\|_{L^2_{t,x_m}}.\label{K-integral}
\end{align}
The first term is controlled by bootstrap assumptions and Proposition \ref{bilinear1}. For the second, decompose $K(\bm{\psi})$ as
\begin{equation}
  P_k K(\bm{\psi}) = \sum_{k_1,k_2} P_k L(P_{k_1}Q,P_{k_2}\bm{\psi}),\ \text{where}\ Q = (2^{k}\bm{A},\pa_x \bm{A}, \bm{A}\cdot \bm{A}, \bm{\psi}\cdot\bm{\psi}). 
\end{equation}

Using \eqref{A-bound}, \eqref{b1}, \eqref{b4} and $\sum a_k^2\le\varepsilon$, we have 
\begin{equation}
  \|P_{k_1}Q(s)\|_{G}\lesssim \varepsilon^{\frac{1}{2}}2^{\max\{k,k_1\}}2^{-\sigma k_1} (2^{2k_1}s)^{-\frac{1}{8}}\langle 2^{2k_1}s \rangle^{-3} b_{k_1}(\sigma).
\end{equation}

Denote by $k_{\min}, k_{\text{med}}$ and $k_{\max}$ the minimum, median and maximum of the tuple $(k_1,k_2,k)$ respectively. For summation over $|k_{\text{med}}-k_{\max}|\le 4$ and $k'\ge  k_2 + 100$,  we have
\begin{align}
	&\eqref{K-integral}\lesssim \sum_{k_1,k_2}2^{\frac{k_{\min}}{2}}\int_0^s \langle 2^{2k}(s-r) \rangle^{-N}\notag\\
	&\qquad\qquad\qquad\  \|P_{k_1}Q(r)\|_{L^\infty_{t,x_m}L^2_{\widehat{x}_m}}\sup_y  \Big\|\| P_{k_2}\bm{\psi}(r)^y\|_{L^2_{\widehat{x}_m}}\|\Xi_m(D)P_{k'} w\|_{L^2_{\widehat{x}_m}}\Big\|_{L^2_{t,x_m}}\mathrm{d}r \notag\\
	&\lesssim B_3 2^{-\frac{k'}{2}}\mathcal{D}(P_{k'}w)\sum_{k_1,k_2}2^{\max\{k,k_1\}}2^{\frac{k_1 + k_{\min}}{2}}2^{-\sigma \max\{k_1,k_2\}}b_{\min\{k_1,k_2\}} b_{\max\{k_1,k_2\}}(\sigma)\notag\\
	&\qquad\  \int_{0}^s \langle 2^{2k}(s-r) \rangle^{-N} (2^{2k_1}r)^{-\frac{1}{8}}\langle 2^{2k_1}r \rangle^{-3}\langle 2^{2k_2}r \rangle^{-3}\mathrm{d}r\notag\\
	&\lesssim B_3 2^{-\sigma k }2^{-\frac{k'}{2}}\mathcal{D}(P_{k'}w)\sum_{k_1,k_2}2^{\max\{k,k_1\}}2^{\frac{k_1 + k_{\min}}{2}}b_{\min\{k_1,k_2\}}b_{\max\{k_1,k_2\}}(\sigma)\notag\\
	&\qquad \langle 2^{2k}s \rangle^{-3}\int_{0}^s \langle 2^{2k}(s-r) \rangle^{3-N} (2^{2k_1}r)^{-\frac{1}{8}}\langle 2^{2\min\{k_1,k_2\}}r \rangle^{-3} \mathrm{d}r\notag\\
	&\lesssim B_3 2^{-\frac{k'}{2}} 2^{-\sigma k }\langle 2^{2k}s \rangle^{-3}\mathcal{D}(P_{k'}w) \sum_{k_1,k_2} 2^{\frac{k_{\min}-k_{\max}}{2}}b_{\min\{k_1,k_2\}}b_{\max\{k_1,k_2\}}(\sigma) \notag\\
	&\lesssim B_3 2^{-\frac{k'}{2}} 2^{-\sigma k }\langle 2^{2k}s \rangle^{-3}\mathcal{D}(P_{k'}w) b_k b_k(\sigma)\lesssim B_3 \varepsilon^{\frac{1}{2}}2^{-\frac{k'}{2}} 2^{-\sigma k }\langle 2^{2k}s \rangle^{-3}\mathcal{D}(P_{k'}w)  b_k(\sigma),\label{com1}
\end{align}
where we use  the fact that $\max\{k_1,k_2\} \ge  k-4$ and $\max\{\min\{k_1,k_2\},k\}\ge  k_{\max}-4$. 

For summmation over $|k_{\text{med}}-k_{\max}|\le 4$ and $k_2\ge k'- 100\ge k$, we have $|k_1-k_2|\le 10$ and thus  
\begin{align}
	&\eqref{K-integral}\lesssim \sum_{k_1,k_2 } \int_0^s \langle 2^{2k}(s-r) \rangle^{-N}\notag\\
 &\qquad\qquad\qquad\  \|\Xi_m(D)P_{k'} w\|_{L^\infty_{t}L^2_x}  2^{\frac{k}{2}} \| P_{k_2}\bm{\psi}(r) \|_{G}\|P_{k_1}Q(r)\|_{G} \mathrm{d}r \notag\\
	&\lesssim \sum_{k_2 \ge  k'-100} \int_0^s \langle 2^{2k}(s-r) \rangle^{-N} 2^{\frac{k}{2}+ k_2}2^{-\sigma k_2} \mathcal{D}(P_{k'}w)b_{k_2}b_{k_2}(\sigma) (2^{2k_2}r)^{-\frac{1}{8}}\langle 2^{2k_2}r \rangle^{-6} \mathrm{d} r \notag\\
	&\lesssim  \langle 2^{2k}s \rangle^{-3} 2^{-\frac{k'}{2}}2^{-\sigma k'} \mathcal{D}(P_{k'}w)2^{\frac{k-k'}{2}}b_{k'}b_{k'}(\sigma) \notag\\
	&\lesssim \langle 2^{2k}s \rangle^{-3} 2^{-\frac{k'}{2}}2^{-\sigma k'} \mathcal{D}(P_{k'}w) b_{k}(\sigma).
\end{align}	
Combining this with \eqref{K-integral} and \eqref{com1} gives $B_3\lesssim 1+ \varepsilon^{\frac{1}{2}}B_3$ and hence $B_3\lesssim 1$. This gives the bound \eqref{psis-bilinear1}.

Next we prove \eqref{psis-bilinear2}. We again use the formula \eqref{heatformula} and repeat the arguments in \eqref{K-integral}. It remains to control
\begin{align}
	&\int_0^s \langle 2^{2k}(s-r) \rangle^{-N}\sup_{y}\|P_{k}K(\psi)(r)P_{k'}w^{y}\|_{L^2_{t,x}}\mathrm{d}r\notag\\
	&\lesssim \|P_{k'}w\|_{G}\sum_{k_1,k_2}\int_0^s  \langle 2^{2k}(s-r) \rangle^{-N} 2^{\min\{k',k_{\min}\}}\sup_{y}\|P_k L(P_{k_1}Q(r),P_{k_2}\bm{\psi}(r)) \|_{L^2_{t,x}}\mathrm{d}r\notag\\
	&\lesssim  \mathcal{D}(P_{k'}w)\sum_{k_1,k_2} 2^{\min\{k',k_{\min}\}}2^{\max\{k_1,k\}}\notag\\
	&\qquad\  \int_{0}^s \langle 2^{2k}(s-r) \rangle^{-N} (2^{2k_1}r)^{-\frac{1}{8}}\langle 2^{2k_1}r \rangle^{-3}\langle 2^{2k_2}r \rangle^{-4} 2^{-\sigma \max\{k_1,k_2\}}b_{\min\{k_1,k_2\}}b_{\max\{k_1,k_2\}}(\sigma)\mathrm{d}r\notag\\
	&\lesssim  \mathcal{D}(P_{k'}w)2^{-\sigma k} \langle 2^{2k}s \rangle^{-3}\sum_{k_1,k_2} 2^{\min\{k',k_{\min}\} - k_{\max}} b_{\min\{k_1,k_2\}}b_{\max\{k_1,k_2\}}(\sigma)\notag\\
	&\lesssim  \mathcal{D}(P_{k'}w)2^{-\sigma k} 2^{\frac{k'-k}{2}}\langle 2^{2k}s \rangle^{-3}b_{k}b_k(\sigma),\notag
\end{align} 
where we bound the integral as in \eqref{com1}. This gives \eqref{psis-bilinear2}.

It follows from \eqref{psis-bilinear1} and \eqref{psis-bilinear2} that
	\begin{align} 
	 &\sup_y \|P_{k}\bm{\psi}(s) P_{k'}w^y\|_{L^2_{t,x}}\lesssim 2^{-\frac{|k -k'|}{2}} 2^{-\sigma k} \langle 2^{2k}s \rangle^{-3} b_k(\sigma)\mathcal{D}(P_{k'}w),\\
	 & \sup_y \|P_{k}\pa_x \bm{\psi}(s) P_{k'}w^y\|_{L^2_{t,x}} \lesssim 2^{-\frac{|k -k'|}{2}} 2^{-(\sigma-1)k}  \langle 2^{2k}s \rangle^{-3}b_{k}(\sigma)\mathcal{D}(P_{k'}w), 
	\end{align}
which gives the bound \eqref{psis-main} and  \eqref{psis-main-2} once we prove that 
\begin{equation}
  \sup_y \|P_{k}(\bm{A}(s) \bm{\psi}(s)) P_{k'}w^y\|_{L^2_{t,x}} \lesssim 2^{-\frac{|k -k'|}{2}} 2^{-(\sigma-1)k} (2^{2k}s)^{-\frac{1}{4}}\langle 2^{2k}s \rangle^{-3}b_{k}(\sigma)\mathcal{D}(P_{k'}w).
\end{equation}

If $k\le  k'$, we apply Bony calculus to the product $P_{k}(\bm{A}(s) \bm{\psi}(s))$ and obtain
\begin{align}
	&\sup_y \|P_{k}(\bm{A}(s) \bm{\psi}(s)) P_{k'}w^y\|_{L^2_{t,x}}\label{App}\\
	& \lesssim \sum_{k_1,k_2,m} \big\| \|P_{k}(P_{k_1}\bm{A}(s) P_{k_2}\bm{\psi}(s)) \|_{L^\infty_{\hat{x}_m}} \|\Xi_m(D)P_{k'}w^y\|_{L^2_{\hat{x}_m}} \big\|_{L^2_{t,x_m}} \notag \\
	&\lesssim \sum_{k_1,k_2,m}  2^{\frac{1}{2}(\min\{k,k_1,k_2\} + k)} \|P_{k_1}\bm{A}(s) \|_{L^\infty_{t,x_m}L^2_{\hat{x}_m}} \big\| \| P_{k_2}\bm{\psi}(s)  \|_{L^2_{\hat{x}_m}} \|\Xi_m(D)P_{k'}w^y\|_{L^2_{\hat{x}_m}} \big\|_{L^2_{t,x_m}}  \notag \\
	&\lesssim 2^{-\frac{k'}{2}} \sum_{k_1,k_2 }  2^{\frac{1}{2}(\min\{k,k_1,k_2\} + k)} 2^{\frac{k_1}{2}} 2^{-\sigma \max\{k_1,k_2\}}b_{\min\{k_1,k_2\}}b_{\max\{k_1,k_2\}}(\sigma)\mathcal{D}(P_{k'}w)\notag\\
	&\qquad \qquad \qquad (2^{2k_1}s)^{-\frac{1}{8}} \langle 2^{2k_1} s \rangle^{-\frac{13}{4}}\langle 2^{2k_2} s \rangle^{-4}, \notag
                                               \end{align}
where we use the following bound for $\bm{A}(s)$ derived from \eqref{A-bound} and \eqref{up}:
\begin{equation}
  \|P_{k_1}\bm{A}(s)\|_G\lesssim 2^{-\sigma k_1} (2^{2k_1}s)^{-\frac{1}{8}} \langle 2^{2k_1} s \rangle^{-\frac{13}{4}} b_{k_1}(\sigma).
\end{equation}

For the High-Low and Low-High interactions, we have 
\begin{align}
	&\eqref{App} \lesssim 2^{\frac{k-k'}{2}} 2^{-\sigma k}b_k(\sigma) 2^{\frac{k}{4}}s^{-\frac{1}{8}}\sum_{k_3\le  k} 2^{\frac{ k_3}{2}}   \langle 2^{2k_3} s \rangle^{-\frac{13}{4}}\langle 2^{2k} s \rangle^{-\frac{13}{4}} \mathcal{D}(P_{k'}w)\notag\\
	& \lesssim 2^{\frac{k-k'}{2}} 2^{-\sigma k}b_k(\sigma) 2^{\frac{3k}{4}}s^{-\frac{1}{8}}\langle 2^{2k} s \rangle^{-\frac{13}{4}} \mathcal{D}(P_{k'}w)\notag\\
	&\lesssim 2^{\frac{k-k'}{2}} 2^{-(\sigma - 1)k}b_k(\sigma) (2^{2k} s)^{-\frac{1}{8}} \langle 2^{2k} s \rangle^{-\frac{13}{4}}\mathcal{D}(P_{k'}w),   \notag
\end{align}
and for High-High interactions we have 
\begin{align}
		&\eqref{App} \lesssim 2^{\frac{k-k'}{2}} 2^{-\sigma k} 2^{\frac{k}{2}}s^{-\frac{1}{4}}\sum_{k_3\ge  k} (2^{2 k_3}s)^{\frac{1}{8}}   \langle 2^{2k_3} s \rangle^{-6} b_{k_3}(\sigma)\mathcal{D}(P_{k'}w)\notag\\
	&\lesssim 2^{\frac{k-k'}{2}} 2^{-(\sigma - 1)k}b_k(\sigma) (2^{2k} s)^{-\frac{1}{4}} \langle 2^{2k} s \rangle^{-3}\mathcal{D}(P_{k'}w).   \notag
\end{align}
Thus the bound for $k'\ge  k$ is consistent with the stated bound \eqref{psis-main-2}. 

If $k'\le  k$, we have 
\begin{align}
  &\sup_y \|P_{k}(\bm{A}(s) \bm{\psi}(s)) P_{k'}w^y\|_{L^2_{t,x}} \\
	& \lesssim \sum_{k_1,k_2 } 2^{\min\{k',k_1,k_2\}} \|P_{k_1}\bm{A}(s)\|_G \|P_{k_2} \bm{\psi}(s)\|_{G} \| P_{k'}w\|_{G} \notag\\
	&\lesssim \mathcal{D}(P_{k'}w) \sum_{k_1,k_2} 2^{\min\{k',k_1,k_2\}} 2^{-\sigma \max\{k_1,k_2\}}b_{\min\{k_1,k_2\}}b_{\max\{k_1,k_2\}}(\sigma) (2^{2k_1}s)^{-\frac{1}{8}} \langle 2^{2k_1} s \rangle^{-\frac{13}{4}}\langle 2^{2k_2} s \rangle^{-4} \notag \\
	&\lesssim 2^{-\sigma k } 2^{\frac{k'}{2}}  s^{-\frac{1}{4}}\mathcal{D}(P_{k'}w) \sum_{k_1,k_2} 2^{\frac{1}{8}(\min\{k,k_1,k_2\}-\max\{k,k_1,k_2\})}b_{\min\{k_1,k_2\}}b_{\max\{k_1,k_2\}}(\sigma) \notag\\
	&\qquad \qquad (2^{2k_1}s)^{\frac{1}{16}} (2^{2k_2}s)^{\frac{1}{16}}\langle 2^{2k_1} s \rangle^{-\frac{13}{4}}\langle 2^{2k_2} s \rangle^{-4} \notag\\
	&\lesssim 2^{-(\sigma-1)k} 2^{\frac{k'-k}{2}} (2^{2k}s)^{-\frac{1}{4}} \langle 2^{2k} s \rangle^{-3} b_{k}b_{k}(\sigma) \mathcal{D}(P_{k'}w), \notag
\end{align}
where we use the fact that $\max\{k_1,k_2\}\ge  k -4 $. This gives the stated bound when $k'\le  k$ and completes the proof.

\end{proof}

\section{Estimates for Ishimori equation}
We begin by recalling the nonlinear terms in the Ishimori system. Denote by 
\begin{align}
 \mathcal{N}_m & =  -2i \mu_l A_l \pa_l \psi_m +(A_t + \mu_l(A_l^2-i\pa_l A_l))\psi_m - i \mu_l\psi_l\Im (\psi_m\bar{\psi}_{l})  \notag\\
	 & \quad+  i \epsilon_{ij}\kappa(D_m\psi_l R_l  R_i A_j + \psi_l \pa_m(R_l  R_i A_j))  \notag\\
              & =  - i \mu_l A_l \pa_l \psi_m  -i\mu_l\pa_l (A_l\psi_m) + i \epsilon_{ij}\kappa\ \pa_m( \psi_l  R_l R_i  A_j)\notag\\
			  &\quad\  + (A_t + \mu_l A_l^2)\psi_m  + i  \epsilon_{ij}\kappa\ \psi_l   A_m R_l R_i A_j  - i \mu_l\psi_l\Im (\psi_m\bar{\psi}_{l})\notag\\
			  & =: \mathcal{N}_{m,1} + \mathcal{N}_{m,2}
\end{align}
 appearing in the Ishimori equation \eqref{psi}.  

\begin{lemma}
	Under the bootstrap assumptions in Proposition \ref{SchrodingerBA}, we have 
	\begin{align}
	&\sup_y  \|P_k( \bm{A},\bm{\psi} ) P_{k'}w^y \|_{L^2_{t,x}}\lesssim 2^{-\sigma k }2^{-\frac{|k-k'|}{2}}\mathcal{D}(P_{k'}w) b_{k}(\sigma),\label{Apsi-1}\\
	&\sup_{y}  \|P_{k }w^y P_{k}[(\bm{A}\cdot\bm{A},\bm{\psi}\cdot\bm{\psi}) \bm{\psi}]     \|_{L^1_{t,x}}\lesssim \varepsilon  2^{-\sigma k } \mathcal{D}(P_{k }w) b_k(\sigma),\label{A2psi}\\
	&\sup_{y}  \|P_{k }w^y P_{k}(A_t \bm{\psi})     \|_{L^1_{t,x}}\lesssim \varepsilon  2^{-\sigma k } \mathcal{D}(P_{k }w) b_k(\sigma). \label{Atpsi}
	\end{align}

	Using these bounds, we have 
	 \begin{equation}\label{N2}
   \sup_{y}\|P_{k}w^y P_k \mathcal{N}_{2,m}\|\lesssim \varepsilon2^{-\sigma k }b_k(\sigma)\mathcal{D}(P_k w).	
 \end{equation}
\end{lemma}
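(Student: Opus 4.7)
My strategy is to establish \eqref{Apsi-1} first as the core bilinear estimate, and then derive \eqref{A2psi}, \eqref{Atpsi}, and \eqref{N2} from it by combining with Bony paraproduct and the heat-flow bilinear estimates of Section 3. The $\bm{\psi}$-component of \eqref{Apsi-1} is immediate from \eqref{psis-main} at $s=0$, since $\langle 2^{2k}\cdot 0\rangle^{-3}=1$. For the $\bm{A}$-component, the integral representation \eqref{Aid} gives
\[
  P_k\bm{A}(0)\;=\;-\int_0^\infty P_k\,\Im\bigl(\overline{\bm\psi}(r)\,D_l\bm\psi(r)\bigr)\,dr,
\]
and I would apply a paraproduct decomposition to $\overline{\bm\psi}\,D_l\bm\psi$. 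In the low--high and high--low interactions I pair $P_{k'}w^y$ with the $D_l\bm\psi$ factor via \eqref{psis-main-2}, placing the non-derivative $\bm\psi$ factor into $L^\infty_t L^2_x$ using \eqref{gaugedecay}; in the high--high interaction I instead pair $w^y$ with the non-derivative factor via \eqref{psis-main} and absorb $D_l\bm\psi$ into $L^\infty_t L^2_x$, the derivative producing $2^{k_{\max}}$. In every branch the resulting $s$-integrand carries decay of type $(2^{2k}s)^{-1/4}\langle 2^{2k}s\rangle^{-3}$, which is integrable on $[0,\infty)$, and the paraproduct sum over frequencies closes using the slow variation of $b_k(\sigma)$ together with $\sum b_k^2\lesssim\varepsilon$, yielding the $2^{-|k-k'|/2}$ gain.

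For \eqref{A2psi} I Bony-decompose $P_k[(\bm A\cdot\bm A,\bm\psi\cdot\bm\psi)\bm\psi]$. When the outer single $\bm\psi$ is at the highest frequency, I pair $P_k w^y$ with it through \eqref{Apsi-1} and bound the remaining low-frequency quadratic block in $L^\infty_{t,x}$ via Bernstein, using the $s=0$ specialization of \eqref{b1} (and the $\bm A\cdot\bm A$ analogue \eqref{b4}); when the quadratic block dominates, I pair $P_k w^y$ with one of its factors via \eqref{Apsi-1} and route the remaining two factors through $L^\infty_{t,x}\cdot L^\infty_t L^2_x$. In the balanced high--high case the two spare factors are controlled by the Strichartz product $L^4_{t,x}\cdot L^4_{t,x}\hookrightarrow L^2_{t,x}$ built into $G$. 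Each branch produces a factor of $b_k(\sigma)\mathcal D(P_k w)$ times $\sum_{k'} b_{k'}^2\lesssim\varepsilon$.

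For \eqref{Atpsi} I first derive an integral representation for $A_t$ analogous to \eqref{Aid}: from $\pa_s A_t=\Im(\overline{\psi_t}\,D_l\psi_l)$ and the $s\to\infty$ decay,
\[
  A_t(0)\;=\;-\int_0^\infty \Im\bigl(\overline{\psi_t}(r)\,D_l\bm\psi(r)\bigr)\,dr,
\]
so $P_k(A_t\bm\psi)$ becomes a trilinear expression in $\psi_t$, $\bm\psi$, and $D_l\bm\psi$. Paraproduct-decomposing and pairing $P_k w^y$ with the highest-frequency $\bm\psi$-factor through \eqref{Apsi-1}, I estimate the remaining $\overline{\psi_t}\,D_l\bm\psi$ block in $L^1_s$ by H\"older, using the bootstrap bound $\|P_j\psi_t(r)\|_G\lesssim 2^j\langle 2^{2j}r\rangle^{-4}\varepsilon^{1/2}$ together with \eqref{psis-main-2} on $D_l\bm\psi$. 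The $\varepsilon^{1/2}$ from $\psi_t$ multiplies another $\varepsilon^{1/2}$ coming from $\bigl(\sum b_{j}^2\bigr)^{1/2}$, producing the asserted $\varepsilon$.

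The corollary \eqref{N2} is then mechanical: $\mathcal N_{m,2}=(A_t+\mu_l A_l^2)\psi_m+i\epsilon_{ij}\kappa\,\psi_l A_m R_lR_i A_j-i\mu_l\psi_l\,\Im(\psi_m\bar\psi_l)$, and each summand is either of the form $A_t\bm\psi$ covered by \eqref{Atpsi} or of the form $(\bm A\cdot\bm A,\bm\psi\cdot\bm\psi)\bm\psi$ covered by \eqref{A2psi}, the Riesz transforms $R_iR_l$ commuting with $P_k$ and absorbed transparently into the $L$-notation. The main obstacle is orchestrating the $s$-integration in the $\bm A$-part of \eqref{Apsi-1}: the derivative bound \eqref{psis-main-2} contributes only $(2^{2k}s)^{-1/4}\langle 2^{2k}s\rangle^{-3}$, so the distribution of $D_l$ between the two $\bm\psi$-factors in $\overline{\bm\psi}\,D_l\bm\psi$ must be chosen carefully in each paraproduct regime to keep the integrand integrable near $s=0$ while still delivering the $2^{-|k-k'|/2}$ off-diagonal decay uniformly across the three frequency configurations.
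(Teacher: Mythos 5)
Your overall strategy (establish \eqref{Apsi-1} as the workhorse, feed it through Bony decompositions and the heat-flow bilinear estimates to get the trilinear bounds, then assemble \eqref{N2}) is the same skeleton the paper uses, and the $\bm\psi$-component of \eqref{Apsi-1} and the assembly of \eqref{N2} are fine. However, there are two concrete gaps.

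\emph{The high--high interaction in \eqref{Apsi-1}.} You propose to pair $P_{k'}w^y$ with the non-derivative factor via \eqref{psis-main} and to ``absorb $D_l\bm\psi$ into $L^\infty_t L^2_x$.'' This does not close: the product $L^2_{t,x}\cdot L^\infty_t L^2_x$ lands in $L^2_t L^1_x$, and returning to $L^2_{t,x}$ costs a Bernstein factor $2^k$ that you have not accounted for. Worse, in the sub-regime where the two $\bm\psi$-frequencies $k_1\approx k_2$ are \emph{close to} $k'$ (which is unavoidable when $k'\ge k$), the gain $2^{-|k_1-k'|/2}$ from \eqref{psis-main} degenerates to $O(1)$ and does not supply the required $2^{-|k-k'|/2}$. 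The paper therefore splits high--high into two sub-regimes: when $|k_2-k'|\ge|k-k'|$ the recipe you describe (minus the $L^\infty_t L^2_x$ slip — it must be $L^\infty_{t,x}$) works; when $|k_2-k'|\le|k-k'|$ one instead places $P_{k'}w$ in $L^\infty_t L^2_x$, puts $P_k(P_{k_1}\bm\psi\cdot P_{k_2}D_x\bm\psi)$ in $L^2_t L^\infty_x$ via Bernstein at frequency $k$, and uses the heat-flow bilinear bound for the product of the two $\bm\psi$-factors — the off-diagonal gain then comes from summing $2^{k-k_3}$ over $k_3\ge\tfrac12(k+k')$, not from \eqref{psis-main}. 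You also omit the case $k'\le k$, in which the paper places the (now low-frequency) $w$ in $L^\infty_{t,x}$ and the two high-frequency $\bm\psi$-factors in $L^2_{t,x}$; this is not a symmetric re-reading of what you wrote.

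\emph{The $\sigma$-dependence in \eqref{Atpsi}.} You use only the bootstrap bound $\|P_j\psi_t(r)\|_G\lesssim 2^j\langle 2^{2j}r\rangle^{-4}\varepsilon^{1/2}$ and extract the $\sigma$-weight from the spare $\bm\psi$-factor. This works only for $\sigma$ small: matching $2^{-\sigma k_3}b_{k_3}(\sigma)$ to the required $2^{-\sigma k}b_k(\sigma)$ costs a factor $2^{(\sigma+\delta)|k-k_3|}$, and for $\sigma\gtrsim\tfrac12$ this overwhelms the $2^{-|k-k_3|/2}$ off-diagonal decay and the frequency sums diverge. The paper handles this with an explicit dichotomy: for $\sigma\le\tfrac15$ it proceeds as you do; for $\sigma\ge\tfrac15$ it replaces the $\varepsilon^{1/2}$-bound by the refined estimate $\|P_j\psi_t(s)\|_G\lesssim 2^{-(\sigma-1)j}\langle 2^{2j}s\rangle^{-4}b_j(\sigma)$ (the second alternative in \eqref{psitk}), so that the $\sigma$-weight is supplied by $\psi_t$ itself. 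Without this split your argument does not close for $\sigma$ in the upper part of $[0,\sigma_1]$.
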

\begin{proof}
	We begin with the proof of \eqref{Apsi-1}. 
	The bound for the $\bm{\psi}$ term
	\begin{equation}
	   \|P_k \bm{\psi}  P_{k'}w^y \|_{L^2_{t,x}}\lesssim 2^{-\sigma k }2^{-\frac{|k-k'|}{2}}\mathcal{D}(P_{k'}w) b_{k}(\sigma)
	\end{equation}
	follows directly from the bootstrap assumptions \eqref{SchrodingerBA} and the bilinear estimate \eqref{bilinear2}. 
	Thus it remains to estimate the product involving the connection coefficient $\bm{A}$. Using  the integral representation \eqref{Aid} for $\bm{A}$, we  need to control 
	\begin{equation}
	  \sup_y \|P_k \bm{A}   P_{k'}w^y \|_{L^2_{t,x}}\lesssim\sum_{k_1,k_2}\int_0^\infty \|P_{k}(P_{k_1}\bm{\psi}(s) P_{k_2}[D_x\bm{\psi}(s)]) P_{k'}w^y \|_{L^2_{t,x}} \mathrm{d}s.
	\end{equation}

    We consider two cases based on the relative sizes of $k$ and $k'$.

	If $k\le  k'$, for the Low-High and  High-Low interactions. where  $\max\{k_1,k_2\}\le k +4$, and for the High-High interactions in the regime $|k_2-k'|\ge  |k-k'|$, we use the bilinear estimate \eqref{psis-main-2} to obtain
	\begin{align}
	 & \sum_{k_1,k_2 }\int_0^\infty \|P_{k}(P_{k_1}\bm{\psi}(s) P_{k_2}[D_x\bm{\psi}(s)]) P_{k'}w^y \|_{L^2_{t,x}} \mathrm{d}s\notag\\ 
	 &\lesssim  \sum_{k_1,k_2 }\int_0^\infty \|P_{k_1} \bm{\psi}(s)\|_{L^\infty_{t,x}} \sup_y \|  P_{k_2} (D_x\bm{\psi}(s))  P_{k'}w^y \|_{L^2_{t,x}} \mathrm{d}s \notag\\
	 &\lesssim \sum_{k_1,k_2 }2^{k_1 + k_2} 2^{-\frac{|k_2-k'|}{2}}\mathcal{D}(P_{k'}w)\notag\\
	 &\quad\quad\int_0^\infty(2^{2k_2} s)^{-\frac{1}{4}} \langle 2^{2k_1} s \rangle^{-3}\langle 2^{2k_2} s \rangle^{-3}2^{-\sigma\max\{k_1,k_2\}}b_{\min\{k_1,k_2\}}b_{\max\{k_1,k_2\}}(\sigma)\mathrm{d}s \notag\\
	 &\lesssim 2^{-\sigma k }\mathcal{D}(P_{k'}w) \sum_{k_1,k_2 } 2^{-\frac{|k_1-k_2|}{2}} 2^{-\frac{|k_2-k'|}{2}}  b_{\min\{k_1,k_2\}}b_{\max\{k_1,k_2\}}(\sigma)\notag\\
	 &\lesssim 2^{-\sigma k }\mathcal{D}(P_{k'}w) \sum_{k_3\le  k}2^{-\frac{|k_3-k|}{2}}\max\{2^{-\frac{|k-k'|}{2}},2^{-\frac{|k_3-k'|}{2}}\}b_{k_3}b_k(\sigma)\notag\\
	 &\quad \quad + 2^{-\sigma k }\mathcal{D}(P_{k'}w) \sum_{|k_3-k'|\ge |k-k'|} 2^{-\frac{|k_3-k'|}{2}} b_{k_3}b_{k_3}(\sigma)\notag\\
	 &\lesssim  \varepsilon^{\frac{1}{2}}2^{-\sigma k }2^{-\frac{|k-k'|}{2}}\mathcal{D}(P_{k'}w) b_{k}(\sigma),
	\end{align}

	while for the High-High interactions with  $|k_1-k_2|\le  4, k_1\ge k-4$ in the regime $|k_2-k'|\ge  |k-k'|$, we note that  since $k'\ge k$, we have $k_2\ge \frac{k+ k'}{2}$.  Placing the low-frequency term in  $L^2_t L^\infty_x$, we have 
	\begin{align}
	 & \sum_{k_1,k_2 }\int_0^\infty \|P_{k}(P_{k_1}\bm{\psi}(s) P_{k_2}[D_x\bm{\psi}(s)]) P_{k'}w^y \|_{L^2_{t,x}} \mathrm{d}s\notag\\ 
	 &\lesssim  \sum_{k_1,k_2 }\int_0^\infty \| P_{k'}w \|_{L^\infty_{t }L^2_x}   \| P_{k}(P_{k_1}\bm{\psi}(s) P_{k_2}[D_x\bm{\psi}(s)])\|_{L^2_{t}L^\infty_x} \mathrm{d}s \notag\\
	 &\lesssim \sum_{k_1,k_2 }2^{k} \|P_{k_1}\bm{\psi}(s) P_{k_2}[D_x\bm{\psi}(s)]\|_{L^2_{t,x}}  \mathcal{D}(P_{k'}w)\notag\\
	 &\lesssim \sum_{k_3\ge \frac{1}{2}(k+k')}2^k  \mathcal{D}(P_{k'}w)  \int_0^\infty(2^{2k_3} s)^{-\frac{1}{4}}  \langle 2^{2k_3} s \rangle^{-6}2^{-\sigma k_3}b_{k_3}b_{k_3}(\sigma)\mathrm{d}s \notag\\
	 &\lesssim 2^{-\sigma k }\mathcal{D}(P_{k'}w) \sum_{k_3\ge \frac{1}{2}(k+k')}2^{k-k_3}b_{k_3}b_{k_3}(\sigma)\notag\\
	 &\lesssim  \varepsilon^{\frac{1}{2}}2^{-\sigma k }2^{-\frac{|k-k'|}{2}}\mathcal{D}(P_{k'}w) b_{k}(\sigma),
	\end{align}
Combing the above subcases yields the desired bound \eqref{Apsi-1} when $k'\ge k$.

If $k'\le k$, for the  Low-High interaction with    $k_1 \le k-4$, $|k_2-k|\le 8$ , we have
\begin{align}
	 & \sum^{|k_2-k|\le 8}_{k_1\le k-4}\int_0^\infty \sup_y\|P_{k}(P_{k_1}\bm{\psi}(s) P_{k_2}[D_x\bm{\psi}(s)]) P_{k'}w^y \|_{L^2_{t,x}} \notag\\
	 &\lesssim    \sum^{|k_2-k|\le 8}_{k_1\le k-4}\int_0^\infty \|P_{k_1} \bm{\psi}(s)\|_{L^\infty_{t,x}} \sup_y \|  P_{k_2} (D_x\bm{\psi}(s))  P_{k'}w^y \|_{L^2_{t,x}} \mathrm{d}s \notag\\
	 &\lesssim \sum^{|k_2-k|\le 8}_{k_1\le  k-4}2^{k_1+k_2}2^{-\frac{|k_2-k'|}{2}} 2^{-\sigma k_2 }b_{k_1}b_{k_2}(\sigma)\mathcal{D}(P_{k'}w)\int_0^\infty(2^{2k_2} s)^{-\frac{1}{4}} \langle 2^{2k_1} s \rangle^{-3}\langle 2^{2k_2} s \rangle^{-3} \mathrm{d}s \notag\\
	 &\lesssim  2^{-\sigma k  }2^{-\frac{|k-k'|}{2}}b_{k }(\sigma)\mathcal{D}(P_{k'}w) \sum_{k_1\le 4} 2^{-\frac{|k_1-k|}{2}} b_{k_1}\lesssim \varepsilon^{\frac{1}{2}}2^{-\sigma k  }2^{-\frac{|k-k'|}{2}} b_{k }(\sigma)\mathcal{D}(P_{k'}w).\notag
\end{align}

For the High-Low and High-High interactions, we place the high-frequency term in $L_{t,x}^2$ and use the pointwise bound for $P_{k'}w$ to obtain
	\begin{align}
	 & \sum_{k_1\ge  k-4}^{k_2}\int_0^\infty \sup_y\|P_{k}(P_{k_1}\bm{\psi}(s) P_{k_2}[D_x\bm{\psi}(s)]) P_{k'}w^y \|_{L^2_{t,x}} \mathrm{d}s\notag\\ 
	 &\lesssim  \sum_{k_1\ge  k-4}^{k_2}\int_0^\infty \|P_{k'}w\|_{L^\infty_{t,x}}  \| P_{k_1} \bm{\psi}(s) P_{k_2} (D_x\bm{\psi}(s))   \|_{L^2_{t,x}} \mathrm{d}s \notag\\
	 &\lesssim \sum_{k_1\ge  k-4}^{k_2} 2^{k' + k_2} \mathcal{D}(P_{k'}w)\notag\\
	 &\quad\quad\int_0^\infty(2^{2k_2} s)^{-\frac{1}{4}} \langle 2^{2k_1} s \rangle^{-3}\langle 2^{2k_2} s \rangle^{-3}2^{-\sigma\max\{k_1,k_2\}}b_{\min\{k_1,k_2\}}b_{\max\{k_1,k_2\}}(\sigma)\mathrm{d}s \notag\\
	 &\lesssim 2^{-\sigma k }2^{-\frac{k-k'}{2}}\mathcal{D}(P_{k'}w) \sum_{k_1\ge  k-4}^{k_2}2^{-\frac{|k_1-k_2|}{2}} 2^{-\frac{|k_1-k|}{2}}  b_{\min\{k_1,k_2\}}b_{\max\{k_1,k_2\}}(\sigma),\notag\\
	 &\lesssim  \varepsilon^{\frac{1}{2}} 2^{-\sigma k }2^{-\frac{|k-k'|}{2}}\mathcal{D}(P_{k'}w)b_{k}(\sigma),
	\end{align}
	where we use the fact that $|k_2-\min\{k_1,k_2\}|\le 10$ in the High-Low and High-High regime. Combining both subcases gives the bound \eqref{Apsi-1} when $k' \le k$.

We next prove \eqref{A2psi}. We focus on the representative term $P_{k }w^y P_{k}(\bm{A}\cdot \bm{A} \cdot \bm{\psi})$ in \eqref{A2psi}, as estimates for other terms are similar. Applying Bony's paraproduct decomposition, we obtain
\begin{align}
	&\sup_{y}  \|P_{k }w^y P_{k} (\bm{A}\cdot\bm{A}\cdot \bm{\psi})     \|_{L^1_{t,x}}\lesssim \sum_{k_i}\sup_{y}  \|P_{k }w^y P_{k} (P_{k_1}\bm{A}\cdot P_{k_2}\bm{A}\cdot P_{k_3}\bm{\psi})     \|_{L^1_{t,x}}\notag\\
	&\lesssim \sum_{k_i}\min \{2^{-\frac{1}{2}(|k-k_1|+ |k_2-k_3|) },2^{-\frac{1}{2}(|k-k_2|+ |k_1-k_3|)}\}2^{-\sigma k_{\max}}b_{k_{\min}}b_{k_{\text{med}}}b_{k_{\max}}(\sigma)\mathcal{D}(P_k w)\notag\\
	&\lesssim 2^{-\sigma k }\mathcal{D}(P_k w) \sum_{k_i} 2^{-\frac{1}{8}(|k-k_1|+ |k_1-k_2| + |k_2-k_3|)}b_{k_{\min}}b_{k_{\text{med}}}b_{k_{\max}}(\sigma)\notag\\
	&\lesssim \varepsilon 2^{-\sigma k }\mathcal{D}(P_k w) b_k(\sigma),
\end{align}
where  $(k_{\min},k_{\text{med}},k_{\max})$ is the increasing rearrangement of $(k_1,k_2,k_3)$ and the off-diagonal factor comes from applying \eqref{Apsi-1} to the pairs $(P_k w^y, P_{k_1}\bm{A})$ and $(P_{k_2}\bm{A}, P_{k_3}\bm{\psi})$ in the two possible bilinear groupings.

Finally, we prove  \eqref{Atpsi}. Using the identity $A_t = -\int_{s}^{\infty}\Im(\overline{\psi_t}D_l \psi_l )(r)\mathrm{d}r$, we need to distinguish between two regularity regimes for $\psi_t$, as given by \eqref{psitk}. 

For $\sigma\le  \frac{1}{5}$, we shall use the bound $\|P_{k}\psi_t(s)\|\lesssim \varepsilon^{\frac{1}{2}}2^{k}\langle 2^{2k}s \rangle^{-4}$ to obtain 
\begin{align}
	&\|P_{k }w^y P_{k}(A_t \bm{\psi}) \|_{L^1_{t,x}} \lesssim \sum_{k_i} \int_0^{+\infty}\|P_{k }w^y P_{k}(P_{k_1}\psi_t(s) P_{k_2}(D_x \bm{\psi}(s)) P_{k_3}\bm{\psi})  \|_{L^1_{t,x}} \mathrm{d}s\notag\\
    &\lesssim \sum_{k_i}2^{k_1 + k_2} \min\{2^{-\frac{|k_2-k_3|}{2}},2^{-\frac{|k-k_2|}{2}}\} 2^{-\sigma k_3}\varepsilon^{\frac{1}{2}}b_{k_2}b_{k_3}(\sigma)\mathcal{D}(P_k w)\notag\\
	&\qquad\qquad    \int_0^\infty  ( 2^{2k_2}s  )^{-\frac{1}{4}} \langle 2^{2k_1}s \rangle^{-3}\langle 2^{2k_2}s \rangle^{-3}  \mathrm{d}s\notag \\
	&\lesssim \varepsilon^{\frac{1}{2}} 2^{-\sigma k} b_{k}(\sigma) \mathcal{D}(P_k w) \sum_{k_i} 2^{-\frac{|k_1-k_2| }{2}} 2^{(\sigma+\delta)|k-k_3|}\min\{2^{-\frac{|k_2-k_3|}{2}},2^{-\frac{|k-k_2|}{2}}\}b_{k_2}\notag\\
	&\lesssim \varepsilon  2^{-\sigma k} b_{k}(\sigma)\mathcal{D}(P_k w),
\end{align}
where the factor $\min\{2^{-\frac{|k_2-k_3|}{2}},2^{-\frac{|k-k_2|}{2}}\}$ comes from applying \eqref{psis-main-2} either to the pair $(P_k w^y, P_{k_2}(D_x \bm{\psi}))$ or to the pair $(P_{k_3} \bm{\psi}, P_{k_2}(D_x \bm{\psi}))$, whichever yields better decay.

For $\sigma\ge  \frac{1}{5}$, we shall use the bound $\|P_{k}\psi_t(s)\|\lesssim  2^{-(\sigma-1)k}\langle 2^{2k}s \rangle^{-4} b_k(\sigma)$ to obtain
\begin{align}
	&\|P_{k }w^y P_{k}(A_t \bm{\psi}) \|_{L^1_{t,x}} \lesssim \sum_{k_i} \int_0^{+\infty}\|P_{k }w^y P_{k}(P_{k_1}\psi_t(s) P_{k_2}(D_x \bm{\psi}(s)) P_{k_3}\bm{\psi})  \|_{L^1_{t,x}} \mathrm{d}s\notag\\
    &\lesssim \sum_{k_i}2^{k_1 + k_2} \min\{2^{-\frac{|k_2-k_3|}{2}},2^{-\frac{|k-k_2|}{2}}\} 2^{-\sigma k_{\max}} b_{k_{\min}}b_{k_{\text{med}}}b_{k_{\max}}(\sigma)\mathcal{D}(P_k w)\notag\\
	&\qquad\qquad    \int_0^\infty  ( 2^{2k_2}s  )^{-\frac{1}{4}} \langle 2^{2k_1}s \rangle^{-3}\langle 2^{2k_2}s \rangle^{-3}  \mathrm{d}s\notag \\
	&\lesssim  2^{-\sigma k} \mathcal{D}(P_k w) \sum_{k_i} 2^{-|k_1-k_2|} \min\{2^{-\frac{|k_2-k_3|}{2}},2^{-\frac{|k-k_2|}{2}}\} b_{k_{\min}}b_{k_{\text{med}}}b_{k_{\max}}(\sigma)\notag\\
	&\lesssim \varepsilon  2^{-\sigma k} b_{k}(\sigma)\mathcal{D}(P_k w).
\end{align}
This completes the proof of  \eqref{Atpsi}.

The estimate \eqref{N2} for $\mathcal{N}_{2,m}$ follows directly from \eqref{Atpsi} and \eqref{A2psi}, combined with the fact that the remaining term $P_k w^y P_k(\psi_l   A_m R_l R_i A_j)$ can be controlled in a similar manner to \eqref{A2psi}, since 
\begin{equation}
  P_k w^y P_k(\psi_l   A_m R_l R_i A_j) = \sum_{k_i}P_k w^y L(P_{k_1}\bm{\psi},   P_{k_2}\bm{A},  P_{k_3} \bm{A})
\end{equation}  
and \eqref{Apsi-1} applies well to the $L$-notation product terms. This completes the proof.
\end{proof}

\begin{lemma}
	Under the bootstrap assumptions, the derivative part of the nonlinearity satisfies
\begin{equation}\label{N1}
  \sup_{y}\|P_{k}w^y P_k \mathcal{N}_{1,m}\|\lesssim \varepsilon 2^{-\sigma k }b_k(\sigma)\mathcal{D}(P_k w)
\end{equation}
\end{lemma}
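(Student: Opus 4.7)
The plan is to estimate each of the three pieces of
\[
\mathcal{N}_{1,m}=-i\mu_l A_l\pa_l\psi_m - i\mu_l\pa_l(A_l\psi_m)+i\kappa\epsilon_{ij}\pa_m(\psi_l R_l R_iA_j)
\]
by a common scheme: redistribute all outer derivatives onto the $P_k w^y$ factor via integration by parts, substitute the heat-flow integral representation \eqref{Aid} for every occurrence of $\bm A$, and apply the refined heat-time bilinear estimates \eqref{psis-main} and \eqref{psis-main-2} after Bony paraproduct decomposition of the resulting multilinear expressions in $\bm\psi$.

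First I would rewrite $A_l\pa_l\psi_m = \pa_l(A_l\psi_m)-(\pa_lA_l)\psi_m$ so that the first two summands of $\mathcal{N}_{1,m}$ combine into $-2i\mu_l\pa_l(A_l\psi_m)+i\mu_l(\pa_lA_l)\psi_m$. The $(\pa_lA_l)\psi_m$ piece has no outer derivative and can be controlled by the same trilinear argument used in \eqref{A2psi} (using $\mathcal{D}(\pa_l P_{k_1}\bm A)\lesssim 2^{k_1}\mathcal{D}(P_{k_1}\bm A)$ together with \eqref{Apsi-1}). For the remaining $\pa_l(A_l\psi_m)$ and $\pa_m(\psi_l R_l R_iA_j)$ pieces I would integrate by parts against $P_k w^y$ to transfer the outer derivative, producing $\pa_l P_k w^y$ or $\pa_m P_k w^y$ for which $\mathcal{D}(\pa_\alpha P_k w)\lesssim 2^k\mathcal{D}(P_k w)$.

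Next, I would substitute the heat-flow integral $\bm A(0) = -\int_0^{+\infty}\Im(\overline{\bm\psi(s)}\,D_l\psi_l(s))\,\mathrm{d}s$ into every occurrence of $\bm A$, noting that the Riesz multipliers in the third term are $L^p$-bounded and hence harmless inside the $L$-notation. Each resulting expression then takes the schematic form
\[
\int_0^{+\infty}\!\!\!\int \pa^{\alpha}P_k w^y\cdot P_k L\bigl(P_{k_1}\bm\psi(s),\,P_{k_2}D_x\bm\psi(s),\,P_{k_3}D_x^{\beta}\bm\psi\bigr)\,\mathrm{d}t\,\mathrm{d}x\,\mathrm{d}s,
\]
with $|\alpha|,|\beta|\le 1$. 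I would paraproduct-decompose and pair the four frequency-localized factors into two bilinear $L^2_{t,x}$ brackets of the shape
\[
\bigl\|\pa^{\alpha}P_k w^y\cdot P_{k_i}\bm\psi(s)\bigr\|_{L^2_{t,x}}\cdot \bigl\|P_{k_j}D_x\bm\psi(s)\cdot P_{k_3}D_x^{\beta}\bm\psi\bigr\|_{L^2_{t,x}},
\]
each controlled by \eqref{psis-main} or \eqref{psis-main-2}. Summing over $k_1,k_2,k_3$ by the off-diagonal factors $2^{-|k-k_i|/2}$ and the slow variation of $b_k(\sigma)$, and invoking the smallness $\sum_k b_k^2\lesssim\varepsilon$, yields the desired $\varepsilon\,2^{-\sigma k}b_k(\sigma)\mathcal{D}(P_k w)$.

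The hard part is the Low-High paraproduct regime in which $\bm A$ sits at very low frequency while $\pa_l\psi_m$ is at frequency $\sim 2^k$: neither integration by parts nor Bernstein embedding on its own can absorb the $2^k$ loss from the derivative. The resolution is exactly the heat-flow expansion of $\bm A$: once $\bm A$ has been replaced by an $s$-integral of $\bm\psi(s)\,D_x\bm\psi(s)$, the troublesome $2^k$ factor falls on the $D_x\bm\psi(s)$ factor and is compensated by the $(2^{2k_j}s)^{-1/4}$ decay in \eqref{psis-main-2}; the ensuing $s$-integral produces a gain of roughly $2^{-k_j}$ that cancels the derivative loss and closes the estimate.
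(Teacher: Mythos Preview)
Your overall strategy---substitute the heat-flow representation \eqref{Aid} for $\bm A$, paraproduct-decompose, and pair the resulting four frequency-localized factors into two $L^2_{t,x}$ bilinear forms via \eqref{psis-main}--\eqref{psis-main-2}---is exactly the paper's. Two steps in your execution do not work as written.

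First, the claim that $(\partial_l A_l)\psi_m$ ``can be controlled by the same trilinear argument used in \eqref{A2psi}'' fails. The proof of \eqref{A2psi} pairs four factors $P_k w,\,P_{k_1}\bm A,\,P_{k_2}\bm A,\,P_{k_3}\bm\psi$ into two $L^2_{t,x}$ bilinear forms via \eqref{Apsi-1}. Here $P_kw\cdot P_{k_1}(\partial\bm A)\cdot P_{k_3}\bm\psi$ has only three factors, and after one bilinear pairing the leftover single factor lies only in $G=L^\infty_tL^2_x\cap L^4_{t,x}$, never in $L^2_{t,x}$, so no $L^1_{t,x}$ estimate results. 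This piece must also be heat-expanded to quadrilinear form, exactly like the others; your preliminary rewriting therefore gains nothing, and the paper simply treats $A_l\partial_l\psi_m$ and $\partial_l(A_l\psi_m)$ together, recording the derivative cost as a single factor $2^{\max\{k,k_3\}}$.

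Second, your account of the ``hard part'' misidentifies the cancellation mechanism. In the regime where $\bm A$ sits at low inner heat frequencies $\max\{k_1,k_2\}\ll k$, the $s$-integral produces only a gain of order $2^{-\max\{k_1,k_2\}}$, which does \emph{not} cancel the outer $2^k$. What actually closes the estimate is the combination of that gain with the bilinear off-diagonal decay $2^{-\frac12(|k-k_1|+|k_2-k_3|)}$ \emph{and} the frequency constraint from the outer $P_k$, which forces $|k_3-k|\le 10$ in this regime; together these yield the summable factor $2^{-|k_1-k_2|/2}$, as the paper extracts via its explicit case analysis on $C(k,k_1,k_2,k_3)$. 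The phrase ``summing by the off-diagonal factors $2^{-|k-k_i|/2}$'' does not demonstrate this three-way cancellation and would not close the frequency sum as stated.
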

\begin{proof}
	We apply Bony's paraproduct decomposition to the terms in $\mathcal{N}_{1,m}$
  \begin{align}
	 &\sup_{y}\|P_{k}w^y P_k \mathcal{N}_{1,m}\|_{L^1_{t,x}}\notag\\
	 &\lesssim \sum_{k',k_3}\int_{0}^\infty\sup_{y}\|P_{k}w^y \cdot P_k(\pa_x L(P_{k'}A P_{k_3} \bm{\psi}), L(P_{k'}A \pa_x P_{k_3} \bm{\psi}))\|_{L^1_{t,x}}\mathrm{d}s\notag\\
	 &\lesssim\sum_{k_i}2^{\max\{k,k_3\}}\int_{0}^\infty\sup_{y,y'}\|P_{k}w^y P_{k_1}\bm{\psi}(s)P_{k_2}D_x \bm{\psi} P_{k_3} \bm{\psi}^{y'}  \| _{L^1_{t,x}}\notag\\
	 &\lesssim \sum_{k_i} 2^{\max\{k,k_3\}+ k_2}\min\{2^{-\frac{1}{2}(|k-k_1|+|k_2-k_3|)},2^{-\frac{1}{2}(|k-k_2|+|k_1-k_3|)}\}\notag\\
	 &\quad\cdot \int_0^\infty (2^{2k_2}s)^{-\frac{1}{4}} \langle 2^{2k_1} s \rangle^{-3}\langle 2^{2k_2} s \rangle^{-3}\mathrm{d}s\cdot 2^{-\sigma k_{\max}} b_{k_{\min}}b_{k_{\text{med}}}b_{k_{\max}}(\sigma)\mathcal{D}(P_k w)\notag\\
	 &\lesssim \sum_{k_i} 2^{\max\{k,k_3\}+ \frac{1}{2}k_2-\frac{3}{2}\max\{k_1,k_2\}}\min\{2^{-\frac{1}{2}(|k-k_1|+|k_2-k_3|)},2^{-\frac{1}{2}(|k-k_2|+|k_1-k_3|)}\}\notag\\
	 &\quad\     b_{k_{\min}}b_{k_{\text{med}}}b_{k_{\max}}(\sigma)\mathcal{D}(P_k w)\notag\\
	 &\lesssim 2^{-\sigma k}\mathcal{D}(P_k w)\sum_{k_i}C(k,k_1,k_2,k_3)b_{k_{\min}}b_{k_{\text{med}}}b_{k_{\max}}(\sigma),\label{DApsi-1}
  \end{align}
  where $(k_{\min},k_{\text{med}},k_{\max})$ is the increasing rearrangement of  $(k_1,k_2,k_3)$, and the factor $C(k,k_1,k_2,k_3)$ is defined as 
  \begin{equation}
	\begin{aligned}
	&C(k,k_1,k_2,k_3) : = 1_{\{|k_{\max}-k|\le  10\}\cup \{k_{\max}\ge  k + 10,\ |k_{\max}-k_{\text{med}}|\le  10 \}}\\
	&2^{\max\{k,k_3\}-\max\{k_1,k_2\}}\min\{2^{-\frac{1}{2}(|k-k_1|+|k_2-k_3|)},2^{-\frac{1}{2}(|k-k_2|+|k_1-k_3|)}\} . 
	\end{aligned}
  \end{equation}
The indicator function restricts to the two main frequency interaction scenarios: either the highest frequency is close to $k$, or it is significantly larger than $k$ but close to the second highest.

To bound the sum in \eqref{DApsi-1}, we consider two subcases based on the relationship between $\max\{k,k_3\}$ and $\max\{k_1,k_2\}$.

If $\max\{k,k_3\}\le  \max\{k_1,k_2\}+10$, then 
\begin{align}
	&C(k,k_1,k_2,k_3)\lesssim \min\{2^{-\frac{1}{2}(|k-k_1|+|k_2-k_3|)},2^{-\frac{1}{2}(|k-k_2|+|k_1-k_3|)}\}\notag \\
	&\lesssim 2^{-\frac{1}{4}(|k-k_1|+|k_2-k_3|+|k-k_2|+|k_1-k_3|)}\notag\\
	&\lesssim 2^{-\frac{|\Delta k|}{8}},\ \Delta k: = \max\{k,k_1,k_2,k_3\}- \min\{k,k_1,k_2,k_3\}, 
\end{align}
  which provides strong off-diagonal decay.

  If $\max\{k,k_3\}\ge  \max\{k_1,k_2\}+10$, then the factor vanishes unless $k_{\max} = k_3$ and $|k_3-k|\le 10$. In this scenario we have $\max\{k_1,k_2\}\le  k +10$ and 
  \begin{align}
	&C(k,k_1,k_2,k_3)\lesssim 2^{k-\max\{k_1,k_2\}}\cdot 2^{-\frac{1}{2}(|k-k_1|+ |k-k_2|)}\\
	&\lesssim 2^{k-\max\{k_1,k_2\} }\cdot 2^{\frac{1}{2}(k_1+k_2-2k)}\lesssim 2^{-\frac{1}{2}|k_1-k_2|}.
  \end{align}
  The gain comes from the mismatch between $k$ and $\max\{k_1,k_2\}$.

Combining both subcases, we estimate 
\begin{align}
	\eqref{DApsi-1}&\lesssim 2^{-\sigma k }\mathcal{D}(P_k w) \sum_{\max\{k,k_3\}\le  \max\{k_1,k_2\}}2^{-\frac{|\Delta k|}{8}} b_{k_{\min}}b_{k_{\text{med}}}b_{k_{\max}}(\sigma) \notag\\
	&\quad +2^{-\sigma k }\mathcal{D}(P_k w) \sum_{k_1,k_2\le k+10}2^{-\frac{1}{2}|k_1-k_2|}b_{k_1}b_{k_2}b_k(\sigma)  \notag\\
	&\lesssim \varepsilon 2^{-\sigma k }\mathcal{D}(P_k w)b_k(\sigma),
\end{align} 
which gives \eqref{N2} and thus completes the proof.
\end{proof}
\begin{proposition}
	Under bootstrap assumptions in Proposition \ref{SchrodingerBA}, we have 
	\begin{equation}
	  b_k(\sigma) +  \mathcal{D}(P_{k}\bm{\psi})\lesssim c_k(\sigma),\ \|P_k \psi_t\|_G \lesssim  2^k \varepsilon.
	\end{equation}
\end{proposition}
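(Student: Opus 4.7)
The plan is to apply the Strichartz estimate \eqref{Stri} to each frequency-localized piece $(i\partial_t + \mu_l \partial_l^2) P_k \psi_m = P_k \mathcal{N}_m$ and close the bootstrap using the nonlinearity bounds \eqref{N1}--\eqref{N2} just proved. Writing \eqref{Stri} in the form
\[
\|P_k \bm{\psi}\|_G \lesssim \|P_k \bm{\psi}|_{t=0}\|_{L^2_x} + \|P_k \mathcal{N}_m\|_{DV^2_{\text{UH}}} + I(P_k \bm{\psi}, P_k \mathcal{N}_m)^{1/2},
\]
one observes that the interaction piece $I$ is precisely the second ingredient of $\mathcal{D}(P_k \bm{\psi})$. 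Applying \eqref{N1} and \eqref{N2} with $w = \bm{\psi}$ and using the bootstrap hypothesis $\mathcal{D}(P_k \bm{\psi}) \le 2^{-\sigma k} b_k(\sigma)$ yields $I(P_k \bm{\psi}, P_k \mathcal{N}_m) \lesssim \varepsilon\, 2^{-2\sigma k} b_k(\sigma)^2$, hence $I^{1/2} \lesssim \varepsilon^{1/2} 2^{-\sigma k} b_k(\sigma)$.

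To control $\|P_k \mathcal{N}_m\|_{DV^2_{\text{UH}}}$ I would use the duality $(DV^2_{\text{UH}})^* = U^2_{\text{UH}}$: for any test function $v \in U^2_{\text{UH}}$ with $\|v\|_{U^2} \le 1$, the atomic decomposition shows that each atom is a piecewise free evolution, so by \eqref{Stri} one has $\mathcal{D}(P_k v) \lesssim 1$. Applying \eqref{N1}--\eqref{N2} now with $w = P_k v$ yields $|\int P_k \mathcal{N}_m \, \overline{P_k v}\,dt\,dx| \lesssim \varepsilon\, 2^{-\sigma k} b_k(\sigma)$, whence $\|P_k \mathcal{N}_m\|_{DV^2_{\text{UH}}} \lesssim \varepsilon\, 2^{-\sigma k} b_k(\sigma)$. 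The initial data contribution $\|P_k \bm{\psi}|_{t=0}\|_{L^2_x} \lesssim 2^{-\sigma k} c_k(\sigma)$ is obtained by decomposing $\psi_m|_{t=0} = (v + iw)\cdot \partial_m S_0$ at $s = 0$ via Bony paraproduct and invoking the frame bound \eqref{vwb}, exactly as in part (a) of the proof of Theorem \ref{thm1}. Combining these three pieces, $2^{\sigma k} \mathcal{D}(P_k \bm{\psi}) \lesssim c_k(\sigma) + \varepsilon^{1/2} b_k(\sigma)$; taking the envelope supremum defining $b_k(\sigma)$ and exploiting the slowly-varying property gives $b_k(\sigma) \lesssim c_k(\sigma) + \varepsilon^{1/2} b_k(\sigma)$, and absorbing the $\varepsilon^{1/2}$ term for sufficiently small $\varepsilon$ yields the first bound.

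For the bound on $\psi_t$ I would use the algebraic identity \eqref{psit} at $s = 0$, namely $\psi_t = i(\partial_1 \psi_1 - \partial_2 \psi_2) - (A_1 \psi_1 - A_2 \psi_2) + \kappa \epsilon_{ij} \psi_l R_l R_i A_j$. The linear piece gives $\|P_k (\partial_1 \psi_1 - \partial_2 \psi_2)\|_G \lesssim 2^k \|P_k \bm{\psi}\|_G \lesssim 2^k c_k \lesssim 2^k \varepsilon$ using the bound just proved and the $\varepsilon$-envelope property $\sum_k c_k^2 \le \varepsilon^2$. The quadratic pieces $P_k(\bm{A}\cdot \bm{\psi})$ and their Riesz-transformed versions are handled by Bony calculus using \eqref{A-bound} at $s = 0$ together with $\|P_k \bm{\psi}\|_G \lesssim 2^{-\sigma k} c_k(\sigma)$; summability contributes an additional $\varepsilon$ factor, producing the claimed $2^k \varepsilon$ bound.

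The main obstacle is the duality handling of $\|P_k \mathcal{N}_m\|_{DV^2_{\text{UH}}}$: one must check that the estimates \eqref{N1}--\eqref{N2}, stated and used above with $w = \bm{\psi}$, are in fact valid for any $w$ of finite $\mathcal{D}(P_k w)$, and that this class contains $U^2_{\text{UH}}$-atoms uniformly. This is precisely where the design of $\mathcal{D}$ to simultaneously carry the Strichartz norm and the bilinear interaction piece pays off, ensuring $\mathcal{D}(P_k v) \lesssim 1$ for every $U^2$-atom and closing the whole argument.
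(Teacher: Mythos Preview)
Your approach matches the paper's closely: apply the Strichartz estimate \eqref{Stri} to $P_k\psi_m$, control the interaction term $I(P_k\bm{\psi},P_k\mathcal{N}_m)$ via \eqref{N1}--\eqref{N2} with $w=\bm{\psi}$, handle $\|P_k\mathcal{N}_m\|_{DV^2_{\text{UH}}}$ by duality against $U^2_{\text{UH}}$-atoms, and then treat $\psi_t$ algebraically via \eqref{psit}.

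There is, however, a genuine gap in your absorption step. Recall from \eqref{bfre} that for $\sigma\ge\tfrac15$ the envelope $b_k(\sigma)$ contains not only $2^{\sigma j}\|P_j\bm{\psi}(0)\|_G$ but also $2^{(\sigma-1)j}\|P_j\psi_t(0)\|_G$. Thus after proving $2^{\sigma k}\mathcal{D}(P_k\bm{\psi})\lesssim c_k(\sigma)+\varepsilon^{1/2}b_k(\sigma)$ and taking the $\bm{\psi}$-envelope, you obtain only the $\bm{\psi}$-portion of $b_k(\sigma)$, not $b_k(\sigma)$ itself; the absorption $b_k(\sigma)\lesssim c_k(\sigma)+\varepsilon^{1/2}b_k(\sigma)$ is therefore unjustified for $\sigma\ge\tfrac15$. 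Your subsequent bound $\|P_k\psi_t\|_G\lesssim 2^k\varepsilon$ is too crude to close this: it is the $\sigma=0$ statement and cannot recover the missing $2^{(\sigma-1)k}c_k(\sigma)$ control needed in $b_k(\sigma)$.

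The paper resolves this in two ways. First, it never absorbs: it uses the bootstrap hypothesis $b_k(\sigma)\le\varepsilon^{-1/4}c_k(\sigma)$ directly, so that $\varepsilon^{1/2}b_k(\sigma)\le\varepsilon^{1/4}c_k(\sigma)$ and $\mathcal{D}(P_k\bm{\psi})\lesssim 2^{-\sigma k}c_k(\sigma)$ follows immediately. Second, it proves the refined estimate $\|P_k\psi_t\|_G\lesssim 2^{-(\sigma-1)k}c_k(\sigma)$ for $\sigma\in[\tfrac15,\sigma_1]$ (your proposal omits this), using the same Bony paraproduct as your $2^k\varepsilon$ bound but keeping track of the $\sigma$-weight via the heat-flow estimate for $\bm A$ at $s=0$. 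Only with both of these in hand does the definition \eqref{bfre} give $b_k(\sigma)\lesssim c_k(\sigma)$.
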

\begin{proof}
By the definition \eqref{bfre} of $b_k(\sigma)$, it suffices to prove the following estimates:
    \begin{align}
        &\mathcal{D}(P_k \bm{\psi}) \lesssim 2^{-\sigma k} c_k(\sigma),\ \sigma\in [0,\sigma_1]\label{goal1}\\
        &  \|P_k \psi_t\|_G \lesssim 2^{-(\sigma -1)k} c_k(\sigma),\ \sigma \in [\frac{1}{5},\sigma_1]. \label{goal2}\\
		&  \|P_k \psi_t\|_G \lesssim  2^k \varepsilon.\label{goal3}
    \end{align}

We first prove \eqref{goal1}. Applying the energy and Strichartz estimate \eqref{Stri} for the hyperbolic Schrödinger equation to the differentiated Ishimori equation \eqref{psi}, we obtain
 \begin{equation}
   \mathcal{D}(P_k \bm{\psi})\lesssim 2^{-\sigma k} c_k(\sigma) + \sum_{m=1,2}\|P_k\mathcal{N}_m\|_{DV^2_{\text{UH}}} + I(P_k \bm{\psi},P_k\mathcal{N}_m)^{\frac{1}{2}}.
 \end{equation}
The integral $I(P_k \bm{\psi},\mathcal{N}_m)^{\frac{1}{2}}$ is controlled directly by \eqref{N2} and \eqref{N1}. Using the bootstrap assumption $\mathcal{D}(P_k \bm{\psi}) \lesssim 2^{-\sigma k} b_k(\sigma)$ and $b_k(\sigma)\le  \varepsilon^{-\frac{1}{4}}c_k(\sigma)$, we obtain
\begin{equation}
  I(P_k \bm{\psi},P_k\mathcal{N}_m)^{\frac{1}{2}} \lesssim \mathcal{D}(P_k \bm{\psi})^{\frac{1}{2}}(\varepsilon2^{-\sigma k }b_k(\sigma))^{\frac{1}{2}}\lesssim \varepsilon^{\frac{1}{2}}2^{-\sigma k }b_k(\sigma)\lesssim 2^{-\sigma k }c_k(\sigma).
\end{equation}

The norm $\|P_k \mathcal{N}_m\|_{DV^2_{\text{UH}}}$ is estimated by duality. For any $U^2_{\text{UH}}$ atom $w$  (which implies $\mathcal{D}(P_k w) \lesssim 1$), we have
\begin{equation}
  \big|\int  \bar{w} P_k\  \mathcal{N}_m\mathrm{d}t\mathrm{d}x\big|\lesssim \|P_{\sim  k} w  P_k\mathcal{N}_m \|_{L^1_{t,x}}\lesssim \varepsilon 2^{-\sigma k }b_{k}(\sigma)\lesssim 2^{-\sigma k }c_k(\sigma). 
\end{equation}

It remains to prove \eqref{goal2} and \eqref{goal3} for $\psi_t$. Using the  equation \eqref{psit}, we have 
\begin{equation}
   \psi_t = \pa_x \bm{\psi} + \bm{A} \cdot \bm{\psi} + (R^2 \bm{A} )\cdot \bm{\psi}, 
\end{equation} 
where $R$ is the Riesz operators. The term $\partial_x \bm{\psi}$ is already controlled by the estimate for $\bm{\psi}$. The remaining terms involve products of the connection coefficient $\bm{A}$ and the field $\bm{\psi}$. To estimate these products, we use the following bound for $\bm{A}$, obtained by sending $s \to 0$ in the heat flow estimate \eqref{Ab} and using the previously established fact that $a_k(\sigma) \lesssim b_k(\sigma) \lesssim \varepsilon^{\frac{1}{2}} $:
    \begin{equation}\label{Aest}
        \|P_k \bm{A}\|_G \lesssim \varepsilon^{\frac{1}{2}} 2^{-\sigma k} b_k(\sigma) \lesssim \varepsilon^{\frac{1}{2}} 2^{-\sigma k} b_k(\sigma).
    \end{equation}

Applying Lemma~\ref{Bony} to the product terms, combined with~\eqref{Aest} and the bootstrap bound for $\bm{\psi}$, we find for $\sigma \in [\tfrac15,\sigma_1]$ that 
    \begin{align}
        &\|P_k (\bm{A} \cdot \bm{\psi}, (R^2 \bm{A} )\cdot \bm{\psi})\|_G  \lesssim \sum_{k'\le k}  2^{k'-\sigma k}\varepsilon^{\frac{1}{2}}b_{k'}b_k(\sigma) +\sum_{k'\ge k} 2^{k-\sigma k'} \varepsilon^{\frac{1}{2}} b_{k'}b_{k'}(\sigma)\notag\\
		&\lesssim 2^{-(\sigma -1) k }c_k(\sigma)\Big(1 + \sum_{k'\ge k}2^{\sigma(k-k')}2^{\delta(k'-k)}\Big)\lesssim 2^{-(\sigma -1) k }c_k(\sigma).
    \end{align} 
Similarly,
    \begin{align}
        &\|P_k (\bm{A} \cdot \bm{\psi}, (R^2 \bm{A} )\cdot \bm{\psi})\|_G  \lesssim \sum_{k'\le k}  2^{k' }\varepsilon^{\frac{1}{2}}b_{k'}b_k  +\sum_{k'\ge k} 2^{k } \varepsilon^{\frac{1}{2}} b_{k'}^2\lesssim 2^k \varepsilon.
    \end{align} 
These estimates control the remaining terms and complete the proof.
\end{proof}
\makeatletter
{\renewcommand{\addcontentsline}[3]{}
\section*{Acknowledgements}} 
\makeatother
  Y. Zhou was supported by NSFC
(No.\,12571231, No.\,12171097). 
\vspace{0.6cm}

\end{document}